\def\fff{{}\kern-0.1em{f}}
\def\GammaG{\Phi}
\def\Gal{\mathrm{Gal}}
\def\RCl{\mathrm{RCl}}
\def\p{\mathfrak{p}}
\def\pbar{\overline{\p}}
\def\alphabar{\overline{\alpha}}
\def\y{\hat{x}}
\def\Tor{\mathrm{Tor}}
\def\Sp{\mathrm{Sp}}
\def\q{\mathfrak{q}}
\def\Hc{H^{\mathrm{cont}}}
\def\Hs{H^{\mathrm{stab}}}
\def\Qbar{\overline{\Q}}
\def\Cl{\mathrm{Cl}}
\def\Yoon{Y_{\infty}}
\def\Ymn{Y_{m}}
\def\EE{\mathbf{E}}
\def\R{\mathbf{R}}
\def\OL{\mathscr{O}}
\def\LL{M}
\def\MM{\mathscr{L}}
\def\Hom{\mathrm{Hom}}
\def\Spec{\mathrm{Spec}}
\def\Z{\mathbf{Z}}
\def\Q{\mathbf{Q}}
\def\F{\mathbf{F}}
\def\C{\mathbf{C}}
\def\EA{E\kern-0.1em{A}}
\def\EB{E\kern-0.1em{B}}
\def\EC{E\kern-0.1em{C}}
\def\GL{\mathrm{GL}}
\def\BG{\mathrm{B}\Gamma}
\def\BGL{\mathrm{BGL}}
\def\BSL{\mathrm{BSL}}
\def\SL{\mathrm{SL}}
\def\PSL{\mathrm{PSL}}
\def\SU{\mathrm{SU}}
\def\Pic{\mathrm{Pic}}
\def\Htw{\widetilde{H}}
\def\Ktw{\widetilde{K}}
\def\SKtw{\widetilde{SK}}
\newtheorem{theorem}{Theorem}[section]
\newtheorem{df}[theorem]{Definition}
\newtheorem{conj}[theorem]{Conjecture}
\newtheorem{lemma}[theorem]{Lemma}
\newtheorem{sublemma}[theorem]{Sublemma}
\newtheorem{prop}[theorem]{Proposition}
\newtheorem{corr}[theorem]{Corollary}
\newtheorem{example}[theorem]{Example}
\newtheorem{remark}[theorem]{Remark}
\begin{document}

\title{The stable homology of congruence subgroups} 
\author{Frank Calegari}
\thanks{The author was supported in part by NSF Career Grant DMS-0846285
and NSF Grant DMS-1404620.}
\subjclass[2010]{11F80, 11F75, 19F99.}
\maketitle

\subsection{Introduction}

Let $F$ be a number field, and let $\Gamma_N = \SL_N(\OL_F)$. For an integer $M$, let $\Gamma_N(M)$ denote the principal congruence subgroup of level $M$, that is, the kernel of the mod-$M$
reduction map~$\Gamma_N \rightarrow \SL_N(\OL_F/M)$. The cohomology and homology groups of $\Gamma_N$ in any fixed degree are well known to be stable as $N \rightarrow \infty$~\cite{CharneyStable}. However, the cohomology  and homology groups
of $\Gamma_N(M)$  do
not stabilize as  $N \rightarrow \infty$ for trivial reasons (stability already fails for $H_1(\Gamma_N(p),\F_p)$).
Given the important role that the cohomology of congruence subgroups plays in the
Langlands programme~\cite{Ash,Scholze,CG}, it is of interest to see 
whether the failure of stability can
be repaired in some way.
Various alternatives have been suggested,
including the notion of \emph{representation stability} by Church and Farb~\cite{CF} (see also~\cite{CEF,CFN}, and~\cite{Putman}). The starting point for representation stability of arithmetic groups is the observation that
the group
$H_1(\Gamma_N(p),\F_p)$ for $F = \Q$ may be identified with the adjoint representation of 
$\SL_N(\F_p)$  
for all $N > 2$, and that this description  is, in some sense, independent of $N$. For example, one may  ask whether $H_d(\Gamma_N(p),\F_p)$ admits a similar such description 
for all sufficiently large~$N$ (\cite{CF}, Conjecture~8.2.1).
In contrast, the approach of~\cite{CEA}
is to instead consider the  \emph{completed} homology and cohomology groups~\cite{CEB}:
$$\Htw_*(\SL_N,\F_p):= \lim_{\leftarrow} H_*(\Gamma_N(p^r),\F_p)  \qquad \Htw^*(\SL_N,\F_p) :=\lim_{\rightarrow} H^*(\Gamma_N(p^r),\F_p),$$
$$\Htw_*(\SL_N,\Z_p):= \lim_{\leftarrow} H_*(\Gamma_N(p^r),\Z_p)  \qquad \Htw^*(\SL_N,\Q_p/\Z_p) :=\lim_{\rightarrow} H^*(\Gamma_N(p^r),\Q_p/\Z_p).$$
There is a natural duality isomorphism $\Htw^*(\SL_N,\F_p) = \Hom(\Htw_*(\SL_N,\F_p),\F_p)$ from the universal coefficient theorem, and a
 corresponding isomorphism 
 $$\Htw^*(\SL_N,\Q_p/\Z_p) =  \Hom(\Htw_*(\SL_N,\Z_p),\Q_p/\Z_p).$$
One has the following  result from~\cite{CEA} (the generalization to number fields is immediate):

\begin{theorem} \label{theorem:MF} 
The modules $\Htw_{d}(\SL_N,\Z_p)$ stabilize as $N \rightarrow \infty$; the corresponding
limits:
$$\Htw_{d}(\SL,\Z_p):= \lim_{N \rightarrow \infty} \Htw_{d}(\SL_N,\Z_p)$$
are  finitely generated $\Z_p$-modules. Moreover, the action of 
$\SL_N(\OL_p):=\displaystyle{\prod_{v|p} \SL_N(\OL_v)}$ on $\Htw_{d}(\SL_N,\Z_p)$
is trivial for sufficiently large~$N$.
\end{theorem}

The perspective of this paper is that  the groups $\Htw^*$  and $\Htw_*$ are not merely a convenient way to package information concerning the classical cohomology and homology groups, but are instead the correct object of study. 
From an arithmetic perspective,  the classical cohomology groups $H^*(\Gamma_N(p^r),\F_p)$ should carry
interesting information about the field $\OL_F$. 
Let $G_N(p^r)$ be the kernel of the map $\SL_N(\OL_p) \rightarrow \SL_N(\OL/p^r)$; 
there is an inflation map $H^*(G(p^r),\F_p) \rightarrow H^*(\Gamma_N(p^r),\F_p)$. The
ultimate reason for
the failure of stability of classical congruence subgroups
is that the source of this map is unstable. On the other hand, the source only
contains \emph{local} information concerning $F$, in particular,
 it only depends  on the  decomposition of $p$ in $\OL_F$\footnote{In this paper, we use \emph{local} and
 \emph{global} in the sense used by number theorists, namely, to distinguish quantities which only
 depend on the embeddings of $F$ into either $\Qbar_p$ or $\mathbf{C}$ from those that
 depend on more subtle invariants of $F$. For example, the ranks of the $K$-groups $K_n(\OL_F) \otimes \Q$ are local because they are determined
 by the number of real and complex embeddings of $F$~\cite{Borel}. In contrast, the torsion subgroups of
 $K_n(\OL_F)$ are global.}.
 Hence, by taking the \emph{direct limit over $r$}
(or inverse limit in the case of homology), one  excises all the local terms and arrives at a group that both
contains all the interesting global information and is stable in $N$. From this optic, the relationship
between 
representation stability (in this particular context) and arithmetic disappears and, in particular,
representation stability seems more to be a phenomenon related to the cohomology of $p$-adic Lie groups rather than
 arithmetic groups.
 Theorem~\ref{theorem:MF} then says, in effect, that once one removes the uninteresting local terms from
 the cohomology of congruence subgroups, what remains is stable and finitely generated over $\Z_p$, and carries, 
 as we shall demonstrate,  interesting arithmetic information.

\medskip

An instructive and elementary example of what information is contained in completed cohomology can
already be seen in degree one, providing we consider the group $\GL_N$ rather than $\SL_N$. 
Suppose that $N$ is at least three. For convenience,
let us also suppose that  $F$
admits a real embedding, so that the congruence subgroup property holds
(exactly) 
for $\SL_N(\OL_F)$ (see~\cite{Serre}, Theorem~3.6); more generally, by the same
reference, if we assume that $p$ is prime to the order~$w_F$ of the finite group 
 of roots
of unity in $F$, then the congruence kernel is abelian of order prime to~$p$.
The group~$H_1(\SL_N(\OL_F),\Z)$ may be identified with the abelianization of~$\SL_N(\OL_F)$.
The congruence subgroup property implies (under our assumptions) that that any finite  quotient
of order prime to~$w_F$
must factor through the map~$\SL_N(\OL_F) \rightarrow \prod_{S} \SL_N(\OL_{F,v})$
for some finite set of places~$v \in S$. 
However, the abelianization of~$\SL_N(\OL_{F,v})$ is trivial for any finite place~$v$ (assuming that~$N > 2$), and so~$H_1(\SL_N(\OL_F),\Z_p)$ is trivial.
The same argument shows that for the congruence
subgroup~$\Gamma_N(p^r) \subset \SL_N(\OL_F)$, there is an isomorphism
$$H_1(\Gamma_N(p^r),\Z_p) = \Gamma_N(p^r)/\Gamma_N(p^{2r}) \simeq G_N(p^r)/G_N(p^{2r}).$$
It follows that the natural maps
$H_1(\Gamma_N(p^{2r}),\Z_p) \rightarrow H_1(\Gamma_N(p^r),\Z_p)$
will be zero, and so, in particular, the completed 
homology group~$\Htw_1(\SL_N,\Z_p)$ vanishes.
This implies that the completed homology group~$\Htw_1(\GL_N,\Z_p)$
may be identified under the determinant map with the completed
first homology groups of the unit group~$\OL^{\times}_F = \GL_1(\OL_F)$.
Let~$\GL_N(\OL_F,p^r) \subset \GL_N(\OL_F)$ denote the principal
congruence subgroup of level~$p^r$. 
Then there is an equality
$$ \Htw_1(\GL_N,\Z_p) := \lim_{\leftarrow} H_1(\GL_N(\OL_F,p^r),\Z_p) = \ker \left( \OL^{\times}_F \otimes \Z_p \rightarrow
\prod_{v|p} \OL^{\times}_v\right).$$
The claim that the right hand side vanishes is exactly the statement of Leopoldt's
conjecture, which is  well known to be
a deep open problem concerning the arithmetic of the field $F$. 
More precisely, our assumptions on roots of unity in~$F$ imply that~$\OL^{\times}_F \otimes \Z_p$
is torsion free; the usual statement of Leopoldt's conjecture is that the map:
$$\OL^{\times}_F \otimes \Q_p \rightarrow \prod_{v|p} F^{\times}_v$$
is injective.
For those unfamiliar with Leopoldt's conjecture, it may be helpful to imagine
replacing the prime~$p$ by the prime~$\infty$. The corresponding statement is then
the injectivity of the map
$$\OL^{\times}_F \otimes \R \rightarrow \prod_{v| \infty} F^{\times}_v.$$
This map is exactly the classical regulator map ($u \mapsto (\log |u|_v)$) whose injectivity
was established by Dirichlet as the first step in proving the unit theorem.
In this paper, we shall argue how analogous questions and conjectures
 concerning the groups~$\Htw_d(\SL,\Z_p)$
for higher~$d$  concern the (conjectural) injectivity of higher~$p$-adic regulator maps,
and bear the same relation to Borel's higher regulator maps as Leopolodt's conjecture
bears to the classical regulator map. (Why the~$p$-adic versions of these maps
seem much harder to control than their real analogues  is somewhat
of a mystery.)

\medskip

Theorem~\ref{theorem:MF} gives a very good qualitative description of completed cohomology
in  the stable range. 
The main concern of this paper, which can be considered a sequel to~\cite{CEA}, is then to address the
groups $\Htw_*$ in a \emph{quantitative} manner.
One may  also ask whether information concerning $\Htw_*$ or $\Htw^*$ can be translated into information 
concerning the classical cohomology groups.
For example, Benson Farb asked the author whether Theorem~\ref{theorem:MF} can be utilized for
the \emph{computation} of explicit cohomology groups.
In order to answer the broader question of what it might mean to \emph{compute} classical
cohomology groups,
we first recall what happens at full level. The stable homology of $\GL_N(\Z)$ may be identified with the homology of
$\BGL(\Z)^{+}$, and the \emph{homotopy} groups of $\BGL(\Z)^{+}$ for $n \ge 1$ are the algebraic $K$-groups
$K_n(\Z)$, which are finitely generated over $\Z$ and are
 completely known, at least in terms of Galois cohomology groups (see Theorem~\ref{theorem:V}). 
 The ranks of the \emph{rational} cohomology groups are
determined by the signature of the field
$F$ by a theorem of Borel~\cite{Borel}.
In this paper, we show that:
\begin{itemize}
\item The groups $\Htw_*$  are the continuous homology groups of a certain homotopy fibre
 $\Yoon$ (Theorem~\ref{theorem:content}) whose homotopy groups with coefficients
 in $\Z_p$ we can 
 calculate rationally (Lemma~\ref{lemma:qm}), and even integrally in many cases, in terms
 of Galois cohomology groups.
 \item 
Assuming, in addition, a generalization of Leopoldt's conjecture, we can also give
(Theorem~\ref{theorem:padicborel}) a complete description
of $\Htw_*(\SL,\Q_p):=\Htw_*(\SL,\Z_p) \otimes \Q$; note that this is different from the inverse  limit with coefficients in $\Q_p$, which,
by Borel's theorem, coincides with the stable rational homology at level one.	
\item For $F = \Q$, we compute $H_2(\Gamma_N(p),\F_p)$ for sufficiently large $N$ 
(Corollary~\ref{corr:benson}), answering a question of Farb).
\item For very regular primes (Definition~\ref{df:veryregular}) $p = \p \pbar$ in an imaginary quadratic field, we compute $H_*(\Gamma_N(\p^m),\Z_p)$ in the stable range explicitly (Theorem~\ref{theorem:exp}).
\end{itemize}
We give an example of a  precise theorem  we can state now. 
Let $G_S$ denote the Galois group of the maximal Galois extension
of $F$ unramified away from primes dividing $p$ and $\infty$.
Let~$w_F$ denote the  number of roots of unity in~$F$.

\begin{theorem}  \label{theorem:two}  If $p$ 
 does not divide $w_F \cdot |K_2(\OL_F)|$, then there
is an isomorphism
$$\Htw^2(\SL,\Q_p/\Z_p)  \simeq H^1(G_S,\Q_p/\Z_p(-1)).$$
For all $p$, the equality holds up to a finite group.
If $F = \Q$ and $p \ge 3$, then
$\Htw^2(\SL,\Q_p/\Z_p) = \Q_p/\Z_p$ and
$\Htw_2(\SL,\Z_p) \simeq \Z_p$. 
\end{theorem}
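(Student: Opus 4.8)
The plan is to compute $\Htw_2(\Z_p)$ directly and then recover $\Htw^2(\Q_p/\Z_p)$ from the universal-coefficient duality $\Htw^2(\Q_p/\Z_p) = \Hom(\Htw_2(\Z_p),\Q_p/\Z_p)$ recorded in the introduction. By Theorem~\ref{theorem:content} I may replace $\Htw_*(\Z_p)$ by the continuous homology $\Hc_*(\Yoon,\Z_p)$ of the homotopy fibre $\Yoon$, and by Lemma~\ref{lemma:qm} together with its integral refinement the homotopy groups of $\Yoon$ are governed by a long exact sequence comparing the $K$-groups $K_*(\OL_F)\otimes\Z_p$ of $\BSL(\OL_F)^{+}$ with the continuous $K$-theory of the $p$-adic groups $\SL_N(\OL_p)$. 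First I would identify, in degree two, $\Hc_2(\Yoon,\Z_p)$ with $\pi_2(\Yoon)$ via the Hurewicz theorem; since $SK_1(\OL_F)=0$ makes $\BSL(\OL_F)^{+}$ simply connected, the only obstruction to this identification is the term $\pi_1(\Yoon)$, which is built from $K_2(\OL_F)$, and this is exactly where the hypothesis on $K_2(\OL_F)$ enters.

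The heart of the argument is to recognise the resulting group as compactly supported Galois cohomology. The fibre sequence defining $\Yoon$ is precisely the global-to-local comparison map in $p$-adic $K$-theory; computing the continuous $K$-theory of $\OL_p$ through the cyclotomic trace (equivalently, the \'etale comparison for local fields) and applying Quillen--Lichtenbaum to $K_*(\OL_F)\otimes\Z_p$ identifies $\Htw_2(\Z_p)$ with $H^2_c(G_S,\Z_p(2))$, the passage to compact support being the manifestation of having excised the local terms at $p$. Poitou--Tate duality for $\Spec\OL_F[1/p]$ (of cohomological dimension $3$) then gives a perfect pairing identifying $H^2_c(G_S,\Z_p(2))$ with the Pontryagin dual of $H^1(G_S,\Q_p/\Z_p(1-2)) = H^1(G_S,\Q_p/\Z_p(-1))$; dualising once more yields $\Htw^2(\Q_p/\Z_p)\simeq H^1(G_S,\Q_p/\Z_p(-1))$. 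The $K_2$-hypothesis enters through the localization sequence $\bigoplus_{v\mid p}H^1(\Q_v,\Z_p(2))\to H^2_c(G_S,\Z_p(2))\to H^2(G_S,\Z_p(2))$, whose right-hand term is $K_2(\OL_F)\otimes\Z_p$ by the Tate/Quillen--Lichtenbaum isomorphism: when $p\nmid\#K_2(\OL_F)$ this term vanishes and the identification is exact, while in general it contributes the finite discrepancy recorded in the second assertion.

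For $F=\Q$ I would first note $K_2(\Z)\simeq\Z/2$, so $K_2(\Z)\otimes\Z_p=0$ for every $p\ge 3$ and the isomorphism $\Htw^2(\Q_p/\Z_p)\simeq H^1(G_S,\Q_p/\Z_p(-1))$ applies. It then remains to compute $H^1(G_{\Q,S},\Q_p/\Z_p(-1))$. Its $\Z_p$-corank equals $\dim_{\Q_p}H^1(G_{\Q,S},\Q_p(-1))$, which the global Euler characteristic formula pins down: complex conjugation acts on $\Q_p(-1)$ by $-1$, so the archimedean contribution is $1$, and both $H^0(G_{\Q,S},\Q_p(-1))$ and, by duality against $\Q_p(2)$, $H^2(G_{\Q,S},\Q_p(-1))$ vanish, giving corank exactly $1$. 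To see that the group is all of $\Q_p/\Z_p$ one must show the finite part is trivial, i.e. $H^2(G_{\Q,S},\Z_p(-1))=0$; via the Poitou--Tate tail this reduces to the vanishing of the Tate--Shafarevich group $\mathrm{Sha}^1(G_{\Q,S},\Z_p(2))$, which holds for $\Q$. Thus $H^1(G_{\Q,S},\Q_p/\Z_p(-1))=\Q_p/\Z_p$, and dualising gives $\Htw_2(\Z_p)\simeq\Z_p$.

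The main obstacle I anticipate is the integral, as opposed to rational, bookkeeping: Lemma~\ref{lemma:qm} controls the homotopy of $\Yoon$ only rationally, so the delicate point is to track the $p$-torsion precisely enough to upgrade the ``up to a finite group'' statement to an exact isomorphism under the $K_2$-hypothesis. Concretely this means correctly computing the continuous $K$-theory of $\SL_N(\OL_p)$ integrally and matching Tate twists across the Poitou--Tate duality, and controlling $\pi_1(\Yoon)$ in the Hurewicz step. For $F=\Q$ and $p=3$ there is a further subtlety, since $K_3(\Z)=\Z/48$ has nontrivial $3$-torsion; one must verify directly that this does not contribute a finite summand to $H^1(G_{\Q,S},\Q_3/\Z_3(-1))$, which is precisely what the vanishing of $\mathrm{Sha}^1(G_{\Q,S},\Z_3(2))$ secures.
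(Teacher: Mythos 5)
Your strategy is the paper's own up to repackaging: Theorem~\ref{theorem:content} plus Hurewicz reduces everything to identifying $\Ktw_2(\OL)=\pi_2(\Yoon;\Z_p)$ with $H^1(G_S,\Q_p/\Z_p(-1))^{\vee}$, and your compactly supported group $H^2_c(G_S,\Z_p(2))$ is literally the paper's $\Ktw^{?}_2(\OL)$, with Artin--Verdier duality playing the role of the Poitou--Tate sequence~(\ref{eq:PT}). But the heart of your argument has a genuine gap. The assertion that the cyclotomic trace and Quillen--Lichtenbaum ``identify $\Htw_2(\Z_p)$ with $H^2_c(G_S,\Z_p(2))$'' is not something you can quote: stated in all degrees this is precisely Conjecture~\ref{conj:answer}, which is open exactly because no natural map from the homotopy-fibre long exact sequence to the Poitou--Tate sequence is known (see the remark following the conjecture), so no five-lemma argument is available. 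What saves degree two --- and this is the actual content of Lemma~\ref{lemma:qm}, which the paper's proof in \S\ref{section:two} simply invokes --- is that no comparison map is needed: once $K_2(\OL)\otimes\Z_p=0$, the fibration sequence exhibits $\pi_2(\Yoon;\Z_p)$ as $\coker\bigl(K_3(\OL)\otimes\Z_p\to K_3(\OL_p;\Z_p)\bigr)$, Poitou--Tate exhibits $H^1(G_S,\Q_p/\Z_p(-1))^{\vee}$ as $\coker\bigl(H^1(G_S,\Z_p(2))\to\prod_{S} H^1(G_v,\Z_p(2))\bigr)$, and Theorem~\ref{theorem:V} says these are cokernels of the \emph{same} map. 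Your localization-sequence sentence supplies the Galois half of this, but the $K_2$-hypothesis must also be spent on the homotopy half; instead you spend it on $\pi_1(\Yoon)$, and that allocation is wrong: $\pi_1(\Yoon;\Z_p)\simeq\coker\bigl(K_2(\OL;\Z_p)\to K_2(\OL_p;\Z_p)\bigr)$, which the hypothesis does not kill but rather identifies with $K_2(\OL_p;\Z_p)\simeq\prod_{v|p}H^0(F_v,\Q_p/\Z_p(-1))^{\vee}$, a group that is nonzero for any $F$ failing condition $(*)$ of \S\ref{section:star} (e.g.\ $F\supseteq\Q(\zeta_p)$). Since it is finite, your ``up to a finite group'' claim survives, and it does vanish for $F=\Q$, $p\ge 3$; but your stated justification of the Hurewicz step fails for general $F$.

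Your endgame for $F=\Q$ is genuinely different from the paper's, which gets $H^1(G_S,\F_p(-1))=\Z/p$ in one stroke from Herbrand's theorem and class field theory and then passes to $\Q_p/\Z_p$-coefficients. Your Euler-characteristic computation of the corank (equal to $1$, with $h^0=h^2=0$ for $\Q_p(-1)$) is fine, but the divisibility bookkeeping is off: the obstruction to $H^1(G_S,\Q_p/\Z_p(-1))$ being divisible is the torsion in $H^2(G_S,\Z_p(-1))$, and its vanishing requires $\mathrm{Sha}^1(G_S,\Q_p/\Z_p(2))\simeq\mathrm{Sha}^2(G_S,\Z_p(-1))^{\vee}$ --- not $\mathrm{Sha}^1(G_S,\Z_p(2))$, which is a different group --- together with, for $p=3$, an analysis of the tail of the nine-term sequence, since $H^2(\Q_3,\Z_3(-1))\simeq H^0(\Q_3,\Q_3/\Z_3(2))^{\vee}=\Z/3\neq 0$. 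Both checks do go through: from $H^1(G_S,\Z_p(2))=K_3(\Z)\otimes\Z_p$ finite and $H^2(G_S,\Z_p(2))=K_2(\Z)\otimes\Z_p=0$ one gets $H^1(G_S,\Q_p/\Z_p(2))=0$, hence the required $\mathrm{Sha}$ vanishing, and the resulting surjection of $H^2(\Q_3,\Z_3(-1))$ onto $H^0(G_S,\Q_3/\Z_3(2))^{\vee}=\Z/3$ is forced to be an isomorphism. So this part is repairable, and it correctly absorbs your $p=3$ worry about $K_3(\Z)=\Z/48$; but as written the reduction you state is not the one you need.
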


We expect that the equality holds without any assumption on the order
of $K_2(\OL_F)$, and this would follow from Conjecture~\ref{conj:answer}. (Note, however,
that~$K_2(\OL_F)$ is always finite.)
This theorem was inspired by the results of~\cite{CV}, particularly Chapter~8, which suggests a link between
classes in the cohomology of $H^2(\Gamma_N(M),\Q_p/\Z_p)$ of congruence subgroups and classes in
$H^1(F,\Q_p/\Z_p(-1))$  unramified outside $M$ (see, in particular, the discussion in~\S8.3 of \emph{ibid}).

\subsection{Acknowledgements}  I would like to thank Benson Farb for asking the question that led to this 
paper.
I would also like to thank Matthew Emerton, Eric Friedlander, Paul Goerss, Andrei Suslin,  Akshay Venkatesh, and Chuck Weibel  for useful conversations, in particular to Eric Friedlander to suggesting the use of the Zeeman comparison theorem,  
and 
Paul Goerss for  suggesting that the Eilenberg--Moore spectral sequence was
(in certain contexts) a better way to compute the cohomology of a fibre in a fibration than the Serre spectral sequence. 
I would also like to thank Toby Gee (and other members of the Imperial reading group) for some helpful comments on an earlier version of this manuscript.
The influence of~\cite{CV} on this paper should also be clear.
Finally, I would like to thank the tremendous job of the referee, who, apart from a very
thorough report which helped
to correct a number of minor errors, lacunae, and other inaccuracies (both mathematical and pedagogical),  also 
taught me
some algebraic topology.

\section{Homotopy}

\subsection{Completed \texorpdfstring{$K$}{K}-theory}
		In this section, we define the completed $K$-groups $\Ktw_*(\OL)$ of $\OL$ with respect to a prime $p$, at least for the ring of integers~$\OL = \OL_F$ of a number field~$F$.
Here the notion of ``completion'' refers (in principle) to replacing $\Gamma_N = \GL_N(\OL)$ by
the congruence subgroups $\Gamma_N(p^m)$ for arbitrarily large $m$, rather than other possible forms
of completion --- the notation is chosen via the analogy with the completed homology groups
$\Htw_*$. 
The groups~$\Ktw_n(\OL)$ are essentially, depending on the parity of $n$, either
the kernel or cokernel  of Soul\'{e}'s $p$-adic regulator map~\cite{SouleReg}.

For any prime $p$ and integer $m$,  define~$X_m = X_m(\OL,p^m)$
to be the  homotopy fibre of the following map:
$$\begin{diagram}
{\BGL(\OL)}^{+} 
& \rTo & {\BGL(\OL/p^m)}^{+} \end{diagram}$$
The homotopy groups of $X_m$ live in a long exact sequence with the $K$-groups
of $\OL$ and of $\OL/p^m$ (at least for~$n \ge 1$). In particular, by a result of Quillen~\cite{QuillenFinite},
they are finitely generated (see also Corollary~3.8 of~\cite{CharneyStable}).
The map $B \Gamma(p^m) \rightarrow X_m$ is \emph{not}, however,  a
homological equivalence 
--- this is
the so called ``failure of excision.''
(This remark was made in the introduction to~\cite{Charney}.) The idea behind this paper is that,
since the \emph{limiting} groups $\Htw_*$ have a trivial
 action of~$\SL(\OL_p)$,  then some analogue of excision
will hold\footnote{Literally, of course, this is not true.
 Since the homotopy fibre of~$B \GL(\OL) \rightarrow B \GL(\OL/p^m)$ 
is~$B \Gamma(p^m)$, one way of rephrasing the difficulty above is that the~$+$
construction does not commute with taking homotopy fibres. The 
homotopy fibre of~$B \GL(\OL) \rightarrow B \GL(\OL_p)$, however, is essentially
the discrete set~$\GL(\OL_p)/\GL(\OL)$ --- the map on~$\pi_1$ is injective and there
are no higher homotopy groups. The plus construction applied to the discrete
set~$\GL(\OL_p)/\GL(\OL)$ (whatever the plus construction would mean here), however, is certainly
not the homotopy fibre of~$\BGL(\OL)^{+} \rightarrow \BGL(\OL_p)^{+}$.}
for $m = \infty$.

\medskip

Since~$\pi_0(\BGL(R)^{+}) = 0$  does not correspond to~$K_0(R)$, it
is technically more convenient to work with the homotopy fibrations of~$K$-theory
spaces:
$$
\begin{diagram}
K(\OL,p^m)  & \rTo & K(\OL) & \rTo & K(\OL_p/p^m), \\ 
K(\OL,\OL_p)  & \rTo &K(\OL) & \rTo & K(\OL_p).
\end{diagram}
$$
Note that~$K(R,I)$ usually denotes the homotopy fibre of the map~$K(R) \rightarrow K(R/I)$,
and this is the meaning of~$K(\OL,p^m)$;  by
abuse of notation, we let~$K(\OL,\OL_p)$ be the homotopy fibre of~$K(\OL)
\rightarrow K(\OL_p)$; hopefully
no confusion will arise.

\medskip

We make the following definition:

\begin{df} The completed $K$-groups $\Ktw_n(\OL):=K_n(\OL,\OL_p;\Z_p)$ with respect to a prime $p$ are, for~$n \ge 1$, 
the homotopy groups~$\pi_n(K(\OL,\OL_p);\Z_p)$. 
\end{df}

We make below (Definition~\ref{df:zero}) an ad hoc definition of~$\Ktw_0(\OL,\OL_p;\Z_p)$. 

\begin{lemma} \label{lemma:longlong}
Let~$\OL = \OL_F$ for a number field~$F$. There is a long exact sequence:
$$\rightarrow K_n(\OL,\OL_p;\Z_p) \rightarrow
K_n(\OL) \otimes \Z_p \rightarrow K_n(\OL;\Z_p) \rightarrow
K_{n-1}(\OL,\OL_p;\Z_p) \rightarrow
\ldots $$
\end{lemma}

\begin{proof} 
We first note that all the relevant spaces~$K(\OL,\OL_p)$, $K(\OL)$, $K(\OL_p)$, etc. we
 consider are infinite loop spaces.
We now  recall some basic constructions,
definitions, and theorems concerning~$K$-theory with coefficients.
\begin{enumerate}
\item (\cite{WB}, IV.2.1):  The homotopy groups with coefficients in $\Z/p^m$ are defined to be
$$\pi_n(X;\Z/p^m):= [P^n(\Z/p^m),X],$$ where $P^n(\Z/p^m)$ 
is the space formed from the sphere~$S^{n-1}$ by attaching an $n$-cell via a degree~$p^m$ map.
In general, this space is a group for~$n \ge 3$ and a set for~$n = 2$, however,
it is well defined as a group for all~$n \ge 1$ when~$X$ is an infinite loop space.
Given a Serre fibration, there is a naturally associated long exact sequence.
\item (\cite{WB}, IV.2.9): The $p$-adically completed homotopy groups $\pi_n(\EE;\Z_p)$ of
a spectrum $\EE$ are defined
to be the homotopy groups of the homotopy limit $\widehat{\EE}$ of 
$\EE \wedge P^{\infty}(\Z/p^m)$. If the $\pi_*(\EE;\Z/p^m)$ are finite for all~$m$, then
$$\pi_*(\EE;\Z_p) \simeq \projlim \pi_*(\EE;\Z/p^m).$$
\item   (\cite{WB}, IV.2.2): There is a universal coefficient sequence:
$$0 \rightarrow \pi_n(X) \otimes \Z/p^m \rightarrow \pi_n(X;\Z/p^m) \rightarrow
\pi_{n-1}(X)[p^m] \rightarrow 0.$$
\end{enumerate}

Note that $K_n(\OL):=\pi_n(K(\OL))$ for all~$n$.
 The groups $K_n(\OL)$ for~$\OL = \OL_F$ the ring
of integers of a number field are known to be finitely generated abelian groups, and hence the
projective limit 
$\projlim K_{n-1}(\OL)[p^m]$ vanishes. It follows from the universal
coefficient sequence that
$$K_n(\OL;\Z_p) = \projlim K_n(\OL;\Z/p^m) = K_n(\OL) \otimes \Z_p.$$
The groups~$K_n(\OL_p;\Z/p^r)$ are similarly finite (by Theorem~$A$ of~\cite{local}),
and so
$$K_n(\OL_p;\Z_p) = \projlim K_n(\OL_p;\Z/p^m).$$
The Serre long exact sequence gives a sequence
$$\ldots \rightarrow \pi_n(K(\OL,\OL_p);\Z/p^m))
\rightarrow K_n(\OL;\Z/p^m)  \rightarrow  K_n(\OL_p;\Z/p^m) \rightarrow \ldots $$
By comparison with the surrounding terms, 
we see that the~$\Z_p$-modules~$\pi_n(K(\OL,\OL_p),\Z/p^m)$ are finite
and also have bounded rank (as~$\Z_p$-modules) for all~$m$.
Hence, taking an inverse limit in~$m$ and replacing~$K_n(\OL;\Z_p)$
by~$K_n(\OL) \otimes \Z_p$, we obtain the desired exact sequence.
(The finiteness properties guarantee that the relevant Mittag--Leffler conditions are automatically
satisfied, and so there are no issues concerning~${\lim}^1$.)
For the tail of the long exact sequence involving terms with~$n = 0$, see the remarks
after Definition~\ref{df:zero}.
\end{proof}

It will also be useful to work with the homotopy fibrations
\begin{equation} \label{eq:fibration}
\begin{diagram}
\Ymn = SK(\OL,p^m) & \rTo & SK(\OL) & \rTo &  SK(\OL/p^m). \\ 
\Yoon = SK(\OL,\OL_p) & \rTo & SK(\OL) & \rTo & SK(\OL_p). 
\end{diagram}
\end{equation}
Since $\pi_n(\BSL(R)^{+}) \simeq \pi_n(\BGL(R)^{+}) = \pi_n(SK(R))$ for $n \ge 2$, one obtains isomorphisms 
$\pi_n(\Yoon;\Z_p) \simeq K_n(\OL,\OL_p;\Z_p)$ for $n \ge 2$ and $m \in \mathbf{N} \cup \infty$.
Working with $\Yoon$ allows us to apply the Hurewicz Theorem more usefully, since, for many
fields $F$, the fibre $\Yoon$ will be simply connected.

\subsection{Tate Global Duality}

Let $S = \{v|p \} \cup \{v|\infty\}$, and let $G_S$ denote the Galois group of the maximal
extension of $F$ outside $S$. If $M$ is a finite $G_S$-module of odd order,  then
$H^*(G_S,M) \simeq H^*(\Spec(\OL[1/p]),M)$ is finite.
If $n$ is a positive integer, let $\Z_p(n)$ denote the $n$th Tate twist of $\Z_p$, that is, the module $\Z_p$ such that the action of
 $G_{\Q}$ is via the $n$th power of the cyclotomic character. Similarly, the module $\F_p(n)$ denotes the
corresponding twist of the module $\F_p$.
  For a compact
 or discrete $\Z_p$-module $A$, let $A^{\vee} = \Hom_{\Z_p}(A,\Q_p/\Z_p)$ denote the
 Pontryagin dual of $A$ (so canonically $A^{\vee \vee} \simeq A$). For a compact or discrete $G_S$-module $M$,
 let $M^*$ denote the twisted Pontryagin dual $\Hom_{\Z_p}(M,\Q_p/\Z_p(1))$.
 For a finite $G_S$-module $M$, recall
 (\cite{Milne,TateDual}) that by Poitou--Tate duality there is an exact sequence as follows:
\begin{equation} \label{eq:PT}
\begin{diagram}
0 & \rTo & H^0(G_S,M) & \rTo & \prod_{S} H^0(G_{v},M) & \rTo & H^2(G_S,M^*)^{\vee}   & & \\
 & & & & & & \dTo  & &  \\
 & & H^1(G_S,M^*)^{\vee} & \lTo & \prod_{S} H^1(G_v,M) & \lTo & H^1(G_S,M)   & & \\
 & &  \dTo  & & & & & & \\
 & & H^2(G_S,M) & \rTo & \prod_S H^2(G_v,M) & \rTo & H^0(G_S,M^*)^{\vee} & \rTo & 0 \\
 \end{diagram}
 \end{equation}
By taking limits, one also obtains a corresponding sequence for compact $M$, where cohomology
of compact or discrete modules is taken in the usual continuous sense. 
Specifically, let $n$ be a positive integer and let $M = \Z_p(n)$. 
Then $M^* = \Hom(M,\Q_p/\Z_p(1)) = \Q_p/\Z_p(1-n)$. Moreover, $H^0(G_S,M) = H^0(G_v,M) = 0$.
We thus obtain the following exact sequence:

$$
\begin{diagram}
0 & \rTo & H^2(G_S,\Q_p/\Z_p(1-n))^{\vee} & \rTo & H^1(G_S,\Z_p(n)) &
\rTo & \prod_{S} H^1(G_v,\Z_p(n))   \\
& \rTo & H^1(G_S,\Q_p/\Z_p(1-n))^{\vee} & \rTo & H^2(G_S,\Z_p(n)) & \rTo & \prod_S H^2(G_v,\Z_p(n))  
 \\
& \rTo & H^0(G_S,\Q_p/\Z_p(1-n))^{\vee}   &\rTo   & 0. & & \\
\end{diagram}
$$
The following result follows from work of many, including
Soul\'{e}~\cite{Soule2},  Madsen and Hesselholt~\cite{local},
Voevodsky, Rost, Suslin,  Weibel and others~\cite{KZ,Voe}:
\begin{theorem} \label{theorem:V} Let $p > 2$. Then one has isomorphisms for $n > 1$ as follows:
$$K_{2n-1}(\OL) \otimes \Z_p \simeq H^1(G_S,\Z_p(n)), \qquad K_{2n-2}(\OL) \otimes \Z_p  \simeq 
H^2(G_S,\Z_p(n)),$$
$$K_{2n-1}(\OL_p; \Z_p) \simeq \prod_{S} H^1(G_v,\Z_p(n)), \qquad K_{2n-2}(\OL_p; \Z_p)  \simeq 
\prod_{S} H^2(G_v,\Z_p(n)).$$
Moreover, the maps $K_n(\OL) \otimes \Z_p \rightarrow K_n(\OL_p;\Z_p)$ induce the natural maps on cohomology.
\end{theorem}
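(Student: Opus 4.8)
The plan is to deduce all four isomorphisms from the Quillen--Lichtenbaum conjecture, now a theorem through the norm residue (Bloch--Kato) isomorphism of Voevodsky, Rost, Weibel and others~\cite{Voe,KZ}, together with the identification of the \'{e}tale cohomology of $\Spec(\OL[1/p])$ with the continuous Galois cohomology of $G_S$. The engine is the motivic (Atiyah--Hirzebruch) spectral sequence
$$E_2^{s,t} = H^s_{\et}\bigl(\Spec(\OL[1/p]),\Z_p(t)\bigr) \Longrightarrow K_{2t-s}(\OL[1/p];\Z_p),$$
constructed by Bloch--Lichtenbaum--Friedlander--Suslin--Levine, whose $E_2$-page is identified with \'{e}tale cohomology exactly because the norm residue theorem forces mod-$p^\nu$ motivic cohomology to agree with \'{e}tale cohomology in the relevant range. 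Since $p$ is odd, $\Spec(\OL[1/p])$ has $p$-cohomological dimension $2$, so the only nonzero rows are $s=0,1,2$ and $H^s_{\et}(\Spec(\OL[1/p]),\Z_p(t)) = H^s(G_S,\Z_p(t))$. A localization sequence lets one pass between $K_*(\OL)$ and $K_*(\OL[1/p])$: the residue fields at the primes above $p$ are finite of characteristic $p$, and their higher $K$-groups have order prime to $p$, so $K_*(\OL)\otimes\Z_p \simeq K_*(\OL[1/p])\otimes\Z_p$ in positive degrees.

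The next step is a parity count. For $K_{2n-1}$ we need $2t-s = 2n-1$ with $0\le s\le 2$; since $2t$ is even this forces $s=1$, $t=n$, so the single surviving term is $H^1(G_S,\Z_p(n))$. For $K_{2n-2}$ we need $s$ even, hence $s\in\{0,2\}$: the term $(s,t)=(2,n)$ gives $H^2(G_S,\Z_p(n))$, while $(s,t)=(0,n-1)$ gives $H^0(G_S,\Z_p(n-1))$, which vanishes for $n>1$ and contributes only in the edge case $n=1$ ($K_0$). With only three nonzero rows the one potentially nonzero differential is $d_2\colon E_2^{0,t}\to E_2^{2,t+1}$, i.e. $H^0(G_S,\Z_p(t))\to H^2(G_S,\Z_p(t+1))$; since $H^0(G_S,\Z_p(t))=0$ for the positive twists in play, every differential touching the surviving groups has a zero source or target and the sequence degenerates at $E_2$. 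The two-step filtration therefore collapses, yielding $K_{2n-1}(\OL)\otimes\Z_p\simeq H^1(G_S,\Z_p(n))$ and $K_{2n-2}(\OL)\otimes\Z_p\simeq H^2(G_S,\Z_p(n))$ (using $K_n(\OL;\Z_p)=K_n(\OL)\otimes\Z_p$, which holds by the finite generation of $K_*(\OL)$ already noted).

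For the local factors I would run the identical spectral sequence over $\Spec(\OL_v)$, replacing $G_S$ by the local Galois group $G_v$, which for $v\mid p$ again has $p$-cohomological dimension $2$; the same parity and degeneration argument gives $K_{2n-1}(\OL_v;\Z_p)\simeq H^1(G_v,\Z_p(n))$ and $K_{2n-2}(\OL_v;\Z_p)\simeq H^2(G_v,\Z_p(n))$. Alternatively one can quote the direct computation of $K_*(\OL_v;\Z_p)$ by topological cyclic homology due to Hesselholt--Madsen~\cite{local}. Taking the product over $v\mid p$ gives the two local isomorphisms. Finally, the compatibility of $K_n(\OL)\otimes\Z_p\to K_n(\OL_p;\Z_p)$ with the localization maps $\prod_v\mathrm{res}_v$ on Galois cohomology is the naturality of the motivic spectral sequence with respect to the base change $\Spec(\OL_v)\to\Spec(\OL[1/p])$, so it is automatic once the two sides have been identified functorially.

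The genuine mathematical content---the main obstacle were one to prove this from scratch rather than assemble it---is the norm residue isomorphism, which is what makes the $E_2$-page \'{e}tale cohomology and hence makes the whole comparison possible; everything downstream (cohomological dimension, the parity count, degeneration, and naturality) is formal. The hypothesis $p>2$ enters precisely here: for $p=2$ the real places raise the cohomological dimension above $2$ and introduce genuine $2$-torsion phenomena, so the clean two-row picture and the stated isomorphisms fail.
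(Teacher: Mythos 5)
Your proposal is correct and takes essentially the same route as the paper: the paper gives no independent argument but simply cites exactly the ingredients you assemble --- the norm residue theorem \cite{Voe,KZ} making the motivic spectral sequence \'{e}tale and degenerate (with Soul\'{e} \cite{Soule2}), the Hesselholt--Madsen computation \cite{local} for the factors at $v \mid p$, and compatibility of the motivic spectral sequences for the naturality of the maps $K_n(\OL)\otimes\Z_p \rightarrow K_n(\OL_p;\Z_p)$. The one caveat, shared by the paper's own statement, is the edge case $n=1$: there your localization step $K_*(\OL)\otimes\Z_p \simeq K_*(\OL[1/p])\otimes\Z_p$ fails in degree $1$ (since $K_0$ of the residue fields is $\Z$), and the $H^0(G_S,\Z_p(0))$ contribution to $K_0$ that you flag is left unresolved, but for $n \ge 2$ your assembly is precisely the standard proof the paper invokes.
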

For specific references, we refer to Theorem~A of~\cite{local} for the~$K$-theory
of (rings of integers in) local fields (see also Theorem~61 of~\cite{KZ}), and
Theorem~70 of  Weibel's survey~\cite{KZ} for the~$K$-theory of global fields. We note
that, in the latter reference, the Galois cohomology groups have been excised  from
the statement for the benefit of topologists, but they are easily extracted from the argument.
The compatibility of the these isomorphisms follows from the compatibility of the
corresponding motivic spectral sequences.
Note that for $p = 2$, the equalities hold up to a (known) finite group, and hold
on the nose if $F$ is totally imaginary.  For the groups~$K_1$, which are known classically,
one must make a minor adjustment to these descriptions in terms of Galois cohomology:
 this amounts (for experts) to replacing the Galois cohomology groups~$H^1$ with
the Bloch--Kato groups~$H^1_{\fff}$. More prosaically, there are  Kummer isomorphisms
$$H^1(G_S,\Z_p(1)) = (\OL_F[1/p])^{\times} \otimes \Z_p,
\qquad H^1(G_v,\Z_p(1)) = F^{\times}_v \otimes \Z_p,$$
whereas the~$K_1$ groups we are interested in
should be identified with the groups
$$H^1_{\fff}(G_S,\Z_p(1)) = (\OL_F)^{\times} \otimes \Z_p,
\qquad H^1_{\fff}(G_v,\Z_p(1)) = \OL^{\times}_v \otimes \Z_p$$
respectively. This discrepancy is related to the fact that~$K_*(\OL_F[1/p])$ and~$K_*(\OL_F)$
coincide in higher degrees but not in degree one.
We now have:
\begin{lemma} \label{lemma:qm}  Define the
groups $\Ktw^{?}_n(\OL)$ as follows:
$$\Ktw^{?}_{2n-1}(\OL):= H^2(G_S,\Q_p/\Z_p(1-n))^{\vee} \oplus  H^0(G_S,\Q_p/\Z_p(-n))^{\vee},$$
$$\Ktw^{?}_{2n-2}(\OL):=  H^1(G_S,\Q_p/\Z_p(1-n))^{\vee}.$$
Then there is an exact sequence as follows:
$$\dotsc \rightarrow \Ktw^{?}_{n}(\OL) \rightarrow  K_n(\OL) \otimes \Z_p  \rightarrow
K_n(\OL_p; \Z_p) \rightarrow \Ktw^{?}_{n-1}(\OL) \rightarrow \dotsc $$
There are rational isomorphisms
$\Ktw_n(\OL) \otimes \Q = \Ktw^{?}_n(\OL) \otimes \Q$,
 If either $K_n(\OL)$ is finite of order prime to $p$ or $K_{n+1}(\OL_p;\Z_p) = 0$, 
 then there is an isomorphism $\Ktw_n(\OL) =  \Ktw^{?}_n(\OL).$
\end{lemma}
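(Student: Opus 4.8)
The plan is to compare the two long exact sequences at our disposal: the genuine sequence
$$\dotsc \rightarrow \pi_n(\Yoon;\Z_p) \rightarrow K_n(\OL)\otimes\Z_p \rightarrow K_n(\OL_p;\Z_p) \rightarrow \pi_{n-1}(\Yoon;\Z_p)\rightarrow\dotsc$$
coming from the homotopy fibration~\eqref{eq:fibration} (with $\Ktw_n(\OL)=\pi_n(\Yoon;\Z_p)$ in degrees $n\ge 2$), and the ``predicted'' sequence asserted in the statement. First I would pin down the terms $\Ktw^?_n(\OL)$ via Poitou--Tate. Applying the compact Poitou--Tate sequence displayed above with $M=\Z_p(n)$ and substituting Theorem~\ref{theorem:V}, the maps $K_{2n-1}(\OL)\otimes\Z_p\to K_{2n-1}(\OL_p;\Z_p)$ and $K_{2n-2}(\OL)\otimes\Z_p\to K_{2n-2}(\OL_p;\Z_p)$ become exactly the localization maps $H^1(G_S,\Z_p(n))\to\prod_S H^1(G_v,\Z_p(n))$ and $H^2(G_S,\Z_p(n))\to\prod_S H^2(G_v,\Z_p(n))$. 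The Poitou--Tate sequence then identifies the kernel of the first localization map with $H^2(G_S,\Q_p/\Z_p(1-n))^\vee$, the cokernel of the first with (a piece mapping to) $H^1(G_S,\Q_p/\Z_p(1-n))^\vee$, and the cokernel of the second with $H^0(G_S,\Q_p/\Z_p(1-n))^\vee$. This is precisely the content of the defining formulas for $\Ktw^?_*$ once one tracks the Tate twists, with the extra summand $H^0(G_S,\Q_p/\Z_p(-n))^\vee$ in odd degree accounting for the shift between $K_{2n-1}$ and the $H^0$-term that appears one step up in the Poitou--Tate sequence indexed by $n+1$.

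The core assertion --- that there is an exact sequence with the $\Ktw^?$ terms interleaved between $K_n(\OL)\otimes\Z_p$ and $K_n(\OL_p;\Z_p)$ --- I would obtain by splicing. The genuine long exact sequence of the fibration shows that $\Ktw_n(\OL)$ sits in a short exact sequence
$$0\rightarrow \coker\bigl(K_{n+1}(\OL)\otimes\Z_p\to K_{n+1}(\OL_p;\Z_p)\bigr)\rightarrow \Ktw_n(\OL)\rightarrow \ker\bigl(K_n(\OL)\otimes\Z_p\to K_n(\OL_p;\Z_p)\bigr)\rightarrow 0.$$
Replacing each kernel and cokernel by its Poitou--Tate identification yields an exact sequence of the same shape with $\Ktw_n(\OL)$ replaced by $\Ktw^?_n(\OL)$; that the resulting three-term sequences splice into a single long exact sequence is formal, since the connecting maps are just the localization maps already identified by Theorem~\ref{theorem:V}. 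The parity bookkeeping is the one genuinely fiddly point: I must check that the cokernel contribution in degree $n$ (coming from index $n+1$ in Poitou--Tate) and the kernel contribution in degree $n$ (coming from index $n$) assemble into exactly $\Ktw^?_n$ as defined, including the apparent asymmetry between the even and odd formulas and the $H^0(\Q_p/\Z_p(-n))^\vee$ summand.

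For the rational isomorphism $\Ktw_n(\OL)\otimes\Q=\Ktw^?_n(\OL)\otimes\Q$, I would argue that both fit into long exact sequences with the \emph{same} rational $K$-theory terms $K_n(\OL)\otimes\Q_p$ and $K_n(\OL_p)\otimes\Q_p$ and the same connecting (localization) maps; after tensoring with $\Q$, exactness is preserved and a five-lemma comparison forces the identification, the point being that the $\Ktw^?$ sequence was \emph{built} to have those same boundary maps. For the integral isomorphism under either hypothesis, I would show the short exact sequence above collapses: if $K_{n+1}(\OL_p;\Z_p)=0$ then the cokernel term vanishes and $\Ktw_n(\OL)=\ker(\cdots)=H^2(G_S,\Q_p/\Z_p(1-n))^\vee\oplus\dotsb=\Ktw^?_n(\OL)$ directly; if instead $K_n(\OL)$ is finite of order prime to $p$ then $K_n(\OL)\otimes\Z_p=0$, so both the kernel and the relevant localization maps degenerate and the remaining term is again the cokernel piece identified with $\Ktw^?_n$. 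The main obstacle I anticipate is not any deep input --- Theorem~\ref{theorem:V} and Poitou--Tate do all the heavy lifting --- but rather the careful matching of Tate twists and parities across the two sequences, so that the interleaving of kernels and cokernels reproduces \emph{exactly} the stated formulas rather than something off by one twist or one degree.
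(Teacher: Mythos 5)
Your proposal is correct and follows essentially the same route as the paper: the exact sequence comes from splicing the Poitou--Tate sequences via the identifications of Theorem~\ref{theorem:V}, the rational isomorphism from the fact that the ker/coker extension splits after tensoring with $\Q$ (the paper phrases this as the existence of a comparison map since $\Q_p$ is projective, then applies the five-lemma), and the integral cases from observing that under either hypothesis both $\Ktw_n(\OL)$ and $\Ktw^{?}_n(\OL)$ degenerate to the same kernel or cokernel of the localization map. Your splicing bookkeeping for the $H^0(G_S,\Q_p/\Z_p(-n))^{\vee}$ summand, which the paper leaves implicit, is accurate.
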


\begin{proof} The exact sequence follows directly from the Poitou--Tate sequence~(\ref{eq:PT}) above
(one also has to make an easy check for~$n = 1$).
 It follows that \emph{if} there existed   maps
$\Ktw_n(\OL) \rightarrow \Ktw^{?}_n(\OL)$ inducing a commutative map between long exact sequences,
 then
 $\Ktw_n(\OL) \simeq \Ktw^{?}_n(\OL)$ by the $5$-lemma. Such a map exists after tensoring with
$\Q$ since $\Q_p$ is projective. If $K_n(\OL) \otimes \Z_p = 0$, then both $\Ktw_{n}(\OL)$
and $\Ktw^{?}_n(\OL)$ are isomorphic  to the quotient of $K_{n+1}(\OL_p;\Z_p)$ by the image of
$K_{n+1}(\OL) \otimes \Z_p$. Similarly, if $K_{n+1}(\OL_p;\Z_p) = 0$, then both
$\Ktw_{n}(\OL)$
and $\Ktw^{?}_n(\OL)$ are isomorphic  to the kernel of the map
$K_n(\OL) \otimes \Z_p \rightarrow K_n(\OL_p;\Z_p)$.
\end{proof}

We make the following conjecture:

\begin{conj}  \label{conj:answer} There is an isomorphism $\Ktw_n(\OL) \simeq \Ktw^{?}_n(\OL)$.
\end{conj}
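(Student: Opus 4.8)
The plan is to upgrade the proof of Lemma~\ref{lemma:qm} from a statement about associated graded pieces to a genuine isomorphism by producing a map of long exact sequences and invoking the five-lemma. Write $f_n \colon K_n(\OL)\otimes\Z_p \rightarrow K_n(\OL_p;\Z_p)$ for the common map. On the homotopy side this is induced by ${\BGL(\OL)}^{+}\rightarrow{\BGL(\OL_p)}^{+}$, and by the compatibility asserted in Theorem~\ref{theorem:V} it agrees with the restriction maps $H^*(G_S,\Z_p(n))\rightarrow\prod_S H^*(G_v,\Z_p(n))$ occurring in the Poitou--Tate sequence~\eqref{eq:PT}. Consequently both sequences exhibit their outer terms as extensions with the \emph{same} kernel and cokernel: one has $0 \rightarrow \coker(f_{n+1}) \rightarrow \Ktw_n(\OL) \rightarrow \ker(f_n) \rightarrow 0$ and, identically, $0 \rightarrow \coker(f_{n+1}) \rightarrow \Ktw^{?}_n(\OL) \rightarrow \ker(f_n) \rightarrow 0$. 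The entire content of the conjecture is therefore the assertion that these two extensions coincide.

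To identify them I would pass to étale $K$-theory. By Thomason's descent theorem the $p$-completion of ${\BGL(\OL)}^{+}$ agrees, in the range relevant to the stable computation, with the étale $K$-theory spectrum $K^{\et}(\OL[1/p])$, whose homotopy is computed by a descent spectral sequence with $E_2$-page $H^*(G_S,\Z_p(\ast))$; the analogous Dwyer--Friedlander description holds locally for $\OL_p$ with $G_v$-cohomology. Under this identification the fibre $\Xoon$ becomes, after $p$-completion, the fibre of the localization map $K^{\et}(\OL[1/p])\rightarrow\prod_{v|p}K^{\et}(\OL_v)$. The conjecture then says precisely that the homotopy of this fibre is computed by the \emph{global} dual terms $H^*(G_S,\Q_p/\Z_p(1-n))^{\vee}$ of~\eqref{eq:PT}; that is, that the fibre of the localization map realizes Poitou--Tate duality at the level of spectra rather than merely on cohomology.

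The cleanest route is to construct, functorially in the spirit of Thomason's formalism (or through Artin--Verdier duality for the relevant étale sheaves), a duality pairing on these étale $K$-theory spectra that implements Poitou--Tate duality. Such a pairing would yield the desired comparison map $\Ktw_n(\OL)\rightarrow\Ktw^{?}_n(\OL)$ as a map of fibration sequences, compatible with the maps $f_n$ on the middle terms; the five-lemma would then close the argument exactly as in Lemma~\ref{lemma:qm}.

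The hard part is constructing this comparison map \emph{integrally}. Rationally it exists because $\Q_p$ is projective, which is what makes the rational isomorphism of Lemma~\ref{lemma:qm} automatic, and when one of the outer terms $K_n(\OL)$ or $K_{n+1}(\OL_p;\Z_p)$ degenerates the extension is forced and no map is needed. In general, however, two extensions of $\ker(f_n)$ by $\coker(f_{n+1})$ need not agree, and pinning down the extension class of $\Ktw_n(\OL)$ requires genuine homotopy-theoretic input beyond the long exact sequence: one must control the relevant $k$-invariant of the fibre $\Xoon$, or equivalently produce the spectrum-level form of arithmetic duality above. It is this extension problem that prevents an unconditional proof and explains why the statement is recorded as Conjecture~\ref{conj:answer}.
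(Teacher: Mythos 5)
You have not proved the statement, but you should know that neither does the paper: this is Conjecture~\ref{conj:answer}, which is left open. The paper's entire treatment consists of Lemma~\ref{lemma:qm} (the rational isomorphism and the two degenerate cases, exactly the facts you reproduce) together with the remark immediately following the conjecture, which records the hope of proving it ``by constructing a natural map $\Ktw_n(\OL) \rightarrow \Ktw^{?}_n(\OL)$'' and closing with the five-lemma. Your proposal is therefore the same strategy the paper envisions, carried one step further: you correctly reduce the problem to identifying two extensions
$$0 \rightarrow \coker(f_{n+1}) \rightarrow \Ktw_n(\OL) \rightarrow \ker(f_n) \rightarrow 0, \qquad 0 \rightarrow \coker(f_{n+1}) \rightarrow \Ktw^{?}_n(\OL) \rightarrow \ker(f_n) \rightarrow 0,$$
you correctly note that the compatibility clause of Theorem~\ref{theorem:V} is what makes the two maps $f_n$ agree, and you correctly locate the obstruction in the extension problem, i.e.\ in the need for a comparison map that exists integrally and not merely after tensoring with $\Q$ (where projectivity of $\Q_p$ makes it automatic, as in the proof of Lemma~\ref{lemma:qm}). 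Your proposed route --- pass to \'etale $K$-theory via Thomason descent and Dwyer--Friedlander, and realize Poitou--Tate duality at the level of spectra so that the fibre of the localization map $K^{\et}(\OL[1/p]) \rightarrow \prod_{v|p} K^{\et}(\OL_v)$ is computed by the global dual terms --- is a sensible and substantive refinement of the paper's remark; a spectrum-level Tate--Poitou duality of precisely this kind was in fact later established in the literature (Blumberg--Mandell), so the route is not merely plausible.

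Two small cautions. First, your sentence ``the entire content of the conjecture is that these two extensions coincide'' is slightly stronger than the statement as written, which asks only for an isomorphism of groups; coincidence of extension classes is sufficient but not literally necessary, since non-equivalent extensions of $\ker(f_n)$ by $\coker(f_{n+1})$ can have abstractly isomorphic middle terms. (Of course, any natural construction would produce the stronger statement, and that is surely what is intended.) Second, in invoking descent you should keep track of the fact that the global term in the defining fibration is $K(\OL)$, not $K(\OL[1/p])$; the identifications of Theorem~\ref{theorem:V} absorb the localization sequence at $v|p$, but a spectrum-level argument must do so explicitly, and the comparison also relies on the Quillen--Lichtenbaum range and on $p > 2$. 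Neither point is a flaw in your analysis --- they are exactly the kind of bookkeeping the missing integral comparison map must handle, and your diagnosis of why the statement remains conjectural is accurate.
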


This seems natural enough for~$n$ even.  For~$n$ odd, the conjecture
also seems natural in light of the fact that we expect the term~$H^2(G_S,\Q_p/\Z_p(1-n))^{\vee}$
to vanish (see also Remark~\ref{remark:van}).

\begin{remark} \emph{There is some hope to prove this statement by constructing a natural map
$\Ktw_n(\OL) \rightarrow \Ktw^{?}_n(\OL)$.  The author has had some discussions with Matthew Emerton
regarding this question, and we hope to return to it in the future.}
\end{remark}

\begin{df}[The group~$\Ktw_0(\OL)$] \label{df:zero}
\emph{We now extend our definitions to~$n = 0$. For~$n = 0$, let
$$\Ktw_0(\OL):=\Ktw^{?}_0(\OL) = H^1(G_S,\Q_p/\Z_p)^{\vee}.$$
}
\end{df}

There is an isomorphism~$H^1(G_S,\Q_p/\Z_p)^{\vee}
= \Hom(G_S,\Q_p/\Z_p)^{\vee} = (G_S)^{\mathrm{ab}} \otimes \Z_p$.
There are isomorphisms
$K_1(\OL) \simeq \OL^{\times}$ and~$K_1(\OL_p;\Z_p) =  \Z_p \otimes \prod_{v|p} \OL^{\times}_p$.
Moreover,~$K_0(\OL) \simeq \Pic(\OL) =  \Cl(\OL) \oplus \Z$.
From class field theory, there is an exact sequence:
$$ \OL^{\times} \otimes \Z_p \rightarrow  \left( \prod_{v|p} \OL^{\times}_p \right) \otimes \Z_p
\rightarrow  (G_S)^{\mathrm{ab}} \otimes \Z_p \rightarrow \Cl(\OL) \otimes  \Z_p \rightarrow 0.$$
Hence the long exact sequence of
Lemma~\ref{lemma:longlong} continues as far as:
$$\ldots \rightarrow K_1(\OL) \otimes \Z_p
\rightarrow K_1(\OL_p;\Z_p) \rightarrow \Ktw_0(\OL)
\rightarrow  K_0(\OL) \otimes \Z_p \rightarrow \Z_p \rightarrow 0.$$

\medskip

After tensoring with $\Q$, the long exact sequence of Poitou--Tate and hence of $K$-groups breaks
up into  exact sequences of length $6$ (the~$H^0 \otimes \Q$ term vanishes for~$n > 1$).
One immediately obtains the following:
\begin{lemma} \label{lemma:comp} There is an equality:
$$\dim \Ktw_{2n-2}(\OL) \otimes \Q - \dim \Ktw_{2n-1}(\OL) \otimes \Q = \begin{cases} r_1 + r_2, & n > 1 \ \text{even}, \\ r_2, & n > 1 \ \text{odd}, \\
r_2 + 1, & n = 1.
\end{cases}$$
\end{lemma}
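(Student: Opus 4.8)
The plan is to deduce Lemma~\ref{lemma:comp} by computing Euler characteristics, since after tensoring with $\Q_p$ the long exact sequence of Lemma~\ref{lemma:qm} splits into exact six-term pieces (the connecting maps into and out of torsion-only terms vanish rationally). First I would observe that since $\Q_p$ is flat, $\Ktw_n(\OL) \otimes \Q$ has the same dimension as $\Ktw^?_n(\OL) \otimes \Q$ by the rational isomorphism already established; so I may work throughout with the explicitly-defined groups $\Ktw^?_n(\OL)$, whose dimensions are Galois cohomology dimensions, and never touch the completed $K$-groups directly.

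Next I would assemble the pieces of the six-term rational exact sequence relating $\Ktw^?_{2n-1}$, $K_{2n-1}(\OL)\otimes\Q_p$, $K_{2n-1}(\OL_p;\Q_p)$, $\Ktw^?_{2n-2}$, $K_{2n-2}(\OL)\otimes\Q_p$, $K_{2n-2}(\OL_p;\Q_p)$. Taking the alternating sum of dimensions around this six-term exact sequence gives zero, so
\begin{equation*}
\dim \Ktw^?_{2n-2}(\OL)\otimes\Q - \dim \Ktw^?_{2n-1}(\OL)\otimes\Q = \bigl(\dim K_{2n-2}(\OL)\otimes\Q_p - \dim K_{2n-2}(\OL_p;\Q_p)\bigr) - \bigl(\dim K_{2n-1}(\OL)\otimes\Q_p - \dim K_{2n-1}(\OL_p;\Q_p)\bigr).
\end{equation*}
Thus the computation reduces to the local and global $K$-group ranks, which by Borel's theorem (for the global ranks of $K_{2n-1}$, governed by $r_1+r_2$ or $r_2$ according to parity) and by the known structure of $p$-adic local $K$-theory (for the ranks of $K_{2n-1}(\OL_p;\Q_p) = \prod_S H^1(G_v,\Q_p(n))$, which total $[F:\Q] = r_1 + 2r_2$) can be read off. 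The even global groups $K_{2n-2}(\OL)\otimes\Q$ vanish for $n>1$, and the local even groups $K_{2n-2}(\OL_p;\Q_p) = \prod_S H^2(G_v,\Q_p(n))$ vanish as well by the local Euler characteristic formula in the relevant twists, so the only surviving contributions are the odd ranks; bookkeeping then produces $r_1+r_2$, $r_2$, or $r_2+1$ in the three stated cases.

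I would handle the three cases by tracking these local Euler characteristics twist-by-twist. The subtlety distinguishing $n$ even from $n$ odd is the contribution of the archimedean places: $H^1(G_v,\Q_p(n))$ for real $v$ depends on the parity of $n$ (the relevant $(-1)^n$ eigenspace of complex conjugation), which is precisely what makes Borel's global rank $r_1+r_2$ for even $n$ and $r_2$ for odd $n$; and the case $n=1$ is anomalous because $H^0(G_S,\Q_p/\Z_p(-n))$ and related invariant terms no longer vanish, contributing the extra $+1$ (morally the rank coming from $\OL^\times$ and the pole of the zeta function / Leopoldt-type term).

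The main obstacle I expect is not any single deep input but getting the archimedean and the $n=1$ bookkeeping exactly right: one must be careful that the definitions of $\Ktw^?_{2n-1}$ include \emph{both} the $H^2(G_S,\Q_p/\Z_p(1-n))^\vee$ and the $H^0(G_S,\Q_p/\Z_p(-n))^\vee$ summands, and that the latter $H^0$ term is nonzero exactly in the special arithmetic situations (e.g.\ contributing to the $n=1$ discrepancy), so that the alternating sum is computed against the correct local terms at $S$. Once the local and global dimension counts from Theorem~\ref{theorem:V} and Borel's theorem are substituted in with the correct archimedean contributions, the three cases fall out as stated.
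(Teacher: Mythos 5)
Your overall strategy --- transferring to $\Ktw^{?}_*$ via the rational isomorphism of Lemma~\ref{lemma:qm}, cutting the rationalized long exact sequence into finite pieces, and taking alternating sums of dimensions against Borel's global ranks and the local rank $\sum_{v|p} [F_v:\Q_p] = r_1 + 2r_2$ --- is essentially the paper's first proof (the paper also sketches a second route: apply the global Euler characteristic formula directly to the Galois-cohomological definition of $\Ktw^{?}_*$, which yields all three cases, including $n=1$, uniformly). For $n>1$ your reduction is correct, but you state the parity in Borel's theorem backwards: $\dim K_{2n-1}(\OL) \otimes \Q$ equals $r_1+r_2$ when $n$ is \emph{odd} (i.e.\ $2n-1 \equiv 1 \bmod 4$; e.g.\ $K_5(\Z)$ has rank $1 = r_1+r_2$) and $r_2$ when $n$ is \emph{even} ($2n-1 \equiv 3 \bmod 4$; $K_3(\Z)$ is finite). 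Since for $n>1$ your alternating sum reduces to $\dim \Ktw_{2n-2}(\OL)\otimes\Q - \dim \Ktw_{2n-1}(\OL)\otimes\Q = (r_1+2r_2) - \dim K_{2n-1}(\OL)\otimes\Q$, the parity you wrote would output $r_2$ for $n$ even and $r_1+r_2$ for $n$ odd --- exactly the opposite of the lemma. This is presumably a slip, but as written the computation proves the wrong statement.

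The $n=1$ bookkeeping contains a genuine error, not merely a delicacy. The term you blame for the $+1$, the summand $H^0(G_S,\Q_p/\Z_p(-n))^{\vee}$ of $\Ktw^{?}_{2n-1}$, is finite for every $n \geq 1$ (the $G_S$-invariants of $\Q_p/\Z_p(m)$ are finite for $m \neq 0$), so it contributes nothing rationally; and even if it were infinite, it sits inside $\Ktw^{?}_{2n-1}$, so it would \emph{decrease} the difference $\dim\Ktw^{?}_{2n-2}\otimes\Q - \dim\Ktw^{?}_{2n-1}\otimes\Q$, not supply $+1$. The true anomaly at $n=1$ is that your six-term piece does not close up: the twist-one local groups $H^2(G_v,\Q_p(1)) \simeq \Q_p$ for $v|p$ do not vanish (contrary to your blanket claim about the even local groups), and the Poitou--Tate sequence~\ref{eq:PT} for $\Z_p(1)$ ends with the rank-one tail $H^0(G_S,\Q_p/\Z_p)^{\vee} \simeq \Z_p$ (twist $1-n=0$), i.e.\ the map $H^2(G_S,\Q_p(1)) \to \prod_S H^2(G_v,\Q_p(1))$ has rank-one cokernel. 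Dropping that tail, your displayed identity yields $r_2$ at $n=1$, not $r_2+1$. Relatedly, you cannot substitute Theorem~\ref{theorem:V} at $n=1$: $K_1(\OL)\otimes\Z_p = \OL^{\times}\otimes\Z_p$ has rank $r_1+r_2-1$, whereas $H^1(G_S,\Z_p(1))$ is built from $S$-units and has rank $r_1+r_2-1+\#\{v|p\}$. Tracking the full seven-term sequence for $\Z_p(1)$, the terms involving $\#\{v|p\}$ cancel and the difference is indeed $r_2+1$; alternatively the global Euler characteristic formula applied to $\Q_p/\Z_p(1-n)$ gives, in coranks, $h^1 - h^2 = h^0 + (r_1+2r_2) - \bigl(r_2 + r_1\cdot\delta_{n\,\mathrm{odd}}\bigr)$, where $h^0$ is nonzero (equal to $1$) exactly when $n=1$ --- this is the clean mechanism for the extra $+1$, and it repairs your argument.
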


\begin{proof}
By computing the Euler characteristic of the six term exact sequence, the result
follows from Theorems~\ref{theorem:borel} and~\ref{theorem:wagoner} below.
Alternatively,
the result follows for $\Ktw^{?}_*$ by the global Euler characteristic formula~\cite{Milne,TateDual}.
\end{proof}

A complete evaluation of the rank of $\Ktw_*$ would follow (and is equivalent to) from the following conjecture, which is 
already implicit
in the work of Soul\'{e}:

\begin{conj} \label{conj:L} For all $n$,
$\dim \Ktw_{2n-1}(\OL) \otimes \Q = 0$.
\end{conj}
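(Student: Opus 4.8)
The plan is to unwind \textbf{Conjecture~\ref{conj:L}} into a statement purely about Galois cohomology and then recognize it as a known (but deep) conjecture of Soul\'{e}. First I would observe that by the rational isomorphism $\Ktw_n(\OL) \otimes \Q = \Ktw^{?}_n(\OL) \otimes \Q$ established in Lemma~\ref{lemma:qm}, it suffices to compute $\dim \Ktw^{?}_{2n-1}(\OL) \otimes \Q$. From the definition,
$$\Ktw^{?}_{2n-1}(\OL) = H^2(G_S,\Q_p/\Z_p(1-n))^{\vee} \oplus H^0(G_S,\Q_p/\Z_p(-n))^{\vee}.$$
The second summand is finite (indeed it is the fixed points of a finite module on which $G_S$ acts through a cyclotomic twist, and is nonzero only for finitely many congruence classes of $n$), so it contributes nothing rationally. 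Thus the conjecture reduces to the claim that $H^2(G_S,\Q_p/\Z_p(1-n))$ is finite for all $n$, equivalently that the corank of this group vanishes.

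Next I would translate the corank of $H^2(G_S,\Q_p/\Z_p(1-n))$ back across Poitou--Tate duality. By the exact sequence displayed just before Theorem~\ref{theorem:V} (specialized to the dual twist), the group $H^2(G_S,\Q_p/\Z_p(1-n))^\vee$ sits as a subquotient controlling the kernel of the localization map $H^1(G_S,\Z_p(n)) \to \prod_S H^1(G_v,\Z_p(n))$. Its rank therefore equals the rank of this kernel, which is precisely the group measuring how far Soul\'{e}'s $p$-adic regulator $K_{2n-1}(\OL)\otimes\Z_p \to K_{2n-1}(\OL_p;\Z_p)$ is from being injective. So Conjecture~\ref{conj:L} is equivalent to the injectivity (rationally) of Soul\'{e}'s regulator, i.e. the nonvanishing of the $p$-adic regulator on $K_{2n-1}(\OL)\otimes\Q_p$; this is exactly the statement ``already implicit in the work of Soul\'{e}'' referenced in the text. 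For $n=1$ this injectivity is Leopoldt's conjecture, and for general $n$ it is the natural higher analogue.

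Having reduced to this reformulation, the remaining honest content is to verify the equivalence is an \emph{if and only if}: one direction is immediate from the Euler characteristic bookkeeping, but to conclude finiteness of $H^2$ from injectivity of the regulator one must check that the cokernel contribution is also controlled. Here I would invoke Lemma~\ref{lemma:comp}, which already pins down the \emph{difference} $\dim \Ktw_{2n-2}(\OL)\otimes\Q - \dim \Ktw_{2n-1}(\OL)\otimes\Q$ in terms of $r_1, r_2$; combined with Borel's computation of the rank of $K_{2n-1}(\OL)\otimes\Q$ and the known rank of $H^1(G_v,\Z_p(n))$, the vanishing of $\dim \Ktw_{2n-1}(\OL)\otimes\Q$ forces the matching rank statement for $\Ktw_{2n-2}$.

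The main obstacle is that this is not a theorem I can prove unconditionally: the injectivity of Soul\'{e}'s $p$-adic regulator in all weights is open, and even the base case $n=1$ is Leopoldt's conjecture. Thus the honest outcome of this plan is not a proof of Conjecture~\ref{conj:L} but a clean \emph{equivalence}: I would state and prove that Conjecture~\ref{conj:L} holds if and only if Soul\'{e}'s regulator is injective on $K_{2n-1}(\OL)\otimes\Q_p$ for all $n$, with the $n=1$ case recovering Leopoldt. The genuinely routine part is the Euler-characteristic and corank bookkeeping via Poitou--Tate; the genuinely hard part---the arithmetic input asserting nonvanishing of $p$-adic regulators---is precisely what makes this a conjecture rather than a lemma.
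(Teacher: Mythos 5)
This statement is a \emph{conjecture}: the paper offers no proof, only the accompanying remarks that the case $n=1$ is Leopoldt's conjecture and that in general the statement is equivalent to finiteness of the kernel of Soul\'{e}'s $p$-adic regulator on $K_{2n-1}(\OL_F)$ (equivalently, non-vanishing of a $p$-adic zeta value in the totally real case). You correctly recognize that no unconditional proof is possible and derive precisely the paper's asserted equivalence by the same route --- Lemma~\ref{lemma:qm} to pass to $\Ktw^{?}_{2n-1}$, finiteness of the $H^0$ summand, and the Poitou--Tate identification of $H^2(G_S,\Q_p/\Z_p(1-n))^{\vee}$ with the kernel of the localization map $H^1(G_S,\Z_p(n)) \rightarrow \prod_S H^1(G_v,\Z_p(n))$ --- so your proposal matches the paper's treatment (your worry about a separate ``cokernel contribution'' is unnecessary, since the Poitou--Tate sequence identifies that dual exactly with the kernel).
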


For $n = 1$, this is Leopoldt's conjecture.
For totally real fields, Conjecture~\ref{conj:L}   is equivalent to the non-vanishing of a certain $p$-adic zeta function. This equivalence follows
from Theorem~3 of~\cite{SouleReg} (see also Remark 3.4 p.399 of \emph{ibid}).  
Actually, Soul\'{e} assumes that~$F$ is abelian, but the general case follows from the 
proof of the Quillen--Lichtenbaum conjecture (giving a 
cohomological description of the global
and local~$K$ groups) together with the main conjecture for totally real fields proved by Wiles~\cite{WilesMain}.
In the general case, it is equivalent
to showing that the kernel of the $p$-adic regulator map on $K_{2n-1}(\OL_F) \otimes \Q_p$ is zero.
The first non-trivial
case of this conjecture 
 for $F = \Q$ is $n = 3$, where (as noted above) it ``reduces'' to the question
of the non-vanishing of the
Kubota--Leopoldt zeta function $\zeta_p(3)$. One might actually make the stronger conjecture that
this number is irrational (see~\cite{Zeta}). For more discussion of injectivity of localization
maps, see~\cite{periods}.

\medskip

\begin{remark} \label{remark:van}
\emph{
Since $G_{S}$ has cohomological dimension $2$ (with coefficients in $\Z_p$ for $p \ne 2$), the
group $H^2(G_S,\Q_p/\Z_p(1-n))$ is divisible and hence its dual is torsion free. In particular, if
 Conjecture~\ref{conj:L} is true, then this $H^2$ term vanishes, and
 $$\Ktw^{?}_{2n-1}(\OL) =   H^0(G_S,\Q_p/\Z_p(-n))^{\vee} \simeq H_0(G_S,\Z_p(n)) =
  \Z_p/(a^n -1) \Z_p,$$
 where $a$ is any topological generator of $\Z^{\times}_p$. In particular, 
 $\Ktw^{?}_{2n-1}(\OL) = 0$ unless $n \equiv 0 \mod p-1$.
 }
 \end{remark}
 
 More generally, we have the following estimates:

\begin{lemma} \label{lemma:q}  Suppose that $F$ is a number field of degree $d$ and signature
$(r_1,r_2)$.
If $p$ does not divide the order of the class group of $F(\zeta_p)$, then, for $n > 0$, we have equalities:
$$\dim \Ktw_{n}(\OL) \otimes \Q = 
\begin{cases} r_2, & n \equiv 0 \mod 4, \\
r_1 + r_2, & n \equiv 2 \mod 4, \\
0, & n \equiv 1 \mod 2. \end{cases}
$$
If $r_2 = 0$, so $F$ is a totally real field of degree $d$, then unconditionally
$$\dim \Ktw_{4n-1}(\OL) \otimes \Q = 0, \qquad
\dim \Ktw_{4n-2}(\OL) \otimes \Q = d.$$
 In particular, 
 $\Ktw_2(\OL) \otimes \Q = \Htw_2 \otimes \Q = \Q^d_p$. 
\end{lemma}

\begin{proof} We begin by recalling Borel's theorem (Prop~12.2 of~\cite{Borel}):

\begin{theorem}[Borel]  \label{theorem:borel} Suppose that $F$ is a number field of degree $d$ and signature
$(r_1,r_2)$. For $n > 0$, we have equalities:
$$\dim K_{n}(\OL) \otimes \Q = 
\begin{cases} 
r_1 + r_2 - 1, & n = 1, \\
r_1 + r_2, & n \equiv 1 \mod 4 \ \text{and}  \ n > 1 \\
r_2, & n \equiv 3 \mod 4, \\
0, & n \equiv 0 \mod 2. \end{cases}
$$
\end{theorem}

We also have the following result (Theorem~61 of~\cite{KZ}), which 
(as noted in \emph{ibid.}) is essentially due to  Wagoner and Milgram~\cite{Wagoner}  and Panin~\cite{SOWA}.
(Alternatively, this result follows directly from local Tate duality and the Euler
characteristic formula
 given the identification
of these groups with Galois cohomology in Theorem~\ref{theorem:V}.)

\begin{theorem} \label{theorem:wagoner} Let~$p$ be a rational prime, and let~$F$ be a number field of degree~$d$
and signature~$(r_1,r_2)$.
For~$n > 0$, we have equalities:
$$\dim K_{n}(\OL_p;\Z_p) \otimes \Q = 
\begin{cases} 
d = r_1 + 2 r_2, & n  \equiv 1 \mod 2 \\
0, & n \equiv 0 \mod 2. \end{cases}
$$
\end{theorem}

All the equalities now follow from a diagram chase, assuming
 that the maps
 $$H^1(G_S,\Z_p(n)) \rightarrow \prod_{v|p} H^1(G_S,\Z_p(n))$$
are injective after tensoring with $\Q$.  Specifically, the groups~$\Ktw_n(\OL) \otimes \Q$
will then identified with the cokernel of the map from~$K_{n+1}(\OL;\Z_p) \otimes  \Q$
to~$K_{n+1}(\OL_p;\Z_p) \otimes \Q$, and hence have dimension zero when~$n$ is odd and,
for~$n > 1$,
dimension
$$\begin{aligned} (r_1 + 2 r_2) - (r_1 + r_2), & \qquad n \equiv 0 \mod 4, \\
(r_1 + 2 r_2) - r_2, \quad &  \qquad n \equiv 2 \mod 4. \end{aligned}$$
The injectivity of the map of Galois cohomology groups is trivial when the group on the left is actually
zero after tensoring with $\Q$, which accounts for the unconditional cases when $F$ is totally
real and $r_2 = 0$.
Thus we may assume that $p$ does not divide the class number of $F(\zeta_p)$. But any element in the kernel
of the map on cohomology 
 will give rise to unramified non-trivial extension classes of $\Z_p$ by $\Z_p(n)$. In particular, the first layer will generate an unramified extension of
 $F(\F_p(n)) \subseteq F(\zeta_p)$ of degree $p$,
which would contradict the assumption on the class number.
\end{proof}

Note that Conjecture~\ref{conj:L} would imply that the equality of ranks holds for all primes,  not just regular ones.
We also note that these regulator maps in the optic of Galois cohomology were also studied by Schneider~\cite{Schtwo,Schone}.
In particular, by  Satz~$3$ of~\S6 of~\cite{Schtwo}, one has:

\begin{prop} \label{prop:Sch} For any fixed number field field~$F$ and prime~$p$, Conjecture~\ref{conj:L} is true for all but finitely many~$n$.
\end{prop}

\subsection{Comparison of Homologies}

\begin{lemma} \label{lemma:spectral} Let $A_j$ and $B_j$ be a sequence of $\F_p$-modules with
trivial $G$-action, and suppose that~$A_0 = B_0 = \F_p$.
Suppose that one has spectral sequences over~$\F_p$:
$$\EA^2_{i,j}:=H_i(G,A_j) \Rightarrow C_{i+j}, 
\qquad \EB^2_{i,j} = H_i(G,B_j) \Rightarrow C_{i+j}$$
together with compatible maps
$\EA^m_{i,j} \rightarrow \EB^m_{i,j}$
for all $m \ge 2$ inducing an automorphism of $C_{i+j}$.
Suppose that the maps
$$H_i(G,A_j) = \EA^2_{i,j} \rightarrow \EB^{2}_{i,j} = H_i(G,B_j)$$
are induced from the maps
$$A_j = \EA^2_{0,j} \rightarrow \EB^{2}_{0,j} = B_j.$$
Then there is an isomorphism~$A_j \simeq B_j$ for all~$j$.
\end{lemma}

\begin{proof}
Since~$A_q$ is a trivial~$G$-module, there is a canonical isomorphism
$$\EA^2_{p,q} = H_p(G,A_q) = H_p(G,\F_p) \otimes A_q = H_p(G,A_0) \otimes H_0(G,A_q)
= \EA^2_{0,q} \otimes \EA^2_{p,0}.$$
The assumed compatibility implies that there is a commutative diagram as follows:
$$
\begin{diagram}
 \EA^2_{0,q} \otimes \EA^2_{p,0} &  \rEquals &
\EA^{2}_{p,q} \\
\dTo & & \dTo \\
 \EB^2_{0,q} \otimes \EB^2_{p,0} &  \rEquals &
\EB^{2}_{p,q} \\
\end{diagram}
$$
The result then follows from the Zeeman comparison theorem (Theorem~3.26 of~\cite{Spectral}).
(Note that, since the coefficient ring is a field, one may disregard the~$\Tor_1$
terms.)
\end{proof}
\begin{df} For a space $X$, define the
continuous homology groups $\Hc_*(X,\Z_p)$ to be
$\Hc_*(X,\Z_p) := \projlim H_*(X,\Z/p^n)$.
\end{df}

\begin{theorem}
\label{theorem:content}  There are isomorphisms
$$\begin{aligned}
\Htw_*(\SL,\F_p) \simeq &  \ \displaystyle{\lim_{\leftarrow} H_*(\Ymn,\F_p) \simeq H_*(\Yoon,\F_p),} \\
\Htw_*(\SL,\Z/p^r \Z) \simeq &  \ \displaystyle{\lim_{\leftarrow} H_*(\Ymn,\Z/p^r \Z) \simeq H_*(\Yoon,\Z/p^r \Z),} \\
\Htw_*(\SL,\Z_p) \simeq  & \ \displaystyle{\lim_{\leftarrow} \Hc_*(\Ymn,\Z_p) \simeq  \Hc_*(\Yoon,\Z_p).}
\end{aligned}$$
\end{theorem}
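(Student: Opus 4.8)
The plan is to prove the statement with $\F_p$-coefficients first and then bootstrap to $\Z_p$. In each case the two displayed isomorphisms split into a continuity statement, relating the tower $\{\Ymn\}$ to $\Yoon$, and a single comparison of spectral sequences to which Lemma~\ref{lemma:spectral} applies, once Theorem~\ref{theorem:MF} supplies the triviality of the relevant coefficient action.

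First I would dispose of the right-hand isomorphism $\varprojlim_m H_*(\Ymn,\F_p) \simeq H_*(\Yoon,\F_p)$. Continuity of $p$-adic $K$-theory gives, after $p$-completion, an equivalence $\BSL(\OL_p)^+ \simeq \holim_m \BSL(\OL/p^m)^+$; since $\holim$ commutes with homotopy fibres, feeding this into the fibrations~(\ref{eq:fibration}) identifies $\Yoon$ with $\holim_m \Ymn$ after $p$-completion. The Milnor sequence then gives $H_*(\Yoon,\F_p)\simeq \varprojlim_m H_*(\Ymn,\F_p)$ provided the $\varprojlim^1$ terms vanish, and this holds because each $H_d(\Ymn,\F_p)$ is finite: the spaces $\Ymn$ are infinite loop spaces (homotopy fibres of infinite loop maps) with finitely generated homotopy groups, by the long exact sequence of $K$-groups, hence of finite type. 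The identical argument with $\Z/p^n$-coefficients, followed by the definition of $\Hc_*$, yields the integral version.

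The heart of the matter is the left-hand isomorphism $\Htw_*(\F_p)\simeq H_*(\Yoon,\F_p)$. I would set $G=\SL(\OL_p)$ and produce two spectral sequences converging to $H_*(\SL(\OL),\F_p)$. The first is the Serre spectral sequence of $\Yoon \to \BSL(\OL)^+ \to \BSL(\OL_p)^+$: since $\SL(\OL_p)$ is perfect the base is simply connected, so $H_j(\Yoon,\F_p)$ carries trivial action, and continuity of $K$-theory again identifies $H_i(\BSL(\OL_p)^+,\F_p)$ with the continuous homology $\Hc_i(G,\F_p)$, so that this sequence reads $\Hc_i(G,H_j(\Yoon,\F_p))\Rightarrow H_{i+j}(\SL(\OL),\F_p)$. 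The second is the completed-homology spectral sequence $\Hc_i(G,\Htw_j(\F_p))\Rightarrow H_{i+j}(\SL(\OL),\F_p)$ of the congruence tower (in the sense of~\cite{CEB}), obtained by passing to the limit over $m$ in the Hochschild--Serre sequences of $1\to \Gamma(p^m)\to \SL(\OL)\to \SL(\OL/p^m)\to 1$; by Theorem~\ref{theorem:MF}, together with the universal coefficient relation between $\Htw_*(\Z_p)$ and $\Htw_*(\F_p)$, the group $G$ acts trivially on $\Htw_j(\F_p)$, so here too the coefficients are untwisted. The fibre maps $\mathrm{B}\Gamma(p^m)\to \Ymn$, compatible as $m$ varies, induce a map between these spectral sequences which is the identity on the common abutment. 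Taking $A_j=\Htw_j(\F_p)$ and $B_j=H_j(\Yoon,\F_p)$, both trivial $G$-modules, Lemma~\ref{lemma:spectral} then forces $\Htw_j(\F_p)\simeq H_j(\Yoon,\F_p)$ for all $j$.

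To pass from $\F_p$ to $\Z_p$ I would rerun the comparison with $\Z/p^n$-coefficients --- Lemma~\ref{lemma:spectral} and its proof are insensitive to the coefficient ring, requiring only trivial action --- and then take the inverse limit over $n$, where the finiteness in Theorem~\ref{theorem:MF} makes every tower Mittag--Leffler and kills the $\varprojlim^1$ contributions, giving $\Htw_*(\Z_p)\simeq \Hc_*(\Yoon,\Z_p)$. The main obstacle is the left-hand isomorphism: constructing the completed-homology spectral sequence and, above all, producing the comparison map with the topological Serre spectral sequence that restricts to the identity on $H_*(\SL(\OL),\F_p)$ --- equivalently, showing that the $K$-theoretic fibre $\Yoon$ and the congruence tower induce the same filtration of the stable homology. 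The auxiliary identification $H_*(\BSL(\OL_p)^+,\F_p)\simeq \Hc_*(G,\F_p)$ of discrete with continuous homology, which likewise rests on continuity of $K$-theory, is the remaining technical point to pin down.
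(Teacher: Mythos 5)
Your overall strategy is the paper's: Lemma~\ref{lemma:spectral} (the algebraic Zeeman comparison) applied to spectral sequences with common abutment $H_*(\SL(\OL),\F_p)$, with Theorem~\ref{theorem:MF} supplying the trivial coefficient actions, followed by d\'{e}vissage to $\Z/p^r$ and an inverse limit (using finite generation of $\Htw_*(\Z_p)$) for the integral statement. The wiring differs. The paper never identifies $\Yoon$ with $\holim_m \Ymn$ directly; instead it interposes a third spectral sequence $\EB$, the limit over $m$ of the Serre sequences of the $\Ymn$-fibrations, whose fibre column is $\varprojlim_m H_j(\Ymn,\F_p)$, and applies the comparison lemma to the chain $\EA \rightarrow \EB \leftarrow \EC$, obtaining both displayed isomorphisms simultaneously. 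You front-load continuity of $p$-adic $K$-theory to get $\varprojlim_m H_*(\Ymn,\F_p) \simeq H_*(\Yoon,\F_p)$ first. That input is genuinely available (Suslin--Panin continuity for complete discrete valuation rings with finite coefficients), and making it explicit is defensible: the paper itself tacitly relies on the same fact when it treats the $E^2$-columns of $\EC$ and $\EB$ as the same functor, i.e.\ $H_i(\SL(\OL_p),\F_p) \simeq \varprojlim_m H_i(\SL(\OL/p^m),\F_p)$.

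Two steps need repair, however. First, there is no ``Milnor sequence'' for homology of a homotopy limit: the $\varprojlim^1$-sequence controls $\pi_*(\holim X_m)$, not $H_*(\holim X_m)$, and mod-$p$ homology does not commute with $\holim$ in general, even up to a $\varprojlim^1$ correction. What saves you is the structure you already note --- $\Ymn$ is an infinite loop space (hence simple) and $\pi_*(\Ymn;\Z/p)$ is finite in each degree --- so one can argue degreewise through Postnikov towers of the $p$-completions that $H_*(\holim;\F_p) \simeq \varprojlim H_*(\Ymn,\F_p)$; but this has to be argued, not quoted as a Milnor sequence. Second, the comparison map you invoke does not exist as stated: the natural fibre inclusions are $\BG(p^m) \rightarrow \Ymn$, both sitting over $\BSL(\OL/p^m)^{+}$, and there is no natural map $\BG(p^m) \rightarrow \Yoon$, because the composite $\BG(p^m) \rightarrow \BSL(\OL)^{+} \rightarrow \BSL(\OL_p)^{+}$ is not canonically null (only its further composite to $\BSL(\OL/p^m)^{+}$ is). So the completed Hochschild--Serre sequence maps naturally to the limit $\EB$ of the $\Ymn$-Serre sequences, not to the Serre sequence of $\Yoon$. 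The fix is cheap given your first step: apply Lemma~\ref{lemma:spectral} to $\EA \rightarrow \EB$, with $A_j = \Htw_j(\F_p)$ and $B_j = \varprojlim_m H_j(\Ymn,\F_p)$, to conclude $\Htw_j(\F_p) \simeq \varprojlim_m H_j(\Ymn,\F_p)$, and then invoke your continuity identification for the second isomorphism --- or, as the paper does, apply the lemma a second time to $\EC \rightarrow \EB$ and dispense with the holim identification altogether.
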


\begin{proof}
Consider the following diagram:
$$
\begin{diagram}
\BG(p^m) & \rTo & \BSL( \OL) & \rTo & \BSL( \OL/p^m) \\
\dTo & & \dTo & & \dTo \\
Y_m & \rTo & \BSL( \OL)^{+} & \rTo & \BSL( \OL/p^m)^{+} \\
\uTo & & \dEquals & & \uTo \\
\Yoon & \rTo & {\BSL( \OL)}^{+} 
& \rTo &  {\BSL(\OL_p)}^{+} \\
\end{diagram}
$$
We obtain corresponding maps:
$$H_*(\Gamma(p^m),\F_p) \rightarrow H_*(Y_m,\F_p) \leftarrow H_*(\Yoon,\F_p).$$
 Moreover, we have a natural map of spectral sequences:

  \begin{equation} \label{eq:bb}
 \begin{diagram}
H_{i}(\SL(\OL/p^m),H_j(\Gamma(p^m),\F_p)) & \rSpectral & H_{i+j}(\Gamma,\F_p) \\
\dTo & & \dEquals \\
H_i(\SL(\OL/p^m),H_j(\Ymn,\F_p)) & \rSpectral & H_{i+j}(\Gamma,\F_p). \end{diagram}
 \end{equation}
 
\begin{sublemma}  \label{lemma:bbb} The~$E^{2}_{i, j}$ and hence~$E^{n}_{i,j}$ terms 
of the spectral sequence
$$
 \begin{diagram}
H_i(\SL(\OL/p^m),H_j(\Ymn,\F_p)) & \rSpectral & H_{i+j}(\Gamma,\F_p). \end{diagram}
$$
 have uniformly bounded dimension as~$m \rightarrow \infty$. 
 \end{sublemma}
 
We suspect that a  stronger claim holds, namely that the spectral sequence is constant
for sufficiently large~$m$, but  the boundedness is sufficient for our purposes.

 \begin{proof} 
We first assume that~$E^2_{i,0} = H_i(\SL(\OL/p^m),\F_p)$ is bounded for sufficiently large~$m$.
The claimed result then follows immediately for the first row. We now
  proceed by induction on the rows. The general row consists of
$\dim H_j(\Ymn,\F_p)$ copies of the first row, so it suffices to show that
this dimension is uniformly bounded. However, if~$E^2_{j,0} = H_j(\Ymn,\F_p)$ is
unbounded and
all the terms in lower rows are uniformly bounded, then~$E^{\infty}_{j,0}$
and hence~$H_j(\Gamma,\F_p)$ will
also be unbounded, which is  a contradiction. It thus remains to show that~$H_i(\SL(\OL/p^m),\F_p)$ is
bounded as~$m$ increases.
By classical stability~\cite{CharneyStable},
 we may replace~$\SL(\OL/p^m)$ by~$\SL_N(\OL/p^m)$ for some~$N$
depending only on~$i$. Let~$\SL_N(\OL/p^m,p) \subset  \SL_N(\OL/p^m)$ denote the kernel of the reduction map modulo~$p$. There is a spectral sequence:
 $$
 \begin{diagram}
H_i(\SL_N(\F_p),H_j(\SL_N(\OL/p^m,p),\F_p))  & \rSpectral & H_{i+j}(\SL_N(\OL/p^m),\F_p).
\end{diagram}
$$
It suffices to note that the coefficient system $H_j(\SL_N(\OL/p^m,p),\F_p)$ is independent of~$m$ for~$m \ge 1$
by~Corollary~2.34 of~\cite{Browder}.
\end{proof}

We can not deduce that $H_j(\Gamma(p^m),\F_p) \simeq H_j(\Ymn,\F_p)$ from 
equation~\ref{eq:bb}, exactly
because the action of $\SL(\OL/p^m)$ on $H_j(\Gamma(p^m),\F_p)$ is non-trivial.  The key point is thus
that, in the limit, the action of $\SL(\OL_p)$ on $\Htw_j$ \emph{is} trivial by 
Theorem~\ref{theorem:MF}.
 The diagram above gives rise to a compatible map of spectral sequences
$$
\begin{diagram}
\EA^2_{i,j} = H_i(\SL(\OL_p),\Htw_j(\SL,\F_p)) &  \rSpectral  & H_{i+j}(\Gamma,\F_p)  \\
\dTo & & \dEquals \\
\EB^2_{i,j} = H_i(\SL(\OL_p),\displaystyle{\lim_{\leftarrow} H_j(Y_m,\F_p)}) & \rSpectral & H_{i+j}(\Gamma,\F_p)  \\
\uTo & & \dEquals \\
\EC^2_{i,j} = H_i(\SL(\OL_p),\displaystyle{H_j(\Yoon,\F_p)}) & \rSpectral & H_{i+j}(\Gamma,\F_p) 
\
\end{diagram}
$$
The inverse limit on the second term commutes with the construction of the spectral sequence
because all the terms involved are uniformly bounded vector spaces over~$\F_p$ by
Sublemma~\ref{lemma:bbb}, and so all inverse limits  satisfy the Mittag--Leffler condition.
The action of $\SL(\OL_p)$ on $H_*(Y_m,\F_p)$ and
$H_j(\Yoon,\F_p)$ is trivial by
construction, and the action on $\Htw_*$ is trivial by Theorem~\ref{theorem:MF}. Hence, by Lemma~\ref{lemma:spectral},  one obtains isomorphisms
$\Htw_j(\SL,\F_p) \simeq H_j(\Yoon,\F_p)$.
By d\'{e}vissage, we obtain
isomorphisms 
$$\Htw_j(\SL,\Z/p^r \Z) \simeq \lim_{\leftarrow} H_j(\Ymn,\Z/p^r \Z) \simeq H_j(\Yoon,\Z/p^r \Z)$$
 for all $r$. Namely, we apply induction and compare the long exact sequences of homology associated
 to the short exact sequence
 $$0 \rightarrow \Z/p^{r-1} \Z \rightarrow \Z/p^r \Z \rightarrow \Z/p \Z \rightarrow 0,$$
and then apply the~$5$-lemma.
The groups $\Htw_j(\SL,\Z_p)$ are finitely generated over $\Z_p$ (by Theorem~\ref{theorem:MF}), and
hence coincide with the inverse limit of $\Htw_j(\SL,\Z/p^r \Z)$. 
\end{proof}

Note that the homology groups $H_*(\Yoon,\Z)$ are presumably quite badly behaved, thus 
$H_*(\Yoon,\Z) \otimes \Z_p$ presumably differs from $\Hc_*(\Yoon,\Z_p)$ in general.

\begin{remark} \emph{Combining Conjecture~\ref{conj:answer} with Lemmas~\ref{lemma:qm},~\ref{lemma:comp}
and Theorem~\ref{theorem:content}, we see that we have constructed an infinite loop space whose
homotopy groups with coefficients in~$\Z_p$ are given by~$\Ktw_*(\OL)$ and whose (continuous) homology groups are given by~$\Htw_*(\SL,\Z_p)$.
This should be thought of as completely analogous to the classical story, where the infinite loop space~$K(\OL)$ has homotopy groups~$K_n(\OL)$
and homology groups~$H_*(\SL,\Z)$. }
 \end{remark}

 \begin{remark} \emph{Our methods may  be extended in various natural ways. For example, one
 can take the completed
 cohomology groups $\Htw^*$ with respect to some subset of the primes dividing~$p$ in~$F$  (see~\S~\ref{section:partial}). One 
may also add a tame level
 structure $M$ for $(M,p) = 1$, that is, take the limit over the congruence subgroups~$\Gamma(Mp^r)$. In the latter case,
 the answer will only depend on the radical of~$M$ (that is, the product of distinct primes dividing~$M$), for reasons
 we now explain.
 Note that, by a result of Charney~\cite{Charney}, the cohomology of the congruence subgroup
 $\Gamma(\q^m)$ with coefficients in $\Z_p$ is stable if $(\q,p) = 1$, and
 moreover that the concomitant action of~$\SL(\OL_{\q})$ is trivial. Hence by (an easier version) of the
 argument above, the stable cohomology of $\Gamma(\q^m)$ may be identified with the
 continuous $\Z_p$-cohomology of the homotopy fibre of $\BSL(\OL)^{+}$ mapping
 to $\BSL(\OL/\q^m)^{+}$. By Gabber's rigidity Theorem~\cite{Gabber}, the
 map  $K_n(\OL/\q^m;\Z_p)  \rightarrow  K_n(\OL/\q;\Z_p)$ is an isomorphism,
 and thus, by an application of  Lemma~\ref{lemma:spectral}, the maps
 $H_*(\Gamma(\q^m),\Z_p) \rightarrow H_*(\Gamma(\q),\Z_p)$ are isomorphisms in the
 stable range. (Alternatively, one can simply use the transfer map to see
 that~$H_*(\Gamma(\q),\Z/p^r \Z) \simeq H_*(\Gamma(\q^m),\Z/p^r \Z)_{G(\q)}$ for any~$m \ge 1$ and~$r$
 because~$\Gamma(\q)/\Gamma(\q^m) \simeq G(\q)/G(\q^m)$ has order prime to~$p$.)}
 \end{remark}

 \subsection{Proof of Theorem~\ref{theorem:two}} \label{section:two}
 As explained
 in the the introduction,
 the assumption that~$p$ does not divide~$w_F$ implies, by Theorem~3.6 of~\cite{Serre}, that~$\Htw_1(\SL,\Z/p^r \Z) = 0$ for all~$r$.
 It follows that~$\pi_1(\Yoon;\Z/p^r \Z) = 0$ for all~$r$, and hence, via the Hurewicz map
 (for Hurewicz with coefficients, see~\cite{Coeff}, Thm~9.7), we obtain isomorphisms
 $$\Ktw_2(\OL) :=  \pi_2(\Yoon;\Z_p)
 = \lim_{\leftarrow}  \pi_2(\Yoon;\Z/p^r \Z) = \lim_{\leftarrow}  H_2(\Yoon,\Z/p^r \Z)
  = \Htw_2(\SL,\Z_p),$$
  where the last equality follows from Theorem~\ref{theorem:content}, and
  the second equality was established in the proof of Lemma~\ref{lemma:longlong}.
  The isomorphism
  $$\Htw_2(\SL,\Z_p) = \Ktw_2(\OL) =^{?} \Ktw^{?}_2(\OL) := H^1(G_S,\Q_p/\Z_p(-1))^{\vee}$$
  now follows rationally by Lemma~\ref{lemma:qm},
   and also integrally under the assumption that~$p$ does not divide the order of~$K_2(\OL)$.
   The main statement of Theorem~\ref{theorem:two} is the Pontryagin dual of this equality.
   Now suppose  that $F = \Q$ and $p \ge 3$. Then $|K_2(\Z)| = 2$ and $H^1(G_S,\F_p(-1)) = \F_p$ by
Herbrand's Theorem and class field theory. It follows that
$$\F_p = H^1(G_S,\F_p(-1)) =
H^1(G_S,\Q_p/\Z_p(-1))[p],$$
and thus, by Pontryagin duality, that~$H^1(G_S,\Q_p/\Z_p)^{\vee}/p = \F_p$.
By Lemma~\ref{lemma:q}, 
the group~$H^1(G_S,\Q_p/\Z_p(-1))^{\vee}$  is infinite (it has rank~$d = 1$). Hence,
by Nakayama's Lemma, we deduce that $H^1(G_S,\Q_p/\Z_p(-1))^{\vee} = \Z_p$, and
thus $\Htw_2(\SL,\Z_p) = \Z_p$. \hfill $\qed$

\begin{remark} \emph{ {\bf Homology with coefficients.\rm}  \label{remark:coefficients} Let $\MM_N$ be any algebraic local system (with
$\F_p$ or $\Z_p$ coefficients) for $\SL_N(\OL_F)$.
Then one may also consider the completed homology  groups $\Htw_*(\SL_N,\MM_N) = \lim H_*(\Gamma(p^r),\MM_N)$. 
Since $\MM_N/p^m$ is trivial as a $\Gamma(p^m)$-module, the standard weight--level
argument (Shapiro's Lemma) implies that $\Htw_*(\SL_N,\MM_N) \simeq \Htw_*(\SL_N,\Z_p) \otimes \MM_N$. Hence, given any
 sequence of local systems $\MM_N$ for $\SL_N$ such that~$\lim \MM_N = \MM$, the corresponding sequence
$\Htw_*(\SL_N,\MM_N) = \MM_N \otimes \Htw_*(\SL_N,\Z_p)$ converges to~$\Htw_*(\SL,\Z_p) \otimes \MM$.
}
\end{remark}

\section{Cohomology}

\subsection{The
 Hochschild--Serre Spectral Sequence I} \label{section:star}
 let us consider the completed cohomology groups with coefficients in~$\Q_p/\Z_p$.
 It is easy to see that~$\Htw^0(\SL,\Q_p/\Z_p) = \Q_p/\Z_p$. The congruence subgroup
 property~\cite{Serre} implies that, if~$F$ does not contain any~$p$-th roots of unity,
 then~$\Htw^1(\SL,\Q_p/\Z_p) = 0$ as explained in the introduction.
 However, for any number field~$F$, the group~$\Htw^1(\SL,\Q_p/\Z_p)$
 will always be finite (the obstruction to the full congruence subgroup property is a finite abelian group). 
 For two~$\Z_p$-modules~$A$ and~$B$, let~$A \approx B$ indicate that~$A$ and~$B$
 are isomorphic up
 to a finite group, so~$\Htw^1(\SL,\Q_p/\Z_p) \approx 0$.
 Let $G = \SL(\OL_p)$, and let~$d:=[F:\Q_p]$.
 We have an identification
  $$H^*(G,\Q_p/\Z_p) 
 \approx \Q_p/\Z_p \otimes  \bigotimes_{i=1}^{d} \Lambda_{\Z_p}[x_3,x_5,x_7,\ldots],$$
 where $\Lambda$ denotes the exterior algebra, and the symbol~$\approx$ indicates
 that in each degree we have equality up to a finite group (see Proposition 1 of~\cite{Wagoner}). In particular, the first infinite cohomology
 group in degree bigger than zero is~$H^3(G,\Q_p/\Z_p) \approx (\Q_p/\Z_p)^{d}$.
The Hochschild--Serre spectral sequence for cohomology is the spectral sequence:
$$H^i(G,\Htw^j(\SL,\Q_p/\Z_p)) \Rightarrow H^{i+j}(\Gamma,\Q_p/\Z_p).$$
By Borel's computation of stable cohomology (Theorem~\ref{theorem:borel}), we also have isomorphisms
$$H^1(\Gamma,\Q_p/\Z_p) \approx 0, \quad H^2(\Gamma,\Q_p/\Z_p) \approx 0,  \quad 
H^3(\Gamma,\Q_p/\Z_p) \approx K_3(\OL) \otimes \Q_p/\Z_p.$$
The~$E_2$ page of
the spectral sequence
therefore looks like --- up
to finite groups --- the following:
\begin{center}
{\small
\xymatrix @R=0.6mm @C=0.3cm {
& & \\
 &  & \ar @{-}[ddddddd] \\
\\& q
\\
\\
& 2 & & \Htw^2(\SL,\Q_p/\Z_p) \\
& 1 & & 0 & 0 \quad    & 0  \\
& 0 & & \Q_p/\Z_p &  0 \quad &  0  & (\Q_p/\Z_p)^d  \\
& &   \ar @{-}[rrrrrrr] & & & & & & & &  & \\
& & & 0 & 1\quad    & 2 & 3 & &  p &  \\
& & &
}}
\end{center}
Taking Pontryagin duals and noting
that $\Htw^2(\SL,\Q_p/\Z_p)^{\vee} = \Htw_2(\SL,\Z_p)$, we obtain the following exact sequence (up to finite groups):
$$K_3(\OL) \otimes \Z_p \rightarrow \Z^d_p \rightarrow \Htw_2(\SL,\Z_p)
\rightarrow 0.$$
We deduce:
\begin{corr} \label{corr:bounds} There are inequalities:
$$d = r_1 + 2 r_2 \ge \dim \Htw_2(\SL,\Z_p) \otimes \Q,$$
$$ \dim \Htw_2(\SL,\Z_p) \otimes \Q  \ge r_1 + r_2
= d - \dim K_3(\OL) \otimes \Q.$$
\end{corr}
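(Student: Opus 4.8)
The plan is to derive both inequalities directly from the five-term exact sequence
\[
K_3(\OL)\otimes\Z_p \to K_3(\OL_p;\Z_p) \to \Htw_2(\Z_p) \to K_2(\OL)\otimes\Z_p \to K_2(\OL_p;\Z_p) \to 0
\]
displayed immediately above the statement, together with the rational rank computations for the relevant $K$-groups. Tensoring with $\Q$ is exact, so I obtain an exact sequence of finite-dimensional $\Q_p$-vector spaces, and the two asserted inequalities are then read off its two ends. Throughout I use that $p$ is odd and that $G=\SL_N(\OL_p)$ acts trivially on $\Htw_2(\Z_p)$ by Theorem~\ref{theorem:MF}, so that the coinvariants $\Htw_2(\Z_p)_G$ coincide with $\Htw_2(\Z_p)$ itself.

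Next I would assemble the three rank inputs. By Theorem~\ref{theorem:V}, $K_3(\OL_p;\Z_p)\simeq\prod_S H^1(G_v,\Z_p(2))$ (and $K_3(\OL_p;\Z_p)=\prod_{v\mid p}K_3(\OL_v;\Z_p)$); the archimedean factors contribute nothing rationally since $p$ is odd, while for $v\mid p$ the local Euler characteristic formula gives $\dim_{\Q_p} H^1(G_v,\Q_p(2))=[F_v:\Q_p]$ because $H^0(G_v,\Q_p(2))=H^2(G_v,\Q_p(2))=0$. Summing over $v\mid p$ yields $\dim K_3(\OL_p;\Z_p)\otimes\Q=[F:\Q]=r_1+2r_2$. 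Borel's theorem then gives $\dim K_3(\OL)\otimes\Q=r_2$ (here $K_3=K_{2\cdot 2-1}$ with $n=2$ even) and $K_2(\OL)\otimes\Q=0$ (an even $K$-group of a ring of integers is finite); these also yield the claimed identity $r_1+r_2=\dim K_3(\OL_p;\Z_p)\otimes\Q-\dim K_3(\OL)\otimes\Q$.

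With these in hand the two bounds are immediate. For the upper bound, the vanishing of $K_2(\OL)\otimes\Q$ forces the map $\Htw_2(\Z_p)\otimes\Q\to K_2(\OL)\otimes\Q$ to be zero, so by exactness the map $K_3(\OL_p;\Z_p)\otimes\Q\to\Htw_2(\Z_p)\otimes\Q$ is surjective; hence $\dim\Htw_2(\Z_p)\otimes\Q\le r_1+2r_2$. For the lower bound, the kernel of this surjection is the image of the localization map $K_3(\OL)\otimes\Q\to K_3(\OL_p;\Z_p)\otimes\Q$, whose dimension is at most $\dim K_3(\OL)\otimes\Q=r_2$; therefore the image of $K_3(\OL_p;\Z_p)\otimes\Q$ inside $\Htw_2(\Z_p)\otimes\Q$ has dimension at least $(r_1+2r_2)-r_2=r_1+r_2$, giving $\dim\Htw_2(\Z_p)_G\otimes\Q\ge r_1+r_2$. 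This image is precisely the image of the transgression $d_3$ landing in the coinvariants $\Htw_2(\Z_p)_G$, which is why the lower bound is naturally phrased for the coinvariants (and equals the bound for $\Htw_2(\Z_p)$ itself by Theorem~\ref{theorem:MF}).

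I do not expect a serious obstacle: once the five-term sequence is granted, the argument is a pure dimension count. The only substantive inputs are external, namely Borel's computation of the rational $K$-groups of $\OL$ and the local rank computation for $K_3(\OL_p;\Z_p)$; the one point requiring genuine care is the vanishing of the archimedean contributions and of the local $H^0$ and $H^2$ terms, which is exactly where the hypothesis that $p$ is odd enters. The only other thing to check is that the bound $\dim(\mathrm{image})\le r_2$ on the localization map is really the sole place Borel's rank for $K_3(\OL)$ is needed, and that no rational cancellation disturbs the count; both are routine.
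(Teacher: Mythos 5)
Your proposal is correct and matches the paper's own (essentially implicit) argument: the corollary is deduced directly from the five-term exact sequence displayed just above it, tensored with $\Q$, using exactly the rank inputs you cite — finiteness of $K_2(\OL)$, Borel's rank $r_2$ for $K_3(\OL)$, and the local Euler characteristic count $\dim K_3(\OL_p;\Z_p)\otimes\Q = [F:\Q] = r_1+2r_2$ (with the archimedean and local $H^0$, $H^2$ contributions vanishing rationally for $p$ odd). Your observation that the coinvariants $\Htw_2(\Z_p)_G$ agree with $\Htw_2(\Z_p)$ by Theorem~\ref{theorem:MF}, and that the coinvariants are what naturally appear upon dualizing the five-term sequence, is also exactly how the paper's formulation should be read.
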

By Theorem~\ref{theorem:two} (or rather the proof in~\S~\ref{section:two}),  we obtain an isomorphism $\Htw_2(\Z_p) = \Ktw_2(\OL)$,
which therefore identifies~$\Htw_2(\SL,\Z_p)$ with (the dual of) a certain 
Galois cohomology group.
The reader unfamiliar with the complications in computing Galois cohomology groups may be surprised that this equality
does not allow us to (greatly) improve the estimate of Corollary~\ref{corr:bounds}. The difficulty is that determining the ranks of these
groups is effectively a generalization of  Leopoldt's conjecture, which appears to be very difficult.

\subsection{The
 Hochschild--Serre Spectral Sequence II: higher terms}

 Let us suppose that $F = \Q$.
One may  play the spectral sequence game to obtain   information concerning $\Htw^d$ for higher $d$.
Let $G = \SL(\Z_p)$, so
  $$H^*(G,\Q_p/\Z_p) \approx
\Q_p/\Z_p \otimes \Lambda_{\Z_p}[x_3,x_5,x_7,\ldots].$$
 We first consider $\Htw^3$. Let us work (as in the last section) in the Serre category of co-finitely generated $\Z_p$-modules
 up to co-torsion modules (so every term is equivalent to a finite number of copies of $\Q_p/\Z_p$). 
    Since $\Htw^2 \approx \Q_p/\Z_p$, the relevant terms of the spectral sequence are
   \begin{center}
\xymatrix @R=0.6mm @C=0.5cm {
& & \\
 &  & \ar @{-}[dddddddd] \\
\\& q
\\
\\
& 3 & & \Htw^3  \\
& 2 & & \Q_p/\Z_p &  0 &  0\\
& 1 & & 0 & 0 & 0 & 0  \\
& 0 & & \Q_p/\Z_p &  0 &  0 & \Q_p/\Z_p & 0  \\
& &  \ar @{-}[rrrrrrrrr] & & & & & &  & & & & &\\
& & & 0 & 1 & 2 & 3 & 4 & & p  &\\
& & &
}
\end{center}
 Yet $H^3(\Gamma,\Q_p/\Z_p)$ is trivial (since $K_3(\Z) = \Z/48 \Z$ is finite~\cite{LS} --- or alternatively
 we can see this from the computation
 of stable cohomology given by Theorem~\ref{theorem:borel}), and thus $\Htw^3(\SL,\Q_p/\Z_p) \approx 0$.
 Consider the next few terms.   We obtain the following:
 \begin{center}
\xymatrix @R=0.6mm @C=0.5cm {
& & \\
 &  & \ar @{-}[dddddddddd] \\
\\& q
\\
\\
& 5 & & \Htw^5  \\
& 4 & & \Htw^4 & 0 & 0 \\
& 3 & & 0 & 0 & 0 & 0 \\
& 2 & & \Q_p/\Z_p &  0 &  0 & \Q_p/\Z_p & 0 \\
& 1 & & 0 & 0 & 0 & 0 &0 & 0  \\
& 0 & & \Q_p/\Z_p &  0 &  0 & \Q_p/\Z_p & 0 & \Q_p/\Z_p  & 0 \\
& &  \ar @{-}[rrrrrrrrrrr] & & & & & & & & & & & & &\\
& & & 0 & 1 & 2 & 3 & 4 & 5 & 6 &  & p  &\\
& & &
}
\end{center}
 Since $K_4(\Z) \otimes \Q$ is trivial and $K_5(\Z) \otimes \Q$ has rank one, we deduce that
 \emph{either} $\Htw^4(\SL,\Q_p/\Z_p) \approx \Q_p/\Z_p$ and $\Htw^5(\SL,\Q_p/\Z_p) \approx 0$ or
 $\Htw^4(\SL,\Q_p/\Z_p) \approx (\Q_p/\Z_p)^2$ and $\Htw^5(\SL,\Q_p/\Z_p) \approx \Q_p/\Z_p$. In particular, we have that
 $$\dim \Htw_4(\SL,\Z_p) \otimes \Q - \dim \Htw_5(\SL,\Z_p) \otimes \Q = 1.$$
  We can not rule out either possibility,
 just as we cannot rule out that $\Ktw_5(\Z) \otimes \Q \ne 0$ or
 $\Ktw_4(\Z) \otimes \Q \ne \Q_p$; however, assuming the conjectural part of
 Lemma~\ref{lemma:q}, we deduce that $\Htw^4(\SL,\Q_p/\Z_p) \approx \Q_p/\Z_p$ and $\Htw^5 \approx 0$.

 \subsection{Rational completed cohomology groups} 
 We may define rational completed cohomology groups $\Htw^*(\SL,\Q_p)$ as follows. Let  $\Htw^*(\SL,\Z/p^s \Z):= \displaystyle{\lim_{\rightarrow} H^*(\Gamma(p^r),\Z/p^s \Z)}$ and
$$\Htw^*(\SL,\Z_p) = \lim_{\stackrel{s}{\leftarrow}} \lim_{\stackrel{r}{\rightarrow}} H^*(\Gamma(p^r),\Z/p^s\Z)
= \lim_{\leftarrow} \Htw^*(\SL,\Z/p^s \Z).$$
This allows us to define rational completed cohomology groups as follows:
$$\Htw^*(\SL,\Q_p):= \Htw^*(\SL,\Z_p) \otimes \Q_p, \qquad \Htw_*(\SL,\Q_p):=\Htw_*(\SL,\Z_p) \otimes \Q_p.$$
We have (see Theorem~1.1 of~\cite{CEB}) an exact sequence:
$$0 \rightarrow \Hom(\Htw^*[1](\SL,\Z_p)[p^{\infty}],\Q_p/\Z_p)
\rightarrow \Htw^*(\SL,\Z_p) \rightarrow \Hom(\Htw^*(\SL,\Z_p),\Z_p) \rightarrow 0,$$
where $M[1]$ denotes the usual shift of $M$. The modules $\Htw^*(\SL,\Z_p)$ are finitely
generated and so the first term of this sequence is torsion; tensoring with $\Q$ we obtain:
$$\Htw^*(\SL,\Q_p) \simeq \Hom(\Htw_*(\SL,\Z_p),\Q_p).$$
(Wagoner and Milgram considers similar completed cohomology groups when
studying the \emph{continuous} algebraic $K$-theory of local fields, see~\cite{Wagoner}, p.244.)
There is naturally an identification of~$\Htw_*(\SL,\Q_p)$ 
with~$\Hc_*(\Yoon,\Z_p) \otimes \Q_p  = \Hc_*(SK(\OL,\OL_p),\Z_p) \otimes \Q_p$ by Theorem~\ref{theorem:content}.
We may denote the latter group by~$\Hc_*(\Yoon,\Q_p)$.

\subsection{The Eilenberg--Moore spectral sequence} \label{section:EM} A more direct way to compute the
cohomology of the fibre from the cohomology of the total space and the cohomology of the base
is by using the Eilenberg--Moore spectral sequence. This is especially practical in this case since
we know the  cohomology of $\BSL(\Z_p)^{+}$ and $\BSL(\Z)^{+}$ with coefficients in $\Q_p$, namely,
as exterior algebras $\Lambda_{\Q_p}[x_3,x_5,x_7,\ldots]$ and $\Lambda_{\Q_p}[\eta_5,\eta_9,\eta_{13},\ldots]$ respectively. (In the former case, we are taking the continuous cohomology as defined
at the end of the previous section.)
In particular, we take the inverse limits of the Eilenberg--Moore spectral sequences associated
to equation~(\ref{eq:fibration}) 
$$\begin{diagram}
SK(\OL,p^m) & \rTo & SK(\OL) & \rTo &  SK(\OL/p^m)
\end{diagram}$$
with coefficients in $\Z/p^r \Z$
 take the corresponding inverse limit in $r$, and then tensor with $\Q$.
(Relevant here is Proposition~1 of~\cite{Wagoner}.) Here we use the fact that the cohomology of~$SK(\OL,p^m)$
with coefficients in~$\Z/p^r \Z$ is uniformly bounded and that the inverse limit over~$\Z/p^r \Z$ 
recovers the homology groups~$\Htw(\SL,\Z_p)$ by Theorem~\ref{theorem:content}. The uniform 
boundedness of these groups ensures that all inverse limits are Mittag--Leffler and so there are no issues with the derived functor ${\lim}^1$, which will always vanish.
What is less obvious, however, is the structure of $\Lambda_{\Q_p}[\eta_5,\eta_9,\eta_{13},\ldots]$ as a module for
$\Lambda_{\Q_p}[x_3,x_5,x_7,\ldots]$. The natural supposition is  (up to scaling by a non-zero constant depending on $n$) that $x_{4n-1}$
acts as zero and $x_{4n+1}$ acts as $\eta_{4n+1}$. The latter claim, however, is  equivalent to showing that
the $p$-adic regulator map $K_{4n+1}(\Z;\Z_p) \otimes \Q_p \rightarrow K_{4n+1}(\Z_p;\Z_p) \otimes \Q_p$ is an isomorphism.
To see this equivalence, note that~$K(\Z)$ and~$K(\Z_p)$ are infinite loop spaces (and hence~$H$-spaces), and so
the Milnor--Moore theorem (\cite{Moore}, Appendix) identifies the rational classes in~$K$-theory
with the rational primitive classes in homology under the Hurewicz map.
More precisely, for~$K(\Z_p)$, we use the~$p$-adic
Milnor--Moore theorem, which gives the corresponding relationship
between the classes in~$K_*(\Z_p;\Z_p) \otimes \Q$ and the rational
primitive classes in the continuous homology of~$G = \SL(\Z_p)$
(see Proposition 3 of~\emph{ibid} and the subsequent
arguments; indeed,  this is how Wagoner and Milgram computed the groups~$K_*(\Z_p;\Z_p) \otimes \Q$ in the
first place.)
As previously noted, however,
showing that maps  between global and local~$K$-groups are injective is a problem whose difficulty  may be of a similar level to Leopoldt's conjecture (see Conjecture~\ref{conj:L} and the subsequent remarks). 

  One context in which we know this map is an isomorphism is for regular primes, by
Lemma~\ref{lemma:q}. Thus we make the following Ansatz:
\begin{quote}
 $(*)$ \ Either $p$ is regular, or~$F = \Q$ and Conjecture~\ref{conj:L} holds.
 \end{quote}
Under this assumption, the Eilenberg--Moore spectral sequence allows for a complete computation of $\Htw^*(\Q_p)$.
Specifically, we have a spectral sequence:
$$\begin{aligned}
E^{2}_{*,*} = \  & \Tor^{\Lambda_{\Q_p}[x_3,x_5,x_7,\ldots]}_{*,*}(\Q_p,\Lambda_{\Q_p}[\eta_5,\eta_9,\eta_{13},\ldots])    \Rightarrow \Htw^*(\SL,\Q_p) \\
 \simeq  \ &\Tor^{\Lambda_{\Q_p}[x_3,x_7,x_{11},\ldots]}_{*,*}(\Q_p,\Q_p) \Rightarrow \Htw^*(\SL,\Q_p) \\
  \simeq  \ & \bigotimes \Tor^{\Lambda_{\Q_p}[x_{4n-1}]}_{*,*}(\Q_p,\Q_p) \Rightarrow \Htw^*(\SL,\Q_p).
 \end{aligned}$$
This sequence converges because the fundamental group of the base is abelian (since it comes from a $+$ construction).
The first isomorphism requires the assumption that the module structure arises from a surjection of exterior algebras
$\Lambda_{\Q_p}[x_*] \rightarrow \Lambda_{\Q_p}[\eta_*]$, which is exactly the content of assumption $(*)$.
On the other hand, the final sequence degenerates and 
may be computed explicitly;  the limit is given  by a tensor product
of the polynomial algebras on $x_{4n-1}$, shifted by $1$ degree.
In particular, we have the following\footnote{In particular, the rational cohomology conjecturally coincides with the stable rational cohomology of the classical group
$\Sp_{2n}(\Z)$. This is most likely a coincidence; the analogous computation for a general number field $F$ of
signature $(r_1,r_2)$ yields (conditionally on the analogue of $(*)$):
$$\Htw^*(\SL,\Q_p) = \bigotimes_{r_1} \Q_p[x_2,x_6,x_{10},\ldots] \otimes \bigotimes_{r_2} \Q_p[x_2,x_4,x_6,\ldots],$$
which is \emph{not} the stable cohomology of $\Sp_{2n}(\OL_F)$ as  soon as $F$ is not totally real, and nor is
it the stable rational cohomology of any classical group.}:
\begin{theorem} \label{theorem:padicborel}  Let~$F = \Q$, and assume 
that either $p$ is regular or Conjecture~\ref{conj:L} holds.
Then there is an isomorphism $\Htw^*(\SL,\Q_p) \simeq \Q_p[x_2,x_6,x_{10},\ldots]$. In particular, there is an equality
$\dim \Htw_n(\SL,\Z_p) \otimes \Q_p = 0$ unless $n$ is even, and $\dim \Htw_{n}(\SL,\Z_p) \otimes \Q_p$ is the
coefficient of $q^n$ in
$$\prod_{k=1}^{\infty} \frac{1}{(1 - q^{4k-2}) }= \prod_{k=1}^{\infty} (1 + q^{2k})
= 1 + q^2 + q^4 + 2 q^6 + 2 q^8 + 3 q^{10} + 4 q^{12} + 5 q^{14} + 6 q^{16} + 8 q^{18} + \ldots $$ 
\end{theorem}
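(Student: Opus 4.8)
The plan is to run the Eilenberg--Moore spectral sequence of the lower fibration in~\eqref{eq:fibration}, namely $\Yoon \to \BSL(\Z)^+ \to \BSL(\Z_p)^+$, rationally and $p$-adically, having already reduced (via Theorem~\ref{theorem:content}) the computation of $\Htw_*(\Z_p) \otimes \Q_p$ to that of the continuous rational cohomology of the fibre $\Yoon$. First I would record the rational cohomology of base and total space: by Borel's theorem $H^*(\BSL(\Z_p)^+;\Q_p) = \Lambda_{\Q_p}[x_3,x_5,x_7,\ldots]$ (one generator in each odd degree $2k+1$, $k \ge 1$, coming from the local $K$-groups $K_{2n-1}(\Z_p;\Z_p)$ whose ranks are $r_1 + r_2 = 1$ for $\Q$) and $H^*(\BSL(\Z)^+;\Q_p) = \Lambda_{\Q_p}[\eta_5,\eta_9,\eta_{13},\ldots]$ (generators in degrees $\equiv 1 \bmod 4$, since $K_{4k+1}(\Z) \otimes \Q$ has rank one and $K_{4k-1}(\Z) \otimes \Q = 0$). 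Then $E^2$ is the $\Tor$ of $\Q_p$ against the total-space cohomology over the base cohomology, as displayed.

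The crux is pinning down the module structure of $\Lambda_{\Q_p}[\eta_*]$ over $\Lambda_{\Q_p}[x_*]$, induced by the map $K_n(\Z) \otimes \Q_p \to K_n(\Z_p) \otimes \Q_p$, equivalently the $p$-adic regulator. Under the Ansatz $(**)$ I would assert that $x_{4n-1}$ acts as zero (forced, since there is no target generator in degree $4n-1$ on the total space) and $x_{4n+1}$ acts as $\eta_{4n+1}$ up to a nonzero scalar --- this last point is precisely the injectivity/surjectivity of the regulator $K_{4n+1}(\Z) \otimes \Q_p \to K_{4n+1}(\Z_p) \otimes \Q_p$, which is exactly what $(**)$ guarantees (either regularity via Lemma~\ref{lemma:q}, or Conjecture~\ref{conj:L}). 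With this, the module map is the surjection of exterior algebras $\Lambda_{\Q_p}[x_*] \twoheadrightarrow \Lambda_{\Q_p}[\eta_*]$ sending each $x_{4n+1} \mapsto \eta_{4n+1}$ and killing the $x_{4n-1}$; its kernel ideal is generated by the regular sequence $x_3, x_7, x_{11}, \ldots$ (the generators in degrees $\equiv 3 \bmod 4$), so the $\Tor$ collapses to $\Tor^{\Lambda_{\Q_p}[x_3,x_7,x_{11},\ldots]}_{*,*}(\Q_p,\Q_p)$, i.e.\ the second displayed isomorphism.

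Next I would compute this $\Tor$ by the standard Koszul/bar resolution for an exterior algebra on a single odd generator: $\Tor^{\Lambda[x_{4n-1}]}(\Q_p,\Q_p)$ is the divided-power (equivalently, over $\Q_p$, polynomial) algebra on a class in one lower internal degree. Concretely $\Tor^{\Lambda_{\Q_p}[x_{2d-1}]}(\Q_p,\Q_p) = \Q_p[\sigma x_{2d-1}]$ with $\sigma x$ in total degree $2d-2$. Taking the tensor product over $n$ with $x_{4n-1}$ of internal degree $4n-1$ yields $\bigotimes_n \Q_p[y_{4n-2}]$, a polynomial algebra on generators in degrees $2, 6, 10, \ldots$, which is exactly $\Q_p[x_2,x_6,x_{10},\ldots]$. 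Since these classes all lie in even total degree, the spectral sequence degenerates at $E^2$ (no room for differentials, which change parity), giving $\Htw^*(\Q_p) \simeq \Q_p[x_2,x_6,x_{10},\ldots]$ as claimed.

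Finally, to extract the Poincar\'e series I would dualize: since $\Htw_n(\Z_p) \otimes \Q_p = \Hom(\Htw^n(\Q_p), \Q_p)$ has the same dimension as $\Htw^n(\Q_p)$, the generating function $\sum_n \dim \Htw_n(\Z_p) \otimes \Q_p \, q^n$ is the Hilbert series of $\Q_p[x_2,x_6,x_{10},\ldots]$, namely $\prod_{k=1}^\infty (1-q^{4k-2})^{-1}$; the Euler-type identity $\prod_{k=1}^\infty (1-q^{4k-2})^{-1} = \prod_{k=1}^\infty (1+q^{2k})$ (a specialization of Euler's pentagonal/partition identities after substituting $q \mapsto q^2$) then produces the stated expansion, and evenness of all generators gives vanishing in odd degrees. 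The main obstacle is entirely the module-structure step: identifying the action of $x_{4n+1}$ with $\eta_{4n+1}$ is the nonformal input, since it encodes the nonvanishing of the $p$-adic regulator and is unconditional only under $(**)$; everything downstream (the $\Tor$ computation, degeneration, and combinatorial rewriting) is formal once that surjection of exterior algebras is in hand.
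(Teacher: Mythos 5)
Your proposal is correct and follows essentially the same route as the paper's own argument in \S\ref{section:EM}: the Eilenberg--Moore spectral sequence for the fibration in equation~\eqref{eq:fibration}, with the nonformal input being exactly the identification (under $(**)$) of the module structure via the $p$-adic regulator $K_{4n+1}(\Z)\otimes\Q_p \to K_{4n+1}(\Z_p)\otimes\Q_p$, followed by the reduction to $\Tor^{\Lambda_{\Q_p}[x_3,x_7,x_{11},\ldots]}_{*,*}(\Q_p,\Q_p)$ and the Koszul computation yielding $\Q_p[x_2,x_6,x_{10},\ldots]$. You even make explicit two points the paper leaves implicit --- degeneration at $E^2$ by the parity of Eilenberg--Moore differentials, and the Euler identity for the Hilbert series --- though you gloss the inverse-limit set-up with $\Z/p^s$ coefficients and the convergence remark (abelian fundamental group of the base) that the paper records.
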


 \begin{remark} \emph{{\bf Where do the Borel classes go?} The maps
 $H^*(\Gamma,\Q_p) \rightarrow H^*(\Gamma(p^r),\Q_p)$ are isomorphisms (in the
 stable range) for all $r$, and yet, under the assumptions
 of Theorem~\ref{theorem:padicborel},
  $\Htw^*(\SL,\Q_p) = 0$ in all odd degrees; how do
 we reconcile these two statements? The explanation is
 that the classes must become infinitely $p$-divisible up the congruence tower.
 In general, the divisibility of Borel classes in the limit is equivalent to Conjecture~\ref{conj:L},
 by Proposition~\ref{prop:Sch}, this holds for all but finiteiy many of the primitive classes.
  This is in contrast
 to what happens for $\Sp_{2n}(\Z)$. According to Deligne~\cite{Deligne}, the stable class in degree two does 
 not become $p$-divisible for any $p$ up the congruence tower. A back of the envelope calculation suggested by
 the methods of this paper indicates that the same should be true for all stable classes in $\Sp_{2n}(\Z)$. 
 }
 \end{remark}
 
 \begin{remark} \emph{Since~$SK(\OL;\OL_p)$ is an infinite loop space and thus an~$H$-space, one
 way to interpret Theorem~\ref{theorem:padicborel} is simply to note that for~$H$-spaces, that the homology is rationally a polynomial algebra
 on its homotopy. However, one must be slightly careful with such a statement, since our homotopy groups
 are with respect to coefficients in~$\Z_p$, and are homology and cohomology groups with coefficients in~$\Q_p$
 are defined in terms of
  inverse limits (and are thus ``continuous'' (co-)homology groups).  A similar remark applies to the cohomology
  ring of~$\SL(\OL_p)$ and the~$K$-theory of local fields with coefficients in~$\Z_p$. 
  }
  \end{remark}

 \subsection{The
 Hochschild--Serre Spectral Sequence III: determining the differentials}
 We have an isomorphism $H^*(G) \otimes \Q_p = \Lambda[x_3,x_5,x_7,\ldots]$.
We assume in this section that  there is an isomorphism $\Htw^*(\SL,\Q_p) \simeq \Q_p[x_2,x_6,x_{10},\ldots]$
as in~\S\ref{section:EM}. However,
 in order to avoid confusion, we shall use different notation, and in particular we shall write:
$$\Htw^*(\SL,\Q_p) \simeq \Q_p[\y_2,\y_6,\y_{10},\ldots].$$ 
 It follows that the
 second page of the Hochschild--Serre spectral sequence $E^{2}_{*,*}$ is given by
 $$\Q_p[\y_2,\y_6,\y_{10},\ldots] \otimes \Lambda[x_3,x_5,x_7,\ldots]$$
 We may write this out on page two as follows:
 \begin{center}
 {\small
\xymatrix @R=0.4mm @C=0.15cm {
& & \\
 &  & \ar @{-}[ddddddddddddddd] \\
\\& q
\\
\\
& 10 & & \y^5_2, \y^2_2 \y_6, \y_{10} \\
& 9 & & 0 & 0 & \\
& 8 & & \y^4_2, \y_2 \y_6 &  0 &  0 & \\
& 7 & & 0 & 0 & 0 & 0  \\
& 6 & & \y^3_2, \y_6 &  0 &  0 & \y^3_2 x_3, \y_6 x_3 & 0 \\ 
& 5 & & 0 & 0 & 0 & 0  & 0 & 0 & \\
& 4 & & \y^2_2 &  0 &  0 & \y^2_2 x_3 & 0  & \y^2_2 x_5 & 0 \\
& 3 & & 0 & 0 & 0 & 0  & 0 & 0 & 0 & 0   \\
& 2 & & \y_2 &  0 &  0 & \y_2 x_3 & 0  & \y_2 x_5 & 0 & \y_2 x_7 & \y_2 x_3 x_5  \\
& 1 & & 0 & 0 & 0 & 0  & 0 & 0 & 0 & 0  & 0 & 0 &  \\
& 0 & & 1 &  0 &  0 & x_3 & 0  & x_5 & 0 & x_7 & x_3 x_5 & x_9 & x_3 x_7 \\
& &  \ar @{-}[rrrrrrrrrrrrrr] & & & & &  &  &  &   & & & & & & \\
& & & 0 & 1 & 2 & 3 & 4 & 5 & 6 & 7 & 8 & 9 & 10 & &  p  &\\
& & &
}}
\end{center}
The Koszul complex associated to a polynomial algebra induces a spectral sequence 
$$
\begin{aligned}
\Q_p[\y_2,\y_6,\y_{10},\ldots] \otimes \Lambda[x_3,x_5,x_7,\ldots] = & \\
\Lambda[x_5,x_9,x_{13},\ldots]   \otimes \left( \Q_p[\y_2,\y_6,\y_{10} \ldots] \otimes \Lambda[x_3,x_7,x_{11} \ldots]\right)
 & \Rightarrow \Lambda[x_5,x_9,x_{13},\ldots],
\end{aligned}$$
given explicitly by $d^{r} = 0$ unless $r = 4n-1$, in which case $d^{4n-1}(\y_{4n-2}) = x_{4n-1}$ and is zero
otherwise. 
  For example, page $4$ is given as follows:
 \begin{center}
 {\small
\xymatrix @R=0.4mm @C=0.18cm {
& & \\
 &  & \ar @{-}[dddddddddddddddddd] \\
\\& q
\\
\\
& 13 & & 0  &   \\ 
& 12 & & \y^2_6  &  0 & \\ 
& 11 & & 0  &  0 & 0 & \\
& 10 & & \y_{10}  &  0 & 0 & 0 \\
& 9 & & 0 & 0 &    0 & 0 & 0\\
& 8 & &  0 &  0 &  0 &    0 & 0 & 0 \\
& 7 & & 0 & 0 & 0 & 0  &  0 & 0 & 0 \\
& 6 & & \y_6 &  0 &  0 & 0 & 0 &  \y_6 x_5 & 0 & \y_6 x_7  \\ 
& 5 & & 0 & 0 & 0 & 0  & 0 & 0  &  0 & 0 & 0  \\
& 4 & & 0 &  0 &  0 & 0 & 0  & 0 & 0  &  0 & 0 & 0 \\
& 3 & & 0 & 0 & 0 & 0  & 0 & 0 & 0 & 0  &  0 & 0 & 0  \\
& 2 & & 0 &  0 &  0 & 0 & 0  & 0 & 0 & 0 & 0 &  0 & 0 & 0  \\
& 1 & & 0 & 0 & 0 & 0  & 0 & 0 & 0 & 0  & 0 & 0 &  0 & 0 & 0  \\
& 0 & & 1 &  0 &  0 & 0 & 0  & x_5 & 0 & x_7 & 0 & x_9 & 0  & x_{11} & x_5 x_7 & x_{13} & \ldots \\
& &  \ar @{-}[rrrrrrrrrrrrrrrr] & & & & &  &  &  &   & & & & & & &  &\\
& & & 0 & 1 & 2 & 3 & 4 & 5 & 6 & 7 & 8 & 9 & 10 & 11 & 12 & 13 & p  &\\
& & &
}}
\end{center}
Under our assumptions, there is a natural map of spectral sequences from this to
Hochschild--Serre provided that one knows the maps $d^{4n-1}(\y_{4n-2}) = x_{4n-1}$
coincides with the corresponding  maps in Hochschild--Serre (equivalently,
the $y_{4n-2}$ are \emph{transgressive}). By induction, this reduces
to showing that the maps $K_{4n-1}(\Z_p;\Z_p) \rightarrow \Ktw_{4n-2}(\Z,\Z_p;\Z_p)$ are rational isomorphisms,
which we certainly assumed in~\S~\ref{section:EM} in order to compute $\Htw^*(\Q_p)$ in the first place. Thus, by the Zeeman
comparison theorem, these two spectral sequences coincide (\cite{Zeeman2}, see 
also Theorem~3.27 of~\cite{Spectral}).

\section{Classical Cohomology Groups}

We turn in this section to some explicit  computations. Let~$\Gamma_N = \SL_N(\OL_F)$ for
some~$N$ which is sufficiently large so that~$\Htw_*(\SL,\Z_p) = \Htw_*(\SL_N,\Z_p)$
for~$*$ in the range of computation (this is $* \le 3$ except in section~\ref{section:higher}).
Let~$G_N = \SL_N(\Z_p)$.
Benson Farb 
asked (personal communication)
 whether one can compute the
homology groups $H_2(\Gamma(p),\F_p)$ --- we give an complete answer below. 
 The information about such groups is encoded in the completed
cohomology groups $\Htw^*$ together with the differentials of the Hochschild--Serre spectral sequence.
This exercise
probably only serves to indicate why the completed homology groups $\Htw_*$ and completed $K$-groups
$\Ktw_*$  are more natural objects of study than their finite (unstable) analogues.

\medskip

We make the following assumptions on $p$:
\begin{quote}
 $({*}{*})$: \  $p$ does not divide~$w_F$, and  $\Htw^2(\SL,\Q_p/\Z_p) \simeq  H^1(G_S,\Q_p/\Z_p(-1))$.
 \end{quote}

By Theorem~\ref{theorem:two}, this holds if $p$ does not divide $w_F |K_2(\OL_F)|$, and 
  conjecturally  always holds for $p > 2$. In fact, we shall state the theorems below with
  the stronger hypothesis $p \nmid w_F |K_2(\OL_F)|$, but we only use the assumption above.
  Let $d = [F:\Q]$.
Because~$p$ does not divide~$w_F$, we have $\Htw^1(\SL_N,\Q_p/\Z_p) = 0$ (once more
using~\cite{Serre}),
and thus, from the long exact sequence of completed
cohomology corresponding to the short exact sequence of modules:
$$0 \rightarrow \F_p \rightarrow \Q_p/\Z_p \stackrel{\times p}{\longrightarrow}\Q_p/\Z_p
\rightarrow 0,$$
we find that
$$\Htw^2(\SL_N,\F_p) = \Htw^2(\SL_N,\Q_p/\Z_p)[p] = H^1(G_S,\Q_p/\Z_p(-1))[p] 
\simeq H^1(G_S,\F_p(-1)).$$
The Hochschild--Serre spectral sequence
$H^i(G_N(p^m),\Htw^j(\SL_N,\F_p)) \Rightarrow H^{i+j}(\Gamma_N(p^m),\F_p)$ gives rise in the usual way
(using $\Htw^1(\SL_N,\F_p) = 0$) to
the exact sequence:
{\small
               $$0 \rightarrow H^2(G_N(p^m),\F_p)
\rightarrow H^2(\Gamma_N(p^m),\F_p) \rightarrow H^1(G_S,\F_p(-1))
\rightarrow H^3(G_N(p^m),\F_p) \rightarrow H^3(\Gamma_N(p^m),\F_p)
$$
}
Consider  the cohomology ring $H^*(G_N(p^m),\F_p)$. Since $p > 2$, we deduce that
the group $G_N(p^m)$ is $p$-powerful, and (by a theorem of Lazard) that
$G_N(p^m)$
 is
a Poincar\'{e} homology group of dimension $\dim(G_N)= d(N^2 - 1)$. 
More explicitly, by  Lazard~\cite{Lazard} (Ch V, 2.2.6.3 and~2.2.7.2, p.167), we deduce that
the cohomology of $G_N(p^m)$ is an exterior algebra on the degree one classes:
$$H^*(G_N(p^m),\F_p) \simeq \wedge^*(H^1(G_N(p^m),\F_p)).$$
The group $G_N/G_N(p^m) =   \SL_N(\OL/p^m)$  acts naturally on this ring, the
action factors through $\PSL_N(\OL/p)$. Let us denote $H^1(G_N(p^m),\F_p)$ by
$\LL$ as an $\SL_N(\OL/p)$-module. We have
$$\LL \simeq   \Hom(H_1(G_N(p^m),\F_p),\F_p) \simeq
\Hom(M^0_N(\F_p),\F_p),$$
where $M^0_N(\F_p)$  denotes the matrices of trace zero. 
We deduce
 that there is an exact sequence:
\begin{equation}
\label{eq:deduce}
0 \rightarrow \wedge^2 \LL \rightarrow H^2(\Gamma_N(p^m),\F_p) \rightarrow 
H^1(G_S,\F_p(-1))   \rightarrow \wedge^3 \LL. \end{equation}
Since the map $H^3(G_N(p^m),\F_p) \rightarrow H^3(G_N(p^{m+1}),\F_p)$ is zero for all $m \ge 1$,
we also deduce that:
\begin{lemma} Suppose that~$p$ does not divide~$w_F |K_2(\OL_F)|$. \label{lemma:one} \label{LEMMA:ONE} For $m \ge 1$, there is an exact sequence:
$$0 \rightarrow \wedge^2 \LL \rightarrow H^2(\Gamma_N(p^m),\F_p) \rightarrow 
H^1(G_S,\F_p(-1))   \rightarrow 0.$$
For $m \ge 2$, this sequence splits.
\end{lemma}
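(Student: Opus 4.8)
The plan is to upgrade the four-term sequence~\eqref{eq:deduce} by analysing the Hochschild--Serre spectral sequence $E_2^{i,j} = H^i(G(p^m),\Htw^j(\F_p)) \Rightarrow H^{i+j}(\Gamma(p^m),\F_p)$ together with its naturality along the congruence tower. Since the $\SL_N(\OL_p)$-action on $\Htw^*(\F_p)$ is trivial (Theorem~\ref{theorem:MF}) and $\Htw^1(\F_p)=0$, the only $E_2$-terms of total degree $\le 2$ are $E_2^{2,0}=H^2(G(p^m),\F_p)=\wedge^2\LL$, $E_2^{0,2}=\Htw^2(\F_p)=H^1(G_S,\F_p(-1))$, and $E_2^{0,0}=\F_p$. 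One checks that $E_2^{2,0}$ receives no differentials, so $E_\infty^{2,0}=\wedge^2\LL$ is a genuine sub of $H^2(\Gamma(p^m),\F_p)$, while the last map in~\eqref{eq:deduce} is exactly the transgression $d_3\colon E_3^{0,2}=H^1(G_S,\F_p(-1)) \to E_3^{3,0}=H^3(G(p^m),\F_p)=\wedge^3\LL$.

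For the exactness statement (the case $m\ge 2$) I would show $d_3=0$. The inclusions $\Gamma(p^{m+1})\subset\Gamma(p^m)$ and $G(p^{m+1})\subset G(p^m)$ induce a morphism of spectral sequences whose effect on $E_2$-terms is restriction; on $E_2^{0,2}=\Htw^2(\F_p)$ this is the identity (the action being trivial), while on $E_2^{3,0}$ it is the restriction map $H^3(G(p^m),\F_p)\to H^3(G(p^{m+1}),\F_p)$. Naturality of the differential then gives $d_3^{(m+1)}=\mathrm{res}\circ d_3^{(m)}$, so the vanishing of restriction on $H^3$ (for all $m\ge 1$) forces $d_3^{(m)}=0$ for $m\ge 2$, which is the desired short exact sequence. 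The vanishing of restriction is where the local structure enters: by Lazard, $H^*(G(p^m),\F_p)=\wedge^*H^1(G(p^m),\F_p)$ is an exterior algebra on degree-one classes, and for $p>2$ unramified (assumption $({*}{*}{*})$) the congruence subgroups are uniform with $G(p^{m+1})=\Phi(G(p^m))$; hence $G(p^{m+1})$ dies in the Frattini quotient, restriction vanishes on $H^1$, and—since restriction is a ring homomorphism—it vanishes on every $H^{\ge 1}$, in particular on $H^2$ and $H^3$.

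For the splitting (the case $m\ge 3$) I would exploit the same morphism of short exact sequences. For $m\ge 2$ the transition map fits into a morphism of the short exact sequences at levels $m$ and $m+1$ that is the identity on the quotient $\Htw^2(\F_p)=H^1(G_S,\F_p(-1))$ and is the restriction $H^2(G(p^m),\F_p)\to H^2(G(p^{m+1}),\F_p)$—hence \emph{zero} by the previous paragraph—on the sub $\wedge^2\LL$. All maps are equivariant for $\SL_N(\OL/p^m)$, whose action factors through $\SL_N(\OL/p)$, so the extension classes $\delta_m$ live in $\mathrm{Ext}^1_{\SL_N(\OL/p)}(H^1(G_S,\F_p(-1)),\wedge^2\LL)$. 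Functoriality of the extension class under a morphism of extensions covering $(0,\mathrm{id})$ gives $\delta_{m+1}=\mathrm{id}^*\delta_{m+1}=0_*\delta_m=0$, so the extension at level $m+1$ splits; ranging over $m\ge 2$ yields splitting for all $m\ge 3$.

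The main obstacle I expect is not the formal spectral-sequence bookkeeping but the clean justification of its two inputs: (i) that the relevant transition maps on $E_2$ really are restriction—equivalently, that the congruence tower induces a genuine morphism of these Hochschild--Serre spectral sequences, including on the coefficients $\Htw^j$ coming from completed cohomology; and (ii) the vanishing of restriction $H^1(G(p^m),\F_p)\to H^1(G(p^{m+1}),\F_p)$, which rests on the uniform/powerful structure of $G(p^m)$ and the identification $G(p^{m+1})=\Phi(G(p^m))$ for $p>2$ unramified. Once these are secured the remainder is the five-lemma and the functoriality of $\mathrm{Ext}^1$, and the numerology $m\ge 2$ versus $m\ge 3$ falls out automatically according to whether the restriction-zero fact is applied to $H^3$ (shifting $m\ge 1$ to $m\ge 2$) or propagated one further step through the tower for the extension class (shifting $m\ge 2$ to $m\ge 3$).
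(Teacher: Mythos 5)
Your proposal is correct and takes essentially the same route as the paper, whose entire proof is the one-line observation that the restriction maps $H^3(G(p^m),\F_p) \rightarrow H^3(G(p^{m+1}),\F_p)$ vanish for $m \ge 1$ (a consequence, as you note, of Lazard's exterior-algebra description together with $G(p^{m+1}) = \Phi(G(p^m))$ for $p>2$ unramified). Your fleshing out of the two deductions---naturality of the transgression $d_3$ along the tower for exactness at $m \ge 2$, and pushforward of the extension class along the zero map on $\wedge^2 \LL$ for the splitting at $m \ge 3$---is exactly the intended argument, including the shift in $m$ explaining the two thresholds.
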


\begin{proof}
By the computation above, we have exact sequences and commutative diagrams as follows:
{\small
$$\begin{diagram}
0  & \rTo &  \wedge^2 \LL  & \rTo & H^2(\Gamma_N(p),\F_p) & \rTo &
H^1(G_S,\F_p(-1))  & \rTo &  \wedge^3 \LL \\
 & & \dTo & & \dTo & & \dEquals & & \dTo \\
0  & \rTo &  \wedge^2 \LL  & \rTo & H^2(\Gamma_N(p^m),\F_p) & \rTo &
H^1(G_S,\F_p(-1))  & \rTo &  \wedge^3 \LL
 \end{diagram}$$
 }
 where~$m > 1$. The equality in the third column follows from the fact that
 the invariants of~$\Htw^2(\SL_N,\F_p)$ under~$G_N(p^m)$ do not depend on~$m$, because
 the entire group~$G_N$ acts trivially.
 On the other hand, the maps on the first and last columns are identically zero, because
 they are induced from the map of groups
 $$G_N(p)/G_N(p^2) 
 \rightarrow G_N(p^m)/G_N(p^{m+1}),$$
 which is the zero map.
In particular, the vanishing of the map in the last
column implies by commutativity that the image of~$H^1(G_S,\F_p(-1))$ in~$\wedge^3 \LL$ must be trivial, which
shows that the sequence of the lemma
 is exact. On the other hand, the vanishing of the map in the first
row implies that the quotient~$H^2(\Gamma_N(p),\F_p)/\wedge^2 \LL = H^1(G_S,\F_p(-1))$ maps into~$H^2(\Gamma_N(p^m),\F_p)$
in a  way that is compatible with the map to~$H^1(G_S,\F_p(-1))$, which splits the sequence for~$m > 1$.
\end{proof}

\subsection{A mild improvement of Lemma~\ref{lemma:one}}

\begin{lemma} \label{prop:benson} 
Suppose  that $p > 3$ splits completely in $F$ and does not divide the order of
$K_2(\OL_F)$ or $K_3(\OL_F)$.
Then, for  all $m$, there is an isomorphism
$$ H^2(\Gamma_N(p^m),\F_p)  \simeq \wedge^2 \LL \oplus (\F_p)^{d}.$$
\end{lemma}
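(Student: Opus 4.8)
The plan is to build on the exact sequence of Lemma~\ref{lemma:one} and improve it in two directions: identify the quotient $H^1(G_S,\F_p(-1))$ explicitly as the trivial module $(\F_p)^d$, and extend both the short exact sequence and its splitting from the range $m\ge 3$ to every $m\ge 1$. The case $m=1$ is the one feeding into Corollary~\ref{corr:benson}, and it is where the work lies.

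\emph{Identifying the quotient.} Since $p\nmid\#K_3(\OL_F)$ presupposes $K_3(\OL_F)$ finite, the hypotheses force $F$ to be totally real (so $r_2=0$ and $d=r_1$). Feeding $p\nmid\#K_2(\OL_F)$ and $p\nmid\#K_3(\OL_F)$ into Theorem~\ref{theorem:V} gives $H^2(G_S,\Z_p(2))=K_2(\OL_F)\otimes\Z_p=0$ and $H^1(G_S,\Z_p(2))=K_3(\OL_F)\otimes\Z_p=0$. Because $p$ splits completely and $p>3$, each $F_v=\Q_p$ with $H^0(G_v,\Q_p/\Z_p(2))=0$, so $K_3(\OL_p;\Z_p)=\prod_{v\mid p}H^1(G_{\Q_p},\Z_p(2))\simeq\Z_p^{d}$ is torsion free while $K_2(\OL_p;\Z_p)=0$. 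The five-term sequence of \S\ref{section:star} then collapses to an isomorphism $\Htw_2(\Z_p)\simeq K_3(\OL_p;\Z_p)\simeq\Z_p^{d}$; since $\Htw_1(\Z_p)=0$, reduction mod $p$ and duality give $\Htw^2(\F_p)=H^1(G_S,\F_p(-1))\simeq(\F_p)^{d}$, with the trivial $\SL_N(\OL/p^m)$-action supplied by Theorem~\ref{theorem:MF}. As an independent check, the additive global Euler characteristic formula applied to $\F_p(-1)$ gives $\dim H^1-\dim H^2=r_1+r_2=d$, while Poitou--Tate duality~\eqref{eq:PT}, the vanishing of $H^2(G_v,\F_p(-1))$ for all $v\in S$ (using $p>3$ and $p$ split), and $H^1(G_S,\F_p(2))=K_3(\OL_F)\otimes\F_p=0$ together force $H^2(G_S,\F_p(-1))=0$.

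\emph{The sequence at $m=1$.} By \eqref{eq:deduce} the only obstruction to the short exact sequence of Lemma~\ref{lemma:one} at level $m$ is the transgression $\tau_m\colon\Htw^2(\F_p)=(\F_p)^{d}\to H^3(G(p^m),\F_p)=\wedge^3\LL$; for $m\ge 2$ its vanishing follows, as in Lemma~\ref{lemma:one}, from naturality in the tower together with $H^3(G(p^m),\F_p)\to H^3(G(p^{m+1}),\F_p)=0$. Comparing the level-$1$ spectral sequence with the full-level one of \S\ref{section:star} gives $\tau_1=\mathrm{res}^{\SL_N(\OL_p)}_{G(p)}\circ\tau_0$, where $\tau_0\colon\Htw^2(\F_p)\to H^3(\SL_N(\OL_p),\F_p)$ is the full-level transgression, which by the first part ($K_3(\OL_F)\otimes\Z_p=0$) is an isomorphism onto the $d$-dimensional space of primitive classes. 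The hard part, and the main obstacle, is to show that restriction from $\SL_N(\OL_p)=\prod_{v\mid p}\SL_N(\Z_p)$ to the first congruence subgroup $G(p)$ annihilates these primitive degree-three classes. I would establish this by a Lazard computation: since $p>3$ the group $G(p)$ is uniform and $H^*(G(p),\F_p)=\bigl(\wedge^*\sl_N^{\vee}\bigr)^{\otimes d}$ is an exterior algebra in which the Lie bracket is divisible by $p$, so the image of a primitive (Cartan) class $(X,Y,Z)\mapsto\mathrm{tr}(X[Y,Z])$ acquires a factor of $p$ and dies mod $p$; equivalently, $H^*(\SL_N(\F_p),\F_p)$ is trivial in the stable range, so the primitive classes are detected only integrally (by $K_3(\Z_p;\Z_p)$) and their mod-$p$ reductions vanish on the congruence subgroup. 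Granting this, $\tau_1=0$ and \eqref{eq:deduce} becomes $0\to\wedge^2\LL\to H^2(\Gamma(p),\F_p)\to(\F_p)^d\to 0$.

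\emph{Splitting.} Finally I would split the sequence at the remaining levels $m=1,2$. Because $p$ splits completely the relevant reductive quotient is the product $\SL_N(\F_p)^{d}$, under which $\wedge^2\LL=\wedge^2(\ad^{\oplus d})$ contains no trivial constituent while the quotient $(\F_p)^d$ is trivial; thus the extension class lies in $\mathrm{Ext}^1_{\SL_N(\OL/p^m)}((\F_p)^d,\wedge^2\LL)$, which I would show vanishes in the stable range by the same mechanism used for $m\ge 3$ in Lemma~\ref{lemma:one}. Alternatively, since $(\F_p)^d$ is the reduction of the free module $\Htw_2(\Z_p)=\Z_p^d$, these classes lift canonically up the congruence tower and furnish an equivariant section. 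The genuinely delicate ingredient throughout is the vanishing of $\tau_1$ at $m=1$, i.e.\ that the primitive $K_3$-classes of $\SL_N(\OL_p)$ restrict to zero on the first congruence subgroup; everything else is a combination of Poitou--Tate duality, Theorem~\ref{theorem:V}, and the formal properties already in play for $m\ge 2$.
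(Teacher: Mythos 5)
Your reduction of the problem is sound and partly parallels the paper: the identification $\Htw^2(\F_p)\simeq H^1(G_S,\F_p(-1))\simeq(\F_p)^d$ under these hypotheses is correct (the paper phrases it as $\Htw^2(\F_p)\simeq H^3(G,\F_p)\simeq(\F_p)^d$, which is your $\tau_0$ being an isomorphism), and the factorization $\tau_1=\mathrm{res}\circ\tau_0$ via naturality of the two Hochschild--Serre sequences is fine. But the step you yourself flag as the crux --- that restriction $H^3(\SL_N(\OL_p),\F_p)\to H^3(G(p),\F_p)$ annihilates the degree-three classes --- is a genuine gap, and your two proposed justifications do not close it. First, the Lazard rescaling computes the restriction of the Cartan \emph{form} $\mathrm{tr}(X[Y,Z])$, which indeed picks up powers of $p$ on $p\,\sl_N(\Z_p)$; but what must be controlled is the integral \emph{generator} of $H^3(\SL_N(\OL_p),\Z_p)$, which under Lazard's rational comparison is a priori only a $\Q_p^\times$-multiple of the Cartan class, so the normalization could absorb exactly the divisibility you gain; moreover $H^3(G(p),\Z_p)$ has large torsion, so even a verified divisibility $\mathrm{res}(x_3)\in p\,H^3(G(p),\Z_p)$ needs an argument before it yields vanishing in $H^3(G(p),\F_p)=\wedge^3\LL$. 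Second, the ``equivalently'' clause is a non sequitur: Quillen's vanishing $H^{*>0}(G/G(p),\F_p)=0$ concerns inflation, and in the Hochschild--Serre sequence for $G(p)\subset G$ it only kills the $E^{3,0}$ contribution to $H^3(G,\F_p)$; the one-dimensional class could perfectly well live in $E^{0,3}_{\infty}$, which is precisely the image of restriction. Showing that it instead lives in filtration $\geq 1$, i.e.\ in $E^{2,1}=H^2(\SL_N(\F_p),\LL)$, is the actual content --- note that since the lemma itself is equivalent to $\tau_1=0$, any proof must supply this input from somewhere.

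The paper supplies it by an entirely different mechanism that sidesteps the transgression: run the \emph{classical} Hochschild--Serre sequence for $\Gamma(p)\subset\Gamma$ with quotient $G/G(p)$. Quillen's theorem and the hypotheses $H^2(\Gamma,\F_p)=H^3(\Gamma,\F_p)=0$ force $H^0(G/G(p),H^2(\Gamma(p),\F_p))\simeq H^2(G/G(p),\LL)$, and Friedlander's computation $H_2(\GL_N(\F_p),\LL^*)\simeq\Z/p\Z$ (this is where complete splitness of $p$ is used) shows this space has dimension at least $d$; since $(\wedge^2\LL)^{G/G(p)}=0$, the sequence~\ref{eq:deduce} makes the invariants inject into the $d$-dimensional $H^1(G_S,\F_p(-1))$, so the injection is an isomorphism. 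This yields surjectivity (hence $\tau_1=0$) and an equivariant section simultaneously, since the trivial quotient is hit isomorphically by the invariant subspace. Your splitting step has a related gap: over $\F_p$ every short exact sequence of vector spaces splits, so the only substance is $G/G(p^m)$-equivariance, and your route requires the vanishing of $H^1(\SL_N(\F_p)^d,\wedge^2\LL)$ (or its level-$p^m$ analogue), which you assert without proof; the paper's dimension count renders it unnecessary. In short: correct skeleton and a true key claim, but the two load-bearing assertions ($\tau_1=0$ and the equivariant splitting) are not established, and the missing ingredient is exactly the finite-group cohomology computation the paper imports from Friedlander.
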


\begin{remark} \emph{The condition on $p$ is never satisfied unless $F$ is totally real,
since otherwise $K_3(\OL_F)$ is infinite. If $F$ is totally real, however, then the assumptions
hold for all but finitely many $p$ that split completely in $F$. For example, if $F = \Q$, then the assumptions hold for $p > 3$.
 The splitting condition should not be necessary --- we assume it in order to invoke a theorem of~\cite{FriedCompute} concerning
 the homology of $\GL_N(\F_p)$ of some particular module;
  the methods of~\cite{FriedCompute} should also apply to $\GL_N(\F_q)$ for a finite field $\F_q$, which would be
  sufficient provided that $p$ is unramified in $F$.}
\end{remark}

\begin{proof} The assumptions on $p$ imply that
$H^3(\Gamma_N,\F_p) = H^2(\Gamma_N,\F_p) = 0$, and hence that $\Htw^2(\SL_N,\F_p)
\simeq H^3(G_N,\F_p)
\simeq (\F_p)^{d}$. Consider the  \emph{classical} Hochschild--Serre spectral sequence
 $$H^i(\SL_N(\OL/p),H^j(\Gamma_N(p),\F_p)) \Rightarrow H^{i+j}(\Gamma_N,\F_p).$$
 As representations for $\SL_N(\OL/p)$, we have
 $H^1(\Gamma_N(p),\F_p) = \LL$ and $H^0(\Gamma_N(p),\F_p) = \F_p$.
 By Quillen's computation of~$K_*(\F_q)$ in~\cite{Quillen} (Theorem~$6$),
 if $p$ is unramified in $F$, then $K_n(\OL/p) \otimes \F_p  = 0$ for $n > 0$, or equivalently that
 $H^n(\SL_N(\OL/p),\F_p) = 0$ for all $n > 0$.
 Since  $\LL$ has no $\SL_N(\OL/p)$-invariants,
 the second page of the sequence is as follows:
 \begin{center}
 {\small
\xymatrix @R=0.6mm @C=0.25cm {
& & \\
 &  & \ar @{-}[ddddddd] \\
\\& q
\\
\\
& 2 & &  H^0(\SL_N(\OL/p),H^2(\Gamma_N(p),\F_p)) & \\
& 1 & & 0 & H^1(\SL_N(\OL/p),\LL) & H^2(\SL_N(\OL/p),\LL)  \\
& 0 & & \F_p & 0 & 0&  0 \quad &  0  \\
& &  \ar @{-}[rrrrrr] & & & & & &  & & & &\\
& & & 0 & 1 & 2 & 3 \quad  & 4   &\\
& & &
}}
\end{center}
Since $H^2(\Gamma_N,\F_p) = 0$ and $H^3(\Gamma_N,\F_p) = 0$ by assumption, we deduce that there is an isomorphism:
$$H^0(\SL_N(\OL/p),H^2(\Gamma_N(p),\F_p)) \simeq H^2(\SL_N(\OL/p),\LL).$$
By Proposition~3.0 of~\cite{FriedCompute}, there is an isomorphism:
$H_2(\GL_N(\F_p),\LL^{*}) \simeq \Z/p\Z$, which implies that 
$H^2(\GL_N(\F_p),\LL) = \Z/p\Z$, and hence  
that $H^2(\SL_N(\F_p),\LL)$ has dimension
\emph{at least} one.  Explicitly, since~$\GL_N(\F_p)/\SL_N(\F_p) = \F^{\times}_p$ has order prime to~$p$, the
Hochschild--Serre spectral sequence degenerates and one has an isomorphism:
$$H^*(\GL_N(\F_p),\LL) = H^*(\SL_N(\F_p),\LL)^{\F^{\times}_p},$$
and so a lower bound for the former group gives a lower bound for the latter.
Since $p$ is totally split in $F$, 
we have $\SL_N(\OL/p) = \prod_{v|p} \SL_N(\F_p)$, and thus  
 $H^2(\SL_N(\OL/p),\LL)$ has dimension at least $[F:\Q]$. 
Since $H^0(\SL_N(\OL/p),\wedge^2 \LL) = 0$, 
from the exact sequence~\ref{eq:deduce}, we deduce that there is an injection:
$$H^0(\SL_N(\OL/p),H^2(\Gamma_N(p),\F_p)) \rightarrow H^1(G_S,\F_p(-1)) \simeq (\F_p)^{[F:\Q]}.$$
The latter equality relies on the fact that~$p$ does not divide~$w_F |K_2(\OL_F)|$;
howeber, we are assuming that~$p$ does not divide the order of~$K_2(\OL_F)$, and~$p > 2$
cannot divide~$w_F$ because~$F$ is totally real.
As we just proved
the left hand side has dimension at least $[F:\Q]$, this map is an isomorphism, which therefore splits~(\ref{eq:deduce}), proving the lemma.
\end{proof}

This allows us to answer the question of Farb:

\begin{corr} \label{corr:benson}   Let $p > 3$, and let $\Gamma_N(p)$ denote
the congruence subgroup of $\SL_N(\Z)$. Then for sufficiently large $N$, there are isomorphisms 
of $\SL_N(\F_p)$-modules:
$$H^2(\Gamma_N(p),\F_p) \simeq \wedge^2 \LL \oplus \F_p, \qquad
H_2(\Gamma_N(p),\F_p) \simeq \wedge^2 \LL^* \oplus \F_p,$$
where $\LL^*$  is isomorphic to 
 $M^0_N(\F_p)$.
In particular,
$\displaystyle{\dim H^2(\Gamma_N(p),\F_p) = \binom{N^2 - 1}{2} + 1}$.
\end{corr}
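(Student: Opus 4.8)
The plan is to obtain the corollary as the $F = \Q$ specialization of Lemma~\ref{prop:benson}, followed by a dualization to pass from cohomology to homology. First I would verify the hypotheses of Lemma~\ref{prop:benson} for $F = \Q$ and $p > 3$: every rational prime splits completely in $\Q$ by convention, while $|K_2(\Z)| = 2$ and $|K_3(\Z)| = 48 = 2^4 \cdot 3$, so $p$ divides neither order once $p > 3$. Since $d = [\Q:\Q] = 1$, taking $m = 1$ in Lemma~\ref{prop:benson} (this is precisely the case excluded by the weaker Lemma~\ref{lemma:one}, which only splits for $m \ge 3$) yields the first isomorphism $H^2(\Gamma(p),\F_p) \simeq \wedge^2 \LL \oplus \F_p$ of $\SL_N(\F_p)$-modules directly. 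As with the lemma, this holds only in the stable range, which is the content of ``for sufficiently large $N$'': the identification $\Htw^2(\F_p) \simeq H^1(G_S,\F_p(-1))$ and the invocation of Proposition~3.0 of~\cite{FriedCompute} both require $N$ large.

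For the homology statement I would dualize over $\F_p$. The universal coefficient theorem supplies a natural, hence $\SL_N(\F_p)$-equivariant, isomorphism $H_2(\Gamma(p),\F_p) \simeq \Hom(H^2(\Gamma(p),\F_p),\F_p)$. Writing $\LL^* = \Hom(\LL,\F_p)$ for the $\F_p$-linear dual and using $\Hom(\wedge^2 \LL,\F_p) \simeq \wedge^2 \LL^*$ for the finite-dimensional space $\LL$, I would dualize the cohomology formula to obtain $H_2(\Gamma(p),\F_p) \simeq \wedge^2 \LL^* \oplus \F_p$. Finally, since $\LL \simeq \Hom(M^0_N(\F_p),\F_p)$ by construction, biduality gives $\LL^* \simeq M^0_N(\F_p)$, matching the stated module.

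The dimension count is then immediate: $\dim_{\F_p} \LL = \dim_{\F_p} M^0_N(\F_p) = N^2 - 1$, so $\dim_{\F_p} \wedge^2 \LL = \binom{N^2-1}{2}$, and adding the trivial summand gives $\dim H^2(\Gamma(p),\F_p) = \binom{N^2-1}{2} + 1$. I do not expect a genuine obstacle here, since all of the substance is contained in Lemma~\ref{prop:benson} and the corollary merely records its $F = \Q$ instance together with the passage to homology. The only point requiring mild care is the $\SL_N(\F_p)$-equivariance of the universal coefficient isomorphism and the identification $\LL^* \simeq M^0_N(\F_p)$; in particular I would note that $\LL$, and hence $\wedge^2 \LL$, may fail to be irreducible when $p \mid N$ (the scalar matrices then lie in $M^0_N(\F_p)$), but this affects neither the stated module isomorphisms nor the dimension count.
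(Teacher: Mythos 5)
Your proposal is correct and matches the paper's route exactly: the paper offers no separate argument for Corollary~\ref{corr:benson}, treating it as the $F=\Q$, $m=1$ instance of Lemma~\ref{prop:benson} (whose hypotheses hold for $p>3$ since $|K_2(\Z)|=2$ and $|K_3(\Z)|=48$, as the remark following that lemma notes), with the homology statement and dimension count obtained by the same routine $\F_p$-dualization you describe. Your added care about equivariance of the universal coefficient isomorphism and about $\LL$ possibly being reducible when $p \mid N$ is consistent with the paper's footnote on that point and introduces no discrepancy.
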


\subsection{Remarks on cohomology in higher degrees} \label{section:higher}
One may continue computations as above in higher degrees, although the analysis becomes more
and more intricate.
Consider, for example,  the group $H^3(\Gamma_N(p^m),\F_p)$ when~$F = \Q$. 
For $p > 3$, we should have~$\Htw_3(\F_p) = 0$. The
Hochschild--Serre spectral sequence $H_i(G_N(p^m),\Htw_j(\SL_N,\F_p)) \Rightarrow
H_{i+j}(\Gamma_N(p^m),\F_p)$ would then yield:
 \begin{center}
 {\small
\xymatrix @R=0.6mm @C=0.25cm {
& & \\
 &  & \ar @{-}[dddddddd] \\
\\& q
\\
\\
& 3 & & 0 & 0 \\
& 2 & &  \F_p &  \LL & \wedge^2 \LL&   \\
& 1 & & 0 & 0 &  0 & 0  \\
& 0 & & \F_p & \LL & \wedge^2 \LL&  \wedge^3 \LL & \wedge^4 \LL \\
& &  \ar @{-}[rrrrrr] & & & & & &  & & & &\\
& & & 0 & 1 & 2 & 3   & 4   &\\
& & &
}}
\end{center}
Since $H^2(\Gamma_N(p^m),\F_p) = \wedge^2 \LL \oplus \F_p$, this would give an exact sequence:
$$0 \rightarrow \wedge^3 \LL \rightarrow H^3(\Gamma_N(p^m),\F_p) \rightarrow \LL \rightarrow
\wedge^4 \LL.$$
In this case, the map $H^3(\Gamma_N(p^m),\F_p) \rightarrow H^3(\Gamma_N(p^{m+1}),\F_p)$ is zero.
Hence the order of $H^3(\Gamma_N(p^m),\F_p)$ should be the order of $\wedge^3 \LL$, up to
an error that is bounded by the order of $\LL$. 
More generally, we have:
\begin{lemma}  For $F = \Q$, all primes~$p$, and all sufficiently large~$N$, the natural edge map:
$$\wedge^k \LL = H^k(G_N(p^m),\F_p) \rightarrow H^k(\Gamma_N(p^m),\F_p)$$
has kernel and cokernel whose dimensions  are $O(N^{2(k-2)})$, where the implied constant does
not depend on $N$. In particular,
$$\dim H^k(\Gamma_N(p^m),\F_p) = \binom{N^2-1}{k} +O(N^{2(k-2)})
= \frac{N^{2k}}{k!}- \binom{k+1}{2}  \frac{N^{2k-2}}{k!} + O(N^{2k-4})$$
\end{lemma}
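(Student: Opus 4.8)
The plan is to read off both the kernel and the cokernel of the edge map directly from the Hochschild--Serre spectral sequence already in play,
$$E_2^{i,j} = H^i(G(p^m),\Htw^j(\F_p)) \Rightarrow H^{i+j}(\Gamma(p^m),\F_p).$$
Since $p>2$ the group $G(p^m)$ is powerful, so by Lazard its mod-$p$ cohomology is the exterior algebra $\wedge^*\LL$, and the action of $G(p^m)\subseteq\SL_N(\OL_p)$ on each $\Htw^j(\F_p)$ is trivial by Theorem~\ref{theorem:MF}; hence $E_2^{i,j}=\wedge^i\LL\otimes\Htw^j(\F_p)$. The edge map in the statement is exactly the inflation $E_2^{k,0}=\wedge^k\LL\to H^k(\Gamma(p^m),\F_p)$, which factors as the surjection $E_2^{k,0}\twoheadrightarrow E_\infty^{k,0}$ followed by the inclusion $E_\infty^{k,0}\hookrightarrow H^k(\Gamma(p^m),\F_p)$ of the deepest nonzero step of the filtration.

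First I would identify the two error terms with explicit subquotients. The kernel of the edge map is $\ker(E_2^{k,0}\to E_\infty^{k,0})$, namely the sum of images of the incoming differentials $d_r\colon E_r^{k-r,r-1}\to E_r^{k,0}$ for $r\ge 2$; each such source is a subquotient of $\wedge^{k-r}\LL\otimes\Htw^{r-1}(\F_p)$. The cokernel of the edge map is $H^k(\Gamma(p^m),\F_p)/E_\infty^{k,0}$, whose associated graded is $\bigoplus_{j\ge 1}E_\infty^{k-j,j}$, each summand a subquotient of $\wedge^{k-j}\LL\otimes\Htw^{j}(\F_p)$.

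The crucial input, and what makes the estimate work, is that $\dim_{\F_p}\Htw^j(\F_p)$ is finite and independent of $N$ in the stable range (Theorem~\ref{theorem:MF} together with Theorem~\ref{theorem:content}), with $\Htw^0(\F_p)=\F_p$ and $\Htw^1(\F_p)=0$ (the latter from $(*)$, which is implied by $({*}{*}{*})$). Using $\Htw^1=0$, every contributing term above lies in a row $j\ge 2$, hence is bounded by $\dim\wedge^{k-j}\LL=\binom{N^2-1}{k-j}=O(N^{2(k-j)})$ times a constant depending only on $\dim\Htw^j$. For the cokernel the dominant contribution is $j=2$, giving $O(N^{2(k-2)})$; for the kernel the first nonzero source sits at $(k-3,2)$ and is even smaller, $O(N^{2(k-3)})$. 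Thus both kernel and cokernel are $O(N^{2(k-2)})$, with the implied constant independent of $N$.

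Finally I would assemble the count,
$$\dim H^k(\Gamma(p^m),\F_p)=\dim\wedge^k\LL-\dim\ker+\dim\coker=\binom{N^2-1}{k}+O(N^{2(k-2)}),$$
and expand $\binom{N^2-1}{k}=\tfrac{1}{k!}\prod_{i=1}^{k}(N^2-i)=\tfrac{N^{2k}}{k!}-\binom{k+1}{2}\tfrac{N^{2(k-1)}}{k!}+O(N^{2(k-2)})$, which is the asserted formula. The only genuine obstacle is conceptual rather than computational: one must know \emph{a priori} that the fibre contributions $\Htw^j(\F_p)$ have $N$-independent dimension, so that they are truly lower order relative to the exterior powers of the growing module $\LL$. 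Granting that, the argument is a uniform bookkeeping of subquotients along the antidiagonals of a spectral sequence whose bottom row is $\wedge^*\LL$.
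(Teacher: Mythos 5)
Your proposal is correct and follows essentially the same route as the paper, whose entire proof is the one-line observation that the lemma follows from the Hochschild--Serre spectral sequence because the first row vanishes ($\Htw^1(\F_p)=0$). Your explicit bookkeeping --- identifying the kernel with images of differentials $d_r\colon E_r^{k-r,r-1}\to E_r^{k,0}$ (first possible at $E_3^{k-3,2}$), the cokernel with $\bigoplus_{j\ge 2}E_\infty^{k-j,j}$, and invoking the $N$-independence of $\dim\Htw^j(\F_p)$ so that each term is $O(N^{2(k-j)})$ --- is exactly the argument the paper leaves implicit, and your binomial expansion matches the stated asymptotic.
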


\begin{proof} This follows from the the Hochschild--Serre spectral sequence, noting that
the first row is zero (since $\Htw^1(\SL_N,\F_p) = 0$).
\end{proof}

\begin{remark} \emph{It follows from Theorem~1.5 of~\cite{CFN} that,
for all~$F$ and~$p$, the quantity $\dim H^k(\Gamma_N(p^m),\F_p)$ (in the stable range)
is actually a \emph{polynomial} in $N$. 
}
\end{remark}

\begin{remark} \emph{One obtains a corresponding result for number fields, except $\Htw^1(\SL_N,\F_p)$
does not vanish in general if $F$ contains $p$th roots of unity; and thus one only obtains the estimate
(without any assumption on~$p$ or~$F$):
$$\dim H^k(\Gamma_N(p^m),\F_p) = \frac{N^{2kd}}{k!} + O(N^{2 k d -2}).$$
}
\end{remark}

\begin{remark}
\emph{
One may sensibly define the stable homology groups $\Hs_*(\Gamma_N(p^m),\Z_p)$
and  $\Hs_*(\Gamma_N(p^m),\F_p)$ for sufficiently large~$N$ to be the images
of $\Htw_*(\SL_N,\Z_p)$ and $\Htw_*(\SL_N,\F_p)$  in $H_*(\Gamma_N(p^m),\Z_p)$ and $H_*(\Gamma_N(p^m),\F_p)$ respectively. Our results certainly imply that the action
of $G_N/G_N(p^m)$ on $\Hs_*(\Gamma_N(p^m),\Z_p)$ is trivial in the stable range. Moreover, one expects that these groups
should be directly related (perhaps even equal) to the continuous homology groups
$\Hc_*(\Ymn,\Z_p)$, which then relate to the $K$-theory (with coefficients) of the rings $\Z/p^m\Z$.
Indeed, one can see a reflection of the calculation of these groups (in the simplest situations) in the computations above.
}
\end{remark}

\section{Partially Completed \texorpdfstring{$K$}{K}-groups}
\label{section:partial}

Suppose that $F$ is an number field, and that $\p | p$ is a prime in $\OL_F$.  Then one can apply the analysis
above to the partially completed homology groups
$$\Htw_{*}(\SL_N,\p,\Z_p) := \lim_{\leftarrow} H_*(\Gamma_N(\p^r),\Z_p).$$
(More generally, one can do this for any set of primes dividing $p$.)
The arguments of~\cite{CEA} show that these groups do not depend on~$N$ for sufficiently large~$N$
(and have a trivial action of~$\SL_N(\OL_{\p})$)
and for such~$N$ we write~$\Htw_*(\SL,\p,\Z_p)$.
Although we no longer have recourse to the Poitou--Tate sequence (which requires
that $S$ contain all places above $p$),
it turns out that these groups are even simpler to understand than the standard
completed cohomology groups  under the following favorable circumstances.
In particular, we shall find contexts in which all the partially completed homology groups (beyond~$\Htw_0$)
vanish identically.

\medskip

Let $F/\Q$ be an imaginary quadratic field. Suppose that $p$ splits in $F$, and let $v|p$
be a place above $p$. Let $\GammaG$ be the Galois group of the maximal 
pro-$p$ extension of $F(\zeta_p)$ over $F$
unramified outside the places $S$ above $p$. Let $D_v$ be the decomposition group at $v$.

\begin{df}  \label{df:veryregular} Let $p$ be a prime that splits in an imaginary quadratic field $F/\Q$.
The prime $p$ is {\bf very regular\rm} if the map:
$$\Gal(\Qbar_p/\Q_p) \rightarrow D_v \subseteq \GammaG$$
is surjective for either $v|p$.
\end{df}

If the result is true for one~$v|p$, then it is also true for the other (the images of~$D_v$ for~$v|p$ are 
permuted by the action of~$\Gal(F/\Q)$.)

 \begin{lemma} \label{lemma:equals} Suppose that $p$ is very regular. Then the  map $H^i(G_S,M) \rightarrow H^i(D_v,M)$ is an isomorphism for all $i$ and all $G_S$-modules $M$ that are sub-quotients
 of the Tate twists $\Z_p(n)$ for any $n \in \Z$.
  \end{lemma}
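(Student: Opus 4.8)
The claim is that, under the ``very regular'' hypothesis, restriction $H^i(G_S, M) \to H^i(D_v, M)$ is an isomorphism for all $i$ and all $M$ that are subquotients of Tate twists $\Z_p(n)$. The plan is to reduce the statement to the single decomposition group $D_v$ by exploiting the surjectivity $\Gal(\Qbar_p/\Q_p) \twoheadrightarrow D_v \subseteq \Gamma$ from Definition~\ref{df:veryregular}. First I would observe that $\Gamma$ is, by definition, the Galois group of the maximal pro-$p$ extension of $F(\zeta_p)$ over $F$ unramified outside $S$, and that the modules in question (subquotients of $\Z_p(n)$) factor their $G_S$-action through $\Gamma$ once we pass to the relevant finite quotient, because the cyclotomic character lands in $1 + p\Z_p$ after adjoining $\zeta_p$ (for $p>2$). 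So I would first replace $G_S$ by $\Gamma$ and reduce to showing $H^i(\Gamma, M) \simeq H^i(D_v, M)$.

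\textbf{The key step} is then to upgrade the surjection $D_v \twoheadrightarrow \Gamma$ (equivalently $\Gal(\Qbar_p/\Q_p)\to D_v\subseteq\Gamma$ being surjective, so $D_v = \Gamma$ as subgroups, i.e.\ the inclusion $D_v \hookrightarrow \Gamma$ is onto) into the statement that the inclusion of the decomposition group is actually an \emph{isomorphism} $D_v \simeq \Gamma$. Very regularity says precisely that $D_v$, which is a priori just a subgroup of $\Gamma$, is in fact all of $\Gamma$. Once $D_v = \Gamma$, the restriction map $H^i(\Gamma, M) \to H^i(D_v, M)$ is the identity and there is nothing left to prove. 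Thus the real content is interpreting the surjectivity hypothesis correctly: the map $\Gal(\Qbar_p/\Q_p) \to \Gamma$ factors through the local Galois group at $v$ (since $v|p$ and the extension is ramified only at $S$), its image is exactly $D_v$, and surjectivity onto $\Gamma$ forces $D_v = \Gamma$.

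\textbf{The main obstacle} I anticipate is the bookkeeping in the reduction from $G_S$ to $\Gamma$: one must check that cohomology of $G_S$ with coefficients in a subquotient $M$ of $\Z_p(n)$ can be computed on the quotient $\Gamma$. This is not automatic, since $\Gamma$ is a quotient of $G_S$ by the subgroup fixing $F(\zeta_p)$ and acting trivially on the pro-$p$ part; I would handle it via the Hochschild--Serre spectral sequence for $1 \to H \to G_S \to \Gamma \to 1$ (where $H$ is that kernel), checking that $H$ acts trivially on $M$ and that the relevant inflation maps are isomorphisms, using that $\Gal(F(\zeta_p)/F)$ has order prime to $p$ and the modules are pro-$p$. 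The same reduction must be performed compatibly on the local side for $D_v$, so that the comparison of spectral sequences is functorial in the inclusion $D_v \hookrightarrow \Gamma$. After these reductions the isomorphism is formal, so I expect the argument to be short once the identification $D_v = \Gamma$ is pinned down.
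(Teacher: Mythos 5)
There is a genuine gap, and it lies exactly in what you call the key step. You read the hypothesis as saying $D_v = \Gamma$ and conclude that the map in the lemma is restriction along an equality of groups, hence the identity. But the map the lemma is about is the \emph{localization} map into the cohomology of the full local Galois group $\Gal(\Qbar_p/F_v)$ --- this is forced by how the lemma is used (to show $K_n(\OL_F;\Z_p) \to K_n(\OL_{\p};\Z_p) = K_n(\Z_p;\Z_p)$ is an isomorphism, which via Theorem~\ref{theorem:V} requires comparison with $H^i(G_v,\Z_p(n))$ for the local absolute Galois group), and by the paper's own proof, which computes $|H^1(D_v,M)| = |H^0(D_v,M)|\cdot|H^0(D_v,M^*)|\cdot|M|$ using the local Euler characteristic formula, valid for $\Gal(\Qbar_p/F_v)$ but not for a pro-$p$ quotient such as $\Gamma$. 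Very regularity says $\Gal(\Qbar_p/\Q_p)$ \emph{surjects} onto $\Gamma$; the kernel of this surjection is enormous, so the localization map becomes an inflation map along a surjection. Inflation is injective on $H^1$, but it is by no means formally an isomorphism: the obstruction $H^1(K,M)^{\Gamma}$ for the kernel $K$ does not vanish for free, and in higher degrees nothing is automatic. If your reading were correct, the lemma would be a tautology of Definition~\ref{df:veryregular}, and the hypothesis that $F$ is imaginary quadratic would be idle --- whereas the paper's proof explicitly flags the point at which imaginary-quadraticity enters.

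What the paper actually does, after the reduction $H^i(G_S,M)\simeq H^i(\Gamma,M)$ (your first reduction, which is fine and matches the paper's opening step), is genuinely arithmetic: (i) injectivity on $H^1$ by inflation--restriction, as above; (ii) very regularity forces the dual Selmer group $H^1_{\Sigma^*}(F,M^*)$ (classes locally trivial at $v|p$) to vanish, since such classes die in $H^1(D_v,M^*)$; (iii) the Greenberg--Wiles formula combined with the local Euler characteristic formula then gives $|H^1(G_S,M)| = |H^1(D_v,M)|$ --- this order count is where $F$ being imaginary quadratic is used essentially, through the archimedean terms --- upgrading the injection to an isomorphism; (iv) the global and local Euler characteristic formulae give $|H^2(G_S,M)| = |H^2(D_v,M)|$; and (v) bijectivity on $H^2$ is reduced by d\'{e}vissage and the $5$-lemma to injectivity of $H^2(G_S,\F_p(n)) \to H^2(D_v,\F_p(n))$, whose only nontrivial case $n=1$ is handled by a diagram chase through a non-split extension $V$ of $\F_p$ by $\F_p(1)$, showing $H^1(\F_p)\to H^1(\F_p(1))$ is surjective on both sides. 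None of this machinery appears in your proposal, and it cannot be dispensed with: the statement is a strong Leopoldt-type assertion about global classes being detected locally, not a formal consequence of $D_v$ filling up $\Gamma$.
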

  
 Before proving this lemma, we note the following consequence, which is the reason for considering 
 very regular primes.
 
 \begin{corr}  \label{corr:replace} If~$p$ is a very regular prime, then the map~$K_n(\OL_F) \otimes \Z_p \rightarrow
 K_n(\OL_{\p};\Z_p)$ is an isomorphism for~$n > 1$.
 \end{corr}
 
This follows from the description of these groups in terms of Galois cohomology (Theorem~\ref{theorem:V})
and the fact that the corresponding map on Galois cohomology is an isomorphism.
  
\begin{proof}[Proof of Lemma~\ref{lemma:equals}] For such modules~$M$ as in the statement of the lemma, there is a canonical isomorphism $H^i(G_S,M)
\simeq H^i(\GammaG,M)$, because the cohomology groups will only depend on the pro-$p$
completion of the fixed field of any relevant module. 
There is a  natural isomorphism
$H^0(G_S,M) = H^0(D_v,M_v)$, 
and the map:
$$H^1(G_S,M) \rightarrow H^1(D_v,M_v),$$
is injective by inflation--restriction. 
 Let $\Sigma$ denote the set of Selmer conditions where no condition is imposed
 at $v|p$, and the dual Selmer condition $\Sigma^*$ consists of classes
 that are totally trivial locally at $v|p$, and the classes are unramified outside~$p$. We have
$H^1_{\Sigma}(F,M) = H^1(G_S,M)$ by definition, where~$H^1_{\Sigma}(F,M)$ as usual
denotes classes in~$H^1(F,M)$ which satisfy the local conditions corresponding to~$\Sigma$. Since $p$ is very regular,
the group $H^1_{\Sigma^*}(F,M^*) = 0$ because any such class is trivial in $H^1(D_v,M^*)$.
To this point, we have  not used the fact that $F$ is
an imaginary quadratic field. This condition  arises in the numerical computation
of Selmer groups via the  formula of Greenberg--Wiles~\cite{Gre,W,Fermat} and the global Euler characteristic
formula~\cite{Milne,TateDual}. From the Greenberg--Wiles formula, we have
$$|H^1(G_S,M)| = \frac{|H^0(F,M)|}{|H^0(F,M^*)|} \cdot \frac{|H^1(D_v,M)|^2}{|H^0(D_v,M)|^2}
 \cdot \frac{1}{|M|}.$$
 From the local Euler characteristic formula,
 $$|H^1(D_v,M)| = |H^0(D_v,M)| \cdot |H^2(D_v,M)| 
 \cdot |M| =  |H^0(D_v,M)| \cdot |H^0(D_v,M^*)| \cdot |M|,$$
 and thus
$$\begin{aligned} |H^1(G_S,M)| = & \ \frac{|H^0(F,M)|}{|H^0(F,M^*)|} \cdot \frac{|H^1(D_v,M)|^2}{|H^0(D_v,M)|^2}
 \cdot \frac{1}{|M|} \\
 = & \ \frac{|H^0(F,M)|}{|H^0(F,M^*)|} \cdot \frac{|H^1(D_v,M)|}{|H^0(D_v,M)|} \cdot
  \frac{ |H^0(D_v,M)| \cdot |H^0(D_v,M^*)| \cdot |M|}{|H^0(D_v,M)|}
 \cdot \frac{1}{|M|} \\
  = & \ \frac{|H^0(F,M)|}{|H^0(F,M^*)|} \cdot |H^1(D_v,M)| \cdot \frac{1}{|H^0(D_v,M)|} \cdot 
  |H^0(D_v,M^*)| \\
 = & \ \frac{|H^0(F,M)|}{|H^0(D_v,M)|} \cdot \frac{|H^0(D_v,M^*)|}{|H^0(F,M^*)|} \cdot |H^1(D_v,M)|
 \\
  = &\  |H^1(D_v,M)|.
  \end{aligned}$$
  It follows  that $H^1(G_S,M) \rightarrow H^1(D_v,M)$, which we already showed was an injection, is actually an isomorphism.
  On the other hand, by the global and local Euler characteristic formulae,
  $$\begin{aligned}
  |H^2(G_S,M)| = & \ \frac{|H^1(G_S,M)|}{|H^0(G_S,M)|} \cdot \frac{H^0(G_{\C},M)}{|M|^2} \\
  = & \ \frac{|H^1(D_v,M)|}{|H^0(D_v,M)|}  \cdot \frac{1}{|M|}
  = |H^2(D_v,M)|. \end{aligned}$$
  By the $5$-lemma and d\'{e}vissage, to prove the lemma 
  it suffices to show that the maps 
  $$H^2(G_S,\F_p(n)) \rightarrow H^2(D_v,\F_p(n))$$
   are
  injective (equivalently, isomorphisms) for any $n \in \Z$ (we use here the fact that the local and global
  cohomology groups
  have cohomological dimension~$2$ in this context). 
  The only non-trivial case (i.e., when both groups are not trivial) is $n = 1$. In this case, there is a commutative diagram:
  $$
  \begin{diagram}
  H^1(G_S,V) & \rTo & H^1(G_S,\F_p) & \rTo & H^1(G_S,\F_p(1)) \\
  \dEquals & & \dEquals & & \dTo \\
  H^1(D_v,V) & \rTo & H^1(D_v,\F_p) & \rTo & H^1(D_v,\F_p(1)) \\
  \end{diagram}
  $$
 where $V$ is any non-split extension of $\F_p$ by $\F_p(1)$.
 Since $V$ is non-split, $H^0(V^*) = 0$, and hence $H^2(V) = 0$.
Thus the maps $H^1(\F_p) \rightarrow H^1(\F_p(1))$ are surjective,
 and it follows (by commutivity) that map $H^1(G_S,\F_p(1)) \rightarrow H^1(D_v,\F_p(1))$ must be
 surjective, hence 
 an isomorphism.
 \end{proof}

We now give a numerical characterization of very regular primes.

\begin{lemma} \label{lemma:crit} Let $F$ be an imaginary quadratic field such that $p > 2$ splits completely,
and let $v|p$ be a place above $p$. Let $w$ be the unique place above $v$ in $F(\zeta_p)$.
Suppose that:
\begin{enumerate}
\item  The maximal exponent $p$ abelian extension of $F(\zeta_p)$ that is unramified outside $w$
is cyclic.
\item If $p = \p \pbar$ in $\OL_F$ and $h = |\Cl(\OL_F)|$, then
the projection of $\p^h$ generates $(1 + \pbar)/(1 + \pbar^2)$.
\end{enumerate}
Then $p$ is very regular. Conversely, if either of these conditions fail, then $p$ is not very regular.
\end{lemma}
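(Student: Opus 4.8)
The plan is to translate very regularity into the nonexistence of a certain everywhere-split $\Z/p$-extension, and then to detect that extension by a Hilbert-symbol computation. First I would reduce to the Frattini quotient: since the relevant group is pro-$p$ over $L := F(\zeta_p)$ and $F(\zeta_p)/F$ is totally ramified at $v$ (so the decomposition group already surjects onto the tame quotient $\Gal(L/F)$), the topological Nakayama lemma shows that $\Gal(\Qbar_p/\Q_p) \to D_v \subseteq \Gamma$ is surjective if and only if no nontrivial homomorphism $\chi \colon \Gamma \to \F_p$ restricts to zero on $D_v$. Such a $\chi$ is exactly a nontrivial $\Z/p$-extension of $L$ (Galois over $F$), unramified outside $S = \{w, \bar w\}$, in which $w$ splits completely. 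But splitting completely at $w$ forces the extension to be unramified at $w$, so any such witness is in fact unramified outside $\bar w$. Thus very regularity is equivalent to: there is no nontrivial $\Z/p$-extension of $L$ unramified outside $\bar w$ in which $w$ splits completely.

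Next, using Kummer theory (valid since $\mu_p \subseteq L$) I would write every such extension as $L(\sqrt[p]{\beta})$ and bound the supply of candidates. Let $c$ be the nontrivial element of $\Gal(F/\Q)$; since $L/\Q$ is Galois, $c$ swaps $w$ and $\bar w$, so the maximal exponent-$p$ abelian extension of $L$ unramified outside $\bar w$ is isomorphic to the one unramified outside $w$ appearing in Condition (1). Hence Condition (1) is precisely the statement that this space of candidate extensions is at most one-dimensional over $\F_p$, leaving a single nontrivial $N_0 = L(\sqrt[p]{\beta})$ to test. If Condition (1) fails this space has dimension at least two, and the ``$w$ splits completely'' requirement (vanishing of a single Frobenius functional) still leaves a nonzero class; this produces a witness and shows $p$ is not very regular.

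It then remains to decide whether $w$ splits completely in $N_0$, i.e. whether $\beta \in (L_w^\times)^p$, and Condition (2) is exactly this criterion. Setting $\p^h = (\alpha)$, which is principal because $h = |\Cl(\OL_F)|$, the element $\alpha$ is a unit at every place except $w$, so all Hilbert symbols $(\alpha, \beta)_u$ with $u \notin \{w, \bar w\}$ vanish and the product formula gives $(\alpha, \beta)_w = (\alpha, \beta)_{\bar w}^{-1}$. Nonvanishing of the right-hand symbol forces $\beta \notin (L_w^\times)^p$, hence that $w$ does not split, hence very regularity; and the symbol $(\alpha, \beta)_{\bar w}$ is nonzero exactly when the image of $\alpha$ is a generator of $\OL_\pbar^\times \otimes \F_p \cong (1+\pbar)/(1+\pbar^2)$, which is Condition (2). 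Conversely, if Condition (2) fails then $\alpha$ pairs trivially, $\beta$ becomes a $p$-th power at $w$, the prime $w$ splits in $N_0$, and $N_0$ is a witness, so $p$ is not very regular.

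The main obstacle is the Hilbert-symbol computation at $\bar w$. This is a wildly ramified place of residue characteristic $p$, so $(\alpha, \beta)_{\bar w}$ cannot be evaluated by a tame formula; one must use an explicit reciprocity law to reduce it to the pairing of $\alpha \bmod (\OL_\pbar^\times)^p$ against the class of $\beta$ in $(1+\pbar)/(1+\pbar^2)$, and check that this pairing is a perfect duality in the relevant $\Gal(L/F)$-eigenspace. Packaged with this is the descent from $L$-level data to the $F$-level quantities appearing in Condition (2), and the need to make the dimension counts exact rather than merely one-sided --- for which I would invoke the Greenberg--Wiles formula and the global Euler characteristic exactly as in the proof of Lemma~\ref{lemma:equals}, so that the two conditions together are equivalent to, and not just sufficient for, very regularity.
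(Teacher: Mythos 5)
Your first two steps match the paper's strategy: the Frattini reduction, and the reading of condition (1) as the statement that (after using complex conjugation to swap $w$ and $\bar{w}$) the space of $\Z/p$-extensions of $L = F(\zeta_p)$ unramified outside $\bar{w}$ is a single line, leaving one candidate $N_0 = L(\sqrt[p]{\beta})$ to test. The gap is in your Hilbert-symbol test for condition (2), and it is genuine, in two ways. First, the inference ``$\alpha$ pairs trivially $\Rightarrow \beta \in (L_w^\times)^p$'' is invalid: triviality of $(\alpha,\beta)_w$ says only that $\alpha$ is a local norm from $L_w(\sqrt[p]{\beta})$, an index-$p$ condition, not that the local extension is trivial. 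Concretely, since $N_0/L$ is unramified at $w$, one has $(\alpha,\beta)_w = \Frob_w^{v_w(\alpha)}$ with $v_w(\alpha) = h(p-1)$; when $p \mid h$ this exponent is $0 \bmod p$, so every symbol involving $\alpha$ at $w$ vanishes identically, whatever $\beta$ is and whether or not $w$ splits, and by the product formula the symbol at $\bar{w}$ vanishes too. Your pairing therefore detects nothing precisely in the case that condition (2), phrased with $h = |\Cl(\OL_F)|$, is designed to handle. Second, your claim that $(\alpha,\beta)_{\bar{w}} \neq 1$ exactly when the image of $\alpha$ generates $(1+\pbar)/(1+\pbar^2)$ is not merely the flagged ``obstacle''; it is false in general. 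If $\Cl(\OL_F)\otimes\Z_p \cong \Z/p$ is generated by $[\p]$ and $\RCl(\pbar^2)\otimes\Z_p \cong \Z/p^2$ is cyclic generated by $\p$, then the unique candidate line is the base change of the $p$-Hilbert class field of $F$, which is unramified at $\bar{w}$ as well; its local Kummer class at $\bar{w}$ lies in the unramified line, which annihilates all units, so $(\alpha,\beta)_{\bar{w}} = 1$ even though condition (2) holds here (and $p$ is in fact very regular, since $\Frob_w = [\p]$ generates). The symbol against the single principal element $\alpha$ only ever sees $\Frob_w^{h(p-1)}$, which is too coarse once $p \mid h$.

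The paper avoids Kummer theory and wild reciprocity altogether. Having reduced via (1) to a single line, it descends to $F$: because $[L:F]$ is prime to $p$, the nontrivial exponent-$p$ quotient of $\RCl(\pbar^\infty)\otimes\Z_p$ base-changes to the candidate line, so very regularity becomes the statement that the maximal abelian $p$-extension of $F$ unramified outside $p$ and completely split at $v$ is trivial, i.e.\ that the class of $\p$ generates $\RCl(\pbar^\infty)\otimes\Z_p$. Nakayama reduces this to conductor $\pbar^2$, where the group has order $p \cdot h_p$ and sits in the exact sequence with kernel $(1+\pbar)/(1+\pbar^2)$ and quotient $\Cl(\OL_F)\otimes\Z_p$; the decisive elementary point is that $\p^h$ has order $p$ if and only if $\p$ has order $p h_p$, which equals the group order and so forces the group to be cyclic with generator $\p$ --- this is how the $p \mid h$ case is handled uniformly, exactly where your symbol computation breaks down. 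If you wish to salvage your route, replace the pairing against $\alpha$ by the Artin symbol of the prime $w$ itself in the relevant ray class quotient; but at that point you have reproduced the paper's class-field-theoretic argument, and the explicit reciprocity computation at the wildly ramified place $\bar{w}$ never needs to be done.
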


\begin{proof} To show the map of pro-$p$ groups $D_w \rightarrow \GammaG$ is surjective,
it suffices to prove that $D_w$ surjects onto the Frattini quotient of $\GammaG$. The cokernel
of this map corresponds to an abelian exponent $p$ extension of $F(\zeta_p)$ that is unramified at $w$ and at all primes away from $p$.
If the cokernel is not cyclic, then there exists a quotient where $w$ splits completely, and
$p$ is not very regular. Let us assume it is cyclic.
By class field theory,
$F$ itself admits an abelian $p$-extension $H/F$ unramified at $v$ and all primes away from $p$, and hence the extension over
$F(\zeta_p)$ descends to $F$.  It suffices to show that this extension is inert at $v$ if  and only if
the second condition holds. Equivalently, it suffices to show that the maximal abelian 
$p$-extension
of $F$ completely split at $v|p$ and unramified outside $p$ is trivial.
Let $\p$ and $\pbar$ be the primes in $\OL_F$ corresponding to the place $v|p$ and its conjugate. Recall that the $p$-part of the ray class group of conductor $\pbar^2$ lives in an 
exact sequence:
$$\OL^{\times}_F \cap (1 + \pbar)  \rightarrow (1 + \pbar)/(1 + \pbar^2) \rightarrow
\RCl(\pbar^2) \otimes \Z_p \rightarrow \Cl(\OL_F) \otimes \Z_p \rightarrow 0.$$
Since~$F$ is an imaginary quadratic field,~$\OL^{\times}_F$
consists entirely of roots of unity, and so the first term~$\OL^{\times}_F \cap (1 + \pbar)$ vanishes because
we are assuming that~$p$ is prime to~$w_F$ (this follows because~$p > 2$ is unramified in~$F$).
It suffices to show that $\RCl(\pbar^2) \otimes \Z_p$ is generated by $\p$,
since, by Nakayama, any generator of $\RCl(\pbar^2) \otimes \Z_p$
lifts to a generator of $\RCl(\pbar^m) \otimes \Z_p$ for any $m$. Since $\RCl(\pbar^2) \otimes \Z_p$ is an abelian
$p$-group of order $p \cdot h_p$ (where $h_p$ is the $p$-part of $h$), it is cyclic with generator $\p$ if and only if 
$\p^{h}$ has order $p$. Yet $\p^h$ is non-trivial in $\RCl(\pbar^2) \otimes \Z_p$
if and only if its projection to $ (1 + \pbar)/(1 + \pbar^2)$ is non-trivial.
\end{proof}

Note that the projection of any principal ideal $(\alpha)$ onto $(1 + \pbar)/(1 + \pbar^2)$
is given by the image of $\alpha^{p-1}$.

We give an alternate numerical  formulation of the previous lemma:

\begin{lemma} \label{lemma:crit2}
Let $F$ be an imaginary quadratic field such that $p > 2$ splits completely,
and let $v|p$ be a place above $p$. Let $\chi$ be the odd quadratic character corresponding
to $F$. Then $p$ is very regular if and only if the following hold:
\begin{enumerate}
\item $p$ is a regular prime, that is, $p > 3$ does not divide $\zeta(-1)$, $\zeta(-3)$, $\zeta(-5), \ldots,
\zeta(4-p)$.
\item $p$ does not divide any of the $L$-values $L(\chi,-2)$, $L(\chi,-4)$, $L(\chi,-6), \ldots, L(\chi,3-p)$.
\item If $p = \p \pbar$ in $\OL_F$ and $h = |\Cl(\OL_F)|$, then
the projection of $\p^h$ generates $(1 + \pbar)/(1 + \pbar^2)$.
\end{enumerate}
\end{lemma}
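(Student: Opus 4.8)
The plan is to show that Lemma~\ref{lemma:crit2} is simply a translation of Lemma~\ref{lemma:crit} into classical analytic/numerical language, where each of the three conditions of Lemma~\ref{lemma:crit2} corresponds to the conditions of Lemma~\ref{lemma:crit}. The third condition is identical in both statements, so no work is needed there. The real content is to show that conditions (1) and (2) of Lemma~\ref{lemma:crit2} together are equivalent to condition (1) of Lemma~\ref{lemma:crit}, namely that the maximal exponent-$p$ abelian extension of $F(\zeta_p)$ unramified outside $w$ is cyclic.

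First I would analyze the Galois module structure of the relevant class groups. The group $\Gal(F(\zeta_p)/F)$ is cyclic of order $p-1$, so its action on the $p$-part of the relevant class group decomposes into eigenspaces indexed by powers $\omega^i$ of the Teichm\"uller character. The maximal exponent-$p$ abelian extension of $F(\zeta_p)$ unramified outside $w$ being cyclic is an assertion about the rank of a certain Galois cohomology group $H^1(G_S, \F_p(n))$-type object, which by reflection/Kummer theory relates to eigenspaces of the class group of $F(\zeta_p)$. Because $F(\zeta_p) = \Q(\zeta_p) \cdot F$ and $F$ corresponds to the quadratic character $\chi$, the eigenspaces split according to whether they involve the trivial twist of $\Q(\zeta_p)$ (governed by Kummer's criterion via $\zeta(1-k)$, i.e. Bernoulli numbers / values $\zeta(-1), \zeta(-3), \dots$) or the twist by $\chi$ (governed by the $L$-values $L(\chi, -2), L(\chi, -4), \dots$).

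The key technical step is applying the Herbrand--Ribet theorem together with the Main Conjecture (Mazur--Wiles) in the form relating the orders of eigenspaces of class groups to $p$-adic $L$-values. Concretely, the $\omega^{1-k}$-eigenspace of $\Cl(\Q(\zeta_p)) \otimes \Z_p$ is nontrivial precisely when $p \mid \zeta(1-k)$ for even $k$, which gives condition (1); the analogous statement with the quadratic twist by $\chi$ gives the nonvanishing criterion in terms of $L(\chi, 1-k)$, yielding condition (2). I would need to track carefully which Tate twists $\Z_p(n)$ actually contribute to the relevant local-global cohomology comparison in Lemma~\ref{lemma:equals}, i.e.\ which eigenspaces can obstruct cyclicity, and verify that these are exactly the even-indexed ones enumerated in conditions (1) and (2).

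The main obstacle will be getting the indexing of the characters and Tate twists exactly right, and confirming that the ranges $\zeta(-1), \ldots, \zeta(4-p)$ and $L(\chi,-2), \ldots, L(\chi,3-p)$ capture precisely the eigenspaces that can fail to vanish --- the parity and the shift by $\chi$ (which is odd) mean one set of conditions picks up the even Tate twists and the other the odd ones, and off-by-one errors in the functional-equation reflection are easy to make. I expect the forward implication (very regular $\Rightarrow$ conditions hold) and its converse to follow symmetrically once the dictionary is established, since Lemma~\ref{lemma:crit} already gives an \emph{if and only if} characterization; thus the bulk of the proof is the eigenspace bookkeeping plus one invocation of the analytic class number / Main Conjecture machinery to convert ``eigenspace nonzero'' into ``$p$ divides the relevant $L$-value.''
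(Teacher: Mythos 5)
Your high-level plan --- reduce to Lemma~\ref{lemma:crit} and translate via the eigenspace decomposition under $\Gal(F(\zeta_p)/F)$ together with Herbrand--Ribet and the Mazur--Wiles main conjecture --- shares the paper's main arithmetic inputs, but the logical skeleton has a genuine gap. You assert that conditions (1) and (2) of Lemma~\ref{lemma:crit2} are, \emph{on their own}, equivalent to condition (1) of Lemma~\ref{lemma:crit}, with condition (3) playing no role in that equivalence. That unconditional biconditional is not what is true, nor what the paper proves: the three conditions are entangled. Cyclicity in Lemma~\ref{lemma:crit}(1) is also obstructed by classes in the eigenspaces with twist $n \equiv 0, 1 \bmod (p-1)$ (the trivial twist, its $\chi$-twist coming from $\Cl(\OL_F)$, and the Kummer-theoretic $\F_p(1)$ classes), and these eigenspaces are invisible to the $L$-values in (1) and (2), which only detect twists $n \not\equiv 0,1 \bmod (p-1)$. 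The paper's proof of the converse direction accordingly \emph{assumes} condition (3): it uses (3) to show that the projection of the space of locally trivial classes onto $H^1(F,\F_p)$ vanishes, then the Greenberg--Wiles formula to dispose of the $\F_p(1)$ eigenspace, and only after that applies the main conjecture to the remaining twists. Your proposed factorization of the proof would stall exactly at these low twists.

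Two further missing ideas. First, in the forward direction it does not suffice that $p \mid \zeta(1-2n)$ or $p \mid L(\chi,-2n)$ produces a nontrivial eigenspace (Herbrand): to contradict very regularity --- surjectivity of $\Gal(\Qbar_p/\Q_p) \rightarrow D_v \subseteq \Gamma$ --- you need the resulting unramified extension of $\F_p$ by $\F_p(n)$ or $\F_p(n)\otimes\chi$ to split completely at the primes above $p$; an unramified class whose Frobenius is nontrivial is no obstruction at all (compare the Frobenius discussion in the proof of Lemma~\ref{lemma:crit}). The paper gets this splitting from the local vanishing $H^1(\Q_p,\F_p(n)) = 0$ for $n \not\equiv 0,1 \bmod (p-1)$, a step absent from your outline. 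Second, in the converse direction, when the offending class lies in an eigenspace on which complex conjugation acts by $+1$, the main conjecture does not apply directly; the paper first uses the Greenberg--Wiles formula to produce an everywhere unramified class in $H^1(\Q,V^*)$, an odd twist, and only then invokes the main conjecture. Your appeal to ``reflection'' gestures at this, but the specific duality step --- together with the case analysis on $\dim H^1(G_{F,S},\F_p(n))$, which is what guarantees the class lies in a single $\Gal(F/\Q)$-stable summand and is therefore unramified at \emph{both} primes above $p$ --- must be supplied for the argument to close.
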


\begin{proof} It suffices to show that the conditions are equivalent
to those of Lemma~\ref{lemma:crit}.  If either $p$ is not regular or divides one of the $L$-values, then,
by the main conjecture of Iwasawa theory~\cite{WilesMain},
there exists an unramified extension of $\F_p$ by either $\F_p(n)$ or $\F_p(n) \otimes \chi$
for some $n \ne 0,1 \mod (p-1)$. Since $H^1(\Q_p,\F_p(n)) = 0$ for such $n$, these extensions necessarily
split completely at primes above $p$, implying that $p$ is not very regular.  By 
Lemma~\ref{lemma:crit}  and its proof the same is true of the final condition.

Now suppose that
 $p$ is not very regular. Equivalently,  the subspace of $H^1(F(\zeta_p),\F_p)$ consisting
of classes that are trivial  away from $p$ and for some $w$ with $w|p$ is non-zero. The  group $\Gal(F(\zeta_p)/F)$ acts on this
space, and the third condition above implies that the projection of this space onto
$H^1(F,\F_p)$ is trivial. This implies the same for $H^1(F,\F_p(1))$ by
the Greenberg--Wiles formula.
Hence there exists a non-trivial $n \ne 0,1 \mod (p-1)$ such that
$$H^1(G_{F,S},\F_p(n)) \simeq H^1(G_{\Q,S},\F_p(n)) \oplus H^1(G_{\Q,S},\F_p(n) \otimes \chi)$$
contains a class $\eta$ that is unramified at $v|p$. 
Suppose that  $\dim H^1(G_{F,S},\F_p(n)) = 1$.  Then
$\eta \in H^1(\Q,V)$ where $V$ is either $\F_p(n)$ or $\F_p(n) \otimes \chi$.
As these spaces are $\Gal(F/\Q)$-invariant, it follows that $\eta$ is
also  unramified at the other prime dividing~$p$. Now either:
\begin{enumerate}
\item Complex conjugation acts on $V$ by $-1$, in which
case by the main conjecture one obtains a divisibility of $L$-values as above, or
\item Complex conjugation acts by $+1$ on $V$, in which case, by
the Greenberg--Wiles formula, there exists an everywhere unramified class in $H^1(\Q,V^*)$,
that also implies a divisibly of $L$-values (again by the main conjecture).
\end{enumerate}
Suppose now that $\dim H^1(G_{F,S},\F_p(n)) > 1$.  If either 
$H^1(G_{\Q,S},\F_p(n))$ or $H^1(G_{\Q,S},\F_p(n) \otimes \chi))$ is zero, the same
argument as above applies. Hence we may assume that
$H^1(G_{\Q,S},V) \ne 0$ where complex conjugation acts by $1$ on $V$, and the
result follows as in the previous case.
\end{proof}
 
 The following result shows that the partially completed
homology groups at very regular primes are particularly simple:

\begin{lemma}   Suppose that $p$ is very regular in a imaginary quadratic field $F/\Q$.
Let $\p$ denote a prime above $p$. Then the stable $\p$-completed homology groups:
$$\Htw_{n}(\SL,\p,\Z_p) = \lim_{\leftarrow} H_n(\SL,\Gamma(\p^r),\Z_p)$$
are trivial for all $n > 0$, and equal to $\Z_p$ for $n = 0$.
\end{lemma}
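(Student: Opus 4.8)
The plan is to run the partial analogues of Theorems~\ref{theorem:MF} and~\ref{theorem:content}, and then to show that the relevant homotopy fibre is $p$-adically contractible. First I would form the homotopy fibration $Y \to \BSL(\OL_F)^{+} \to \BSL(\OL_{\p})^{+}$, where $\OL_{\p} = \lim_{\leftarrow} \OL_F/\p^r$ so that the quotients appearing in the tower are exactly the $\SL_N(\OL_F/\p^r)$; here we complete only at the single prime $\p$, which is the modification anticipated in the remark following Theorem~\ref{theorem:content} (the global space $\BSL(\OL_F)^{+}$ is unchanged, only the local target shrinks). Exactly as in the proof of Theorem~\ref{theorem:content}, Lemma~\ref{lemma:spectral} applied to the Hochschild--Serre spectral sequences for the $\SL_N(\OL_{\p})$-action gives $\Htw_*(\p,\Z_p) \simeq \Hc_*(Y,\Z_p)$, the input being the partial analogue of Theorem~\ref{theorem:MF} that $\SL_N(\OL_{\p})$ acts trivially on $\Htw_*(\p,\Z_p)$. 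It therefore suffices to prove that $Y$ is $p$-adically contractible, i.e. that $\Hc_n(Y,\Z_p) = 0$ for $n > 0$ and $\Hc_0(Y,\Z_p) = \Z_p$.

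Next I would compute the $p$-completed homotopy of $Y$, which by definition are the partially completed $K$-groups $\Ktw_n(\p) = \pi_n(Y;\Z_p)$. Since $\mathrm{SK}_1(\OL_F) = 0$ (Bass--Milnor--Serre) and $\mathrm{SK}_1(\OL_{\p}) = 0$ (the ring is local), both $\BSL(\OL_F)^{+}$ and $\BSL(\OL_{\p})^{+}$ are simply connected, and for $n \ge 2$ their $p$-completed homotopy groups are $K_n(\OL_F)\otimes\Z_p$ and $K_n(\OL_{\p};\Z_p)$. The long exact sequence of the fibration then reduces the vanishing of $\Ktw_n(\p)$ for all $n \ge 1$ to the statement that the localization maps $K_n(\OL_F)\otimes\Z_p \to K_n(\OL_{\p};\Z_p)$ are isomorphisms for all $n \ge 2$. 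By Theorem~\ref{theorem:V} and its local counterpart for the single place $v=\p$, these maps are identified with the restriction maps $H^i(G_S,\Z_p(m)) \to H^i(D_v,\Z_p(m))$ for $i\in\{1,2\}$, where $G_S$ still uses all places above $p$ and $D_v$ is the decomposition group at $\p$; since $p$ is very regular these are isomorphisms by Lemma~\ref{lemma:equals}. Hence $\pi_n(Y;\Z_p) = 0$ for every $n \ge 1$, and $Y$ is connected.

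To pass from vanishing homotopy to vanishing homology I would argue at finite level. The same reasoning, using the finite-coefficient motivic spectral sequence underlying Theorem~\ref{theorem:V} together with Lemma~\ref{lemma:equals} applied to the subquotients $\Z/p^s(n)$ of $\Z_p(n)$, shows that each map $K_n(\OL_F;\Z/p^s) \to K_n(\OL_{\p};\Z/p^s)$ is an isomorphism, whence $\pi_n(Y;\Z/p^s) = \Ktw_n(\p;\Z/p^s) = 0$ for every $n \ge 1$ and every $s$. Since $Y$ is the homotopy fibre of a map of simply connected spaces it is nilpotent, so after $p$-completion the mod-$p^s$ Hurewicz theorem recalled above applies and gives $H_n(Y,\Z/p^s) = 0$ for $n > 0$. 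Taking the inverse limit over $s$ yields $\Hc_n(Y,\Z_p) = 0$ for $n > 0$ and $\Hc_0(Y,\Z_p) = \Z_p$, which combined with the first paragraph proves the lemma.

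The step requiring the most care is the matching, at finite level, between the $K$-theoretic localization maps and the Galois restriction maps: one must invoke the compatibility of the motivic spectral sequences asserted in Theorem~\ref{theorem:V}, so that the isomorphisms furnished by Lemma~\ref{lemma:equals} for each individual $\Z/p^s(n)$ assemble into isomorphisms of $K$-groups with $\Z/p^s$ coefficients. A secondary subtlety is that $\pi_1(Y) = \coker(K_2(\OL_F) \to K_2(\OL_{\p}))$ need not vanish integrally, only $p$-adically; I would handle this by passing to the $p$-completion of the nilpotent space $Y$ before applying the Hurewicz theorem, rather than by claiming that $Y$ itself is simply connected.
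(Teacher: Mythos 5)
Your proposal is correct and takes essentially the same route as the paper: form the fibre $Y$ of $\BSL(\OL_F)^{+} \rightarrow \BSL(\OL_{\p})^{+}$, identify $\Htw_*(\p,\Z_p)$ with $\Hc_*(Y,\Z_p)$ by the partial analogue of Theorem~\ref{theorem:content}, use Lemma~\ref{lemma:equals} together with Theorem~\ref{theorem:V} to see that the localization maps on $K$-theory are $p$-adic isomorphisms so that the partially completed $K$-groups vanish, and conclude by the Hurewicz theorem. The paper's proof is precisely this argument in compressed form, and your added care at $\pi_1(Y)$ (nilpotence of the fibre and passing to the $p$-completion before invoking the mod-$p^s$ Hurewicz theorem) merely makes explicit a step the paper leaves implicit.
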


\begin{proof}  The analogues of the completed~$K$-groups in these
contexts are the homotopy groups with coefficients in~$\Z_p$
of the homotopy fibres~$K(\OL,\OL_{\p};\Z_p)$ and $SK(\OL,\OL_{\p};\Z_p)$.
Let
$$\Ktw_{n}(\OL,\p):= \Ktw_{n}(\OL,\OL_{\p};\Z_p):=\pi_n(K(\OL,\OL_{\p});\Z_p)$$
$$\SKtw_{n}(\OL,\p):= \SKtw_{n}(\OL,\OL_{\p};\Z_p):=\pi_n(SK(\OL,\OL_{\p});\Z_p)$$
 for~$n > 0$. 
By Corollary~\ref{corr:replace},
the natural map $SK_n(\OL_F;\Z_p) \rightarrow SK_n(\OL_{\p};\Z_p)
= SK_n(\Z_p;\Z_p)$  is an isomorphism for all $n \ge 2$, and (because both vanish) the map
is  also an isomorphism for~$n = 1$.  Hence, from the
Serre exact sequence, the completed homotopy
 groups $\SKtw_n(\OL,\p)$ vanish for all $n$. The result follows by the Hurewicz theorem.
 \end{proof}
 
This lemma allows us to explicitly compute all the classical congruence homology groups
at powers of a very regular prime in the stable range.
 
 \begin{theorem}  \label{theorem:exp}   Suppose that $p$ is very regular in a imaginary quadratic field $F/\Q$, and fix an integer $n$.
Let $\p$ denote a prime above $p$,  let $\Gamma_N = \SL_N(\OL_F)$,
 let $G_N =  \SL_N(\OL_{\p}) = \SL_N(\Z_p)$. 
Then, for all sufficiently large $N$, there are, for all positive integers $m$,
 isomorphisms 
 $$
H_n(\Gamma_N(\p^m),\Z_p) \simeq H_n(G_N(p^m),\Z_p),$$
 $$H_n(\Gamma_N(\p^m),\F_p) \simeq H_n(G_N(p^m),\F_p) \simeq
\wedge^n M^0_N(\F_p),$$
where $M^0_N(\F_p)$ denotes the trace zero $N \times N$ matrices with coefficients in~$\F_p$.
Moreover, these isomorphisms respect the $G_N/G_N(\p^m)$ and $G_N/G_N(\p) = \SL_N(\F_p)$-module
structures respectively.
\end{theorem}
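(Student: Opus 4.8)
The plan is to realize $G(p^m)$ as the pro-$p$ congruence completion of $\Gamma(\p^m)$ and to show that, in the stable range, the natural map from the homology of the discrete group to the continuous homology of its completion is an isomorphism; the vanishing of the partially completed homology established in the preceding lemma is exactly what forces this. Since $p$ splits and $\OL_F/\p^{m}\simeq\Z/p^m$, the map $\SL_N(\OL_F)\hookrightarrow\SL_N(\Z_p)$ carries $\Gamma(\p^m)$ into $G(p^m)$ and identifies $\projlim_k \Gamma(\p^m)/\Gamma(\p^{m+k})$ with $G(p^m)=\projlim_k G(p^m)/G(p^{m+k})$. By Lazard~\cite{Lazard}, $G(p^m)$ is a uniform pro-$p$ group with $\Hc_n(G(p^m),\F_p)\simeq \wedge^n M^0_N(\F_p)$ as $\SL_N(\F_p)$-modules, and with $\Hc_n(G(p^m),\Z_p)\simeq \wedge^n M^0_N(\Z_p)$, which is torsion free; this accounts for the middle isomorphism, so it remains to compare $H_n(\Gamma(\p^m))$ with $\Hc_n(G(p^m))$.

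To carry this out I would run, for each $k$, the Lyndon--Hochschild--Serre spectral sequence of $1\to\Gamma(\p^{m+k})\to\Gamma(\p^m)\to P_k\to 1$, where $P_k:=\Gamma(\p^m)/\Gamma(\p^{m+k})=G(p^m)/G(p^{m+k})$:
\[
E(k)^2_{i,j}=H_i(P_k,H_j(\Gamma(\p^{m+k}),\F_p))\Rightarrow H_{i+j}(\Gamma(\p^m),\F_p).
\]
The inclusions $\Gamma(\p^{m+k+1})\hookrightarrow\Gamma(\p^{m+k})$ together with the surjections $P_{k+1}\twoheadrightarrow P_k$ constitute a morphism of group extensions fixing $\Gamma(\p^m)$, hence a morphism of spectral sequences inducing the identity on the (constant) abutment. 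The bottom-row edge maps $e_k\colon H_n(\Gamma(\p^m),\F_p)\to E(k)^2_{n,0}=H_n(P_k,\F_p)$ thus form an inverse system with $\projlim_k H_n(P_k,\F_p)=\Hc_n(G(p^m),\F_p)$, and $\projlim_k e_k$ is precisely the map induced by $\Gamma(\p^m)\to G(p^m)$. The kernel and cokernel of each $e_k$ are built from the terms $E(k)^\infty_{i,j}$ with $j>0$, which are subquotients of the finitely many groups $H_i(P_k,H_j(\Gamma(\p^{m+k}),\F_p))$ with $1\le j\le n$.

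The crux is that these error terms \emph{pro-vanish}. For $j>0$ the tower $\{H_j(\Gamma(\p^{m+k}),\F_p)\}_k$ has inverse limit $\Htw_j(\p,\F_p)$, which vanishes for $N$ sufficiently large by the preceding lemma together with the universal coefficient comparison (the statement $\Htw_j(\p,\Z_p)=0$ passes to $\F_p$-coefficients because the degree-zero part is torsion free). A tower of finite-dimensional $\F_p$-vector spaces with trivial inverse limit is pro-zero, and applying $H_i(P_k,-)$ to a pro-zero system of coefficient modules compatibly with the group maps $P_{k+1}\to P_k$ again yields a pro-zero tower; hence $\projlim_k\ker e_k=\projlim_k\coker e_k=0$, and the relevant $\projlim^1$ terms vanish by Mittag--Leffler since every group is finite dimensional in the stable range. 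Taking $\projlim_k$ of the exact sequences $0\to\ker e_k\to H_n(\Gamma(\p^m),\F_p)\to H_n(P_k,\F_p)\to\coker e_k\to 0$ then shows that $H_n(\Gamma(\p^m),\F_p)\to\Hc_n(G(p^m),\F_p)$ is an isomorphism; it is $\SL_N(\F_p)$-equivariant by construction, and a bijective equivariant map is an isomorphism of modules. Running the identical argument with $\Z/p^s$-coefficients and then passing to $\projlim_s$ (legitimate since all groups are finitely generated over $\Z_p$ in the stable range, so that $H_n(-,\Z_p)=\projlim_s H_n(-,\Z/p^s)$) yields the $\Z_p$-statement and respects the $\SL_N(\Z/p^m)$-action.

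The main obstacle I anticipate is the bookkeeping around the passage to the limit: one must verify that the edge-map errors are genuinely controlled by the $j>0$ part of the spectral sequence uniformly in $k$, that pro-vanishing survives the functors $H_i(P_k,-)$ even though the group $P_k$ itself varies, and that the Mittag--Leffler hypotheses hold so that $\projlim_k$ is exact on the sequences in play. A secondary point requiring care is the range of $N$: the preceding lemma supplies the vanishing of $\Htw_j(\p,\F_p)$ only in the stable range, and since the differentials affecting total degree $n$ involve fibre homology in degrees $j\le n$, the computation of $H_n$ is valid only once $N$ is large enough that stability holds in all degrees $\le n$ — which is the origin of the hypothesis ``for all sufficiently large $N$.''
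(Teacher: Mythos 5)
Your strategy is the paper's strategy: the paper's proof is a one-liner invoking the completed Hochschild--Serre spectral sequence $H_i(G(p^m),\Htw_j(\p,\F_p))\Rightarrow H_{i+j}(\Gamma(\p^m),\F_p)$, which degenerates because the preceding lemma kills $\Htw_j(\p,-)$ for $j>0$, leaving only the fibre-degree-zero column $H_i(G(p^m),\F_p)$. Your pro-zero analysis of the finite-level Lyndon--Hochschild--Serre sequences is a correct hands-on construction of exactly this degeneration, and the points you flag as worries are in fact fine: a tower of finite-dimensional $\F_p$-spaces with zero inverse limit is pro-zero (stable images form a surjective tower of finite sets), pro-vanishing survives the varying groups $P_k$ because the transition map factors as $H_i(P_{k'},M_{k'})\to H_i(P_{k'},M_k)\to H_i(P_k,M_k)$ and a zero coefficient map induces zero, and Mittag--Leffler holds since every term is finite(-dimensional). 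The passage from $\Z_p$-vanishing to $\F_p$- and $\Z/p^s$-vanishing via torsion-freeness in degree zero, and the uniformity in $m$ (the level-$m$ tower is a cofinal tail of the full tower), are also handled correctly.

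There is, however, one false assertion: $\Hc_n(G(p^m),\Z_p)$ is \emph{not} isomorphic to $\wedge^n M^0_N(\Z_p)$, and in particular is not torsion free. Lazard's exterior-algebra description of the cohomology of a uniform pro-$p$ group holds only with $\F_p$-coefficients; with $\Z_p$- or $\Q_p$-coefficients the continuous cohomology is the Lie algebra cohomology of $\mathfrak{sl}_N$, not the exterior algebra on $H^1(\cdot,\F_p)$ lifted to $\Z_p$. Concretely, $H^1_{\mathrm{cont}}(G(p^m),\Z_p)=\Hom_{\mathrm{cont}}(G(p^m),\Z_p)=0$ because $\mathfrak{sl}_N$ is perfect (the abelianization of $G(p^m)$ is the finite group $M^0_N(\Z/p^m)$), whereas $\wedge^1 M^0_N(\Z_p)$ is free of rank $N^2-1$; rationally one has $H^*_{\mathrm{cont}}(G(p^m),\Q_p)\simeq\Lambda_{\Q_p}[x_3,x_5,\ldots,x_{2N-1}]$, the same exterior algebra on primitive classes that appears for $H^*(G)$ elsewhere in the paper --- note the theorem itself deliberately asserts an exterior-algebra form only for the $\F_p$-statement. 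Fortunately the error is not load-bearing: the middle isomorphism $H_n(G(p^m),\F_p)\simeq\wedge^n M^0_N(\F_p)$ follows directly from Lazard's mod-$p$ theorem (which you also cite, and which is what the paper uses), and nothing in your comparison argument uses the purported torsion-freeness --- the final $\projlim_s$ step needs only $\Hc_n(G(p^m),\Z_p)=\projlim_s H_n(G(p^m),\Z/p^s)$, which is the definition of continuous homology in force. Delete the offending clause and the proof stands.
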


\begin{proof} This follows immediately from the Hochschild--Serre spectral sequence,
since $E^{2}_{ij} = 0$ unless $i = 0$, and hence the sequence degenerates immediately.
Note that the vanishing of $\Htw_*(\SL_N,\p,\Z_p)$ implies the vanishing of $\Htw_*(\SL_N,\p,\F_p)$.
\end{proof}

\begin{remark} \emph{Algebraic local systems of $\GL$ for an imaginary quadratic field
are algebraic representations of $\GL_N(\C)$ as a real group, in particular, they are
direct sums of representations of the form
$V_{\mu} \otimes \overline{V_{\lambda}}$, where $V_{\mu}$ and $V_{\lambda}$ are algebraic representations of $\GL_N$ of highest weight $\mu$ and $\lambda$ respectively, and where
the bar indicates the action of $\GL_N(\C)$ is composed with complex conjugation.
If $\MM$ corresponds to such a representation with $\lambda = 0$, then $\MM$ becomes locally
trivial up the $\p$-adic tower, and hence the coefficients can be pulled out as in Remark~\ref{remark:coefficients}. 
In particular, for such local systems, we have,
under the conditions of Theorem~\ref{theorem:exp},
isomorphisms 
$$H_n(\Gamma_N(\p^m),\MM) \simeq H_n(G_N(p^m),\MM \otimes_{\OL_F}  \Z_p).$$
}
\end{remark}

We note that (as it must) the algebra
$H^*(G_N,\Q_p)$  as $N \rightarrow \infty$ coincides with the stable cohomology
groups as computed by Borel.  The groups $H^*(G_N,\MM \otimes \Q_p)$  vanish
for non-trivial $\MM$ as can be seen by 
 considering the action of the infinitesimal character. This is consistent with the vanishing
 of 
$H^*(\Gamma_N,\MM \otimes \Q_p)$ in the stable range, as follows (essentially) from
a similar computation.

\medskip

We  observe that very regular primes do exist. 
For any particular fixed field $F$, we can give a relatively fast and explicit algorithm
for computing regular primes using generalized Bernoulli numbers. We give
some details in the case~$F = \Q(\sqrt{-1})$. Recall that the Bernoulli numbers $B_{2n}$
and Euler numbers $E_{2n}$ are defined by the following Taylor series:
$$\frac{t}{e^{t}-1} = \sum \frac{B_n}{n!}  \cdot t^n, \qquad
\frac{2}{e^t + e^{-t}} = \sum \frac{E_n}{n!} \cdot t^n.$$

\begin{lemma} \label{lemma:crit3}
If $F = \Q(\sqrt{-1})$, then $p \equiv 1 \mod 4$ is very regular if and only if
the following conditions are satisfied:
\begin{enumerate}
\item
 The prime $p$ divides neither $B_{2n}$ nor  $E_{2n}$ for $2n$ less than $p-1$.
\item If $p = a^2 + b^2$, then $(4ab)^{p-1} \not\equiv 1 \mod p^2$.
\end{enumerate}
\end{lemma}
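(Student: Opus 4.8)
The plan is to specialize Lemma~\ref{lemma:crit2} to $F = \Q(\sqrt{-1})$ and to translate its three hypotheses into the two numerical assertions of the statement. Here $\OL_F = \Z[i]$ has class number $h = 1$ and $w_F = 4$, which is prime to $p$ since $p > 2$; the associated character $\chi$ is the odd quadratic character of conductor $4$; and the congruence $p \equiv 1 \bmod 4$ is exactly the condition that $p$ split. With these identifications, conditions (1) and (2) of Lemma~\ref{lemma:crit2} will merge into assertion (1), and condition (3) will become assertion (2).

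First I would handle assertion (1). Condition (1) of Lemma~\ref{lemma:crit2} asks that $p \nmid \zeta(1-2k)$ for $1 \le k \le (p-3)/2$; since $\zeta(1-2k) = -B_{2k}/(2k)$ and each such $B_{2k}$ is $p$-integral by von Staudt--Clausen, this is precisely Kummer's criterion $p \nmid B_{2n}$ for all even $2n < p-1$. For the Euler-number half, condition (2) of Lemma~\ref{lemma:crit2} is $p \nmid L(\chi,-2k)$ for $1 \le k \le (p-3)/2$, and I would evaluate these via the generating function of the generalized Bernoulli numbers: a short computation collapses $\sum_{a}\chi(a)\,t e^{at}/(e^{4t}-1)$ to $-t/(2\cosh t)$, so that $B_{2k+1,\chi} = -\tfrac12(2k+1)E_{2k}$ and hence $L(\chi,-2k) = -B_{2k+1,\chi}/(2k+1) = E_{2k}/2$. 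As $p$ is odd this gives $p \nmid L(\chi,-2k) \iff p \nmid E_{2k}$, and letting $k$ run over $1,\dots,(p-3)/2$ yields $p \nmid E_{2n}$ for all even $2n < p-1$. Together these identify conditions (1)--(2) of Lemma~\ref{lemma:crit2} with assertion (1).

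The substance lies in condition (3). Writing $\p = (a+bi)$, $\pbar = (a-bi)$ with $p = a^2+b^2$, and using $h=1$ together with the remark following Lemma~\ref{lemma:crit}, condition (3) states that $\alpha := a+bi$ satisfies $\alpha^{p-1} \not\equiv 1 \bmod \pbar^2$. Since $\pbar$ has residue degree one and $p$ splits, reduction induces a ring isomorphism $\OL_F/\pbar^2 \simeq \Z/p^2$, under which $\alpha$ maps to the unit root $\equiv 2a$ of its minimal polynomial $x^2 - 2ax + p$; solving modulo $p^2$ gives $\alpha \equiv 2a - p(2a)^{-1}$. Then I would expand $\alpha^{p-1} \equiv (2a)^{p-1}\bigl(1 - p(2a)^{-2}\bigr)^{p-1} \equiv 1 + p\,[\,q_p(2a) + (2a)^{-2}\,] \bmod p^2$, where $q_p(x) = (x^{p-1}-1)/p$ is the Fermat quotient. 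Replacing the generator by the unit multiple $i(a+bi) = -b+ai$ (and using $i^{p-1}\equiv 1$, valid as $4 \mid p-1$) leaves $\alpha^{p-1}$ unchanged while interchanging the roles of $a$ and $b$, so $\alpha^{p-1} \equiv 1 + p\,[\,q_p(2b) + (2b)^{-2}\,]$ as well. Averaging the two expressions and using the cancellation
\[
(2a)^{-2} + (2b)^{-2} = \frac{a^2+b^2}{4a^2b^2} = \frac{p}{4a^2b^2} \equiv 0 \bmod p, \qquad q_p(2a) + q_p(2b) \equiv q_p(4ab) \bmod p,
\]
gives $\alpha^{p-1} \equiv 1 + \tfrac{p}{2}\,q_p(4ab) \bmod p^2$. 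Hence the projection is nontrivial if and only if $q_p(4ab) \not\equiv 0$, i.e. $(4ab)^{p-1} \not\equiv 1 \bmod p^2$, which is assertion (2).

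I expect the third step to be the main obstacle: one must set up the isomorphism $\OL_F/\pbar^2 \simeq \Z/p^2$ together with the compatible root of $-1$, carry out the Fermat-quotient expansion cleanly, and recognize the cancellation afforded by $a^2+b^2 = p$, which is precisely what converts the one-sided expression into the symmetric quantity $4ab$. The evaluation $L(\chi,-2k) = E_{2k}/2$ in the second step is classical but still relies on the generating-function identity above.
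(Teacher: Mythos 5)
Your proposal is correct and follows essentially the same route as the paper: specialize Lemma~\ref{lemma:crit2} to $F = \Q(\sqrt{-1})$, identify the $\zeta$- and $L$-values with Bernoulli and Euler numbers via $\zeta(1-2n) = -B_{2n}/2n$ and $L(\chi_4,-2n) = E_{2n}/2$, and reduce condition~(3) to $\pi^{p-1} \not\equiv 1 \bmod \pibar^2$ for $\pi = a+bi$. The paper merely asserts the equivalence of this last congruence with $(4ab)^{p-1} \not\equiv 1 \bmod p^2$, whereas your Fermat-quotient expansion of the unit root of $x^2-2ax+p$ modulo $p^2$, symmetrized via the unit multiple $i\pi$ and the cancellation $(2a)^{-2}+(2b)^{-2} = p/4a^2b^2 \equiv 0 \bmod p$, correctly supplies the computation the paper leaves implicit.
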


\begin{proof} The first condition follows from the formulae
$$\zeta(1-2n) = - \frac{B_{2n}}{2n}, \qquad 
L(\chi_4,-2n) =  \frac{E_{2n}}{2},$$
the second is equivalent to the condition  $\alpha^{p-1} \not\equiv 1 \mod \alphabar^2$
where $\alpha = a + b i$.
\end{proof}

The 
non-regular primes $p$ 
less than $100$
that split in $\Q(\sqrt{-1})$ are
$p = 37$ (which divides $B_{32}$) and $p = 61$, which  divides:
$$\frac{-\pi^7}{6! \cdot 2^7} \cdot L(\chi_4,-6) = 
L(\chi_4,7) =  1 - \frac{1}{3^7} + \frac{1}{5^7} - \frac{1}{7^7} + \ldots
= 61 \cdot \frac{\pi^7}{6! \cdot 2^6}.$$
Using Lemmas~\ref{lemma:crit},~\ref{lemma:crit2}
and~\ref{lemma:crit3}, we
compute the following examples for $p=3$ and $|d_F| < 200$ as well as 
primes less than $100$ for the ten smallest imaginary quadratic fields.

\begin{example}  \emph{
 If  $F = \Q(\sqrt{-|d_F|})$, then $p = 3$ is very regular for a fundamental
discriminant $|d_F| < 200$ if  and only if $-d_F$ is one of the following integers:
$$8, 11,  20, 23, 59,  68, 71, 83, 95, 104, 116, 119, 131, 143, 152, 155, 167, 179, 191.$$
}
\end{example}

\begin{remark} \emph{Following~\cite{CL}, for an imaginary quadratic field~$F$, one may 
consider a heuristic model of the
$p$-class group  $\Cl(\OL_F) \otimes \Z_p$ as
 the quotient of a free $\Z_p$-module with $N$ generators by $N$ relations, 
where $N$ goes to infinity. Correspondingly,  one might model the ray class
group of conductor $(\pbar)^{\infty}$ for a $\p$ which is completely
split  by allowing an extra generator for ramification at $\pbar$, and
imposing a further relation by demanding local triviality at $\p$ (via the image of
the Artin map). In this case, one would predict that the probability
that $p$ fails to satisfy the given condition is exactly the same as the probability
that the $p$-part of the class group is trivial, namely:
$$\prod_{n=0}^{\infty} \left(1 - \frac{1}{p^n}\right).$$
For example, if $p = 3$, then  $p$ is very regular if and only if
it satisfies the final condition of Lemma~\ref{lemma:crit2},
 and this calculation suggests
that the density of such quadratic fields (amongst those
in which $3$ splits) is 
$$\prod_{n=0}^{\infty} \left(1 - \frac{1}{3^n}\right) \sim 0.560126\ldots$$
For such fields of fundamental discriminant $|d_F| <$ 1,000, 10,000, 
100,000, and 300,000, the corresponding percentage of fields in which
$3$ is very regular is 68.1\%, 64.6\%,  61.2\%, and 59.8\% respectively.
(This is consistent with the slow monotone rate of convergence
often observed in Cohen--Lenstra phenomena, and also with the
fact that the apparent error term is positive.) There   are natural maps:
$$\RCl((\pbar)^{\infty})/\p \otimes \Z_p \rightarrow \Cl(\OL_F)/\p \otimes \Z_p
\leftarrow \Cl(\OL_F) \otimes \Z_p.$$
From our discussion, the two outer groups should have a similar distribution. The
group in the middle, however, having one further relation, should behave like the $p$-part of
the class group of a \emph{real} quadratic field (this exact remark is made in
the last paragraph of~\S8 of~\cite{CL}).}
\end{remark}

\begin{example} 
For the $10$ smallest imaginary quadratic fields, we have the following table:
\begin{center}
\begin{tabular}{|c||c|c|c|c|c|c|c|c|c|c|}
\hline
$-d_F$ & $3$ & $4$ & $7$ & $8$ & $11$ & $15$ & $19$ & $20$ & $23$ & $24$ \\
\hline
$3$ &   & & & \checkmark & \checkmark  &  &  & \checkmark & \checkmark &  \\
$5$ &   & \checkmark &   &  & \ding{55} &   &  \checkmark &  &  &  \checkmark \\
$7$ & \checkmark &  &  &  &  &  & \checkmark & \checkmark &  & \checkmark \\
$11$ &  &  & \checkmark & \checkmark &  &  & $2$, \ding{55} &  &  & \checkmark \\
$13$ & \ding{55} & \checkmark &  &  &  &  &  &  &  \checkmark &  \\
$17$ &  & \checkmark &  & \checkmark &  & $14$ & \checkmark &  &  & \\
$19$ & \checkmark &   &  & $4$ &  & \checkmark &  & & & \\
$23$ &  &  &  \checkmark &  &  \checkmark & \checkmark & \checkmark & \checkmark & \checkmark & \\
$29$ &  &  \checkmark & $18$ &  &  &  &  &  $16$ & \checkmark & \checkmark \\
$31$ & \checkmark &  &   &  &  \checkmark & $4$ &  &  &  \checkmark & \checkmark \\
$37$ & $31$ & $31$ & $31$ & & $31$ &  & & & & \\
$41$ &  &  \checkmark &  &  \checkmark &   &  &  & $24$ & $8$  &  \\
$43$ & \checkmark & & \checkmark & \checkmark & & & \checkmark & \checkmark & & \\
$47$ & & & & & \checkmark & \checkmark & \checkmark & $24$ & \checkmark & \\
$53$ & &  \checkmark & $18,42$ &  & \checkmark & \checkmark &  &  & & \checkmark \\
$59$ &  & & & $18,43$ & $43$ & & & & $43$ & $43,52$ \\
$61$ &  \checkmark & $6$ &  & & & $54$ & $56$ & $42$ & & \\
$67$ &  $46,57$ &   & $57$ &   $57$ &   $57$ &   & & $57$ &   & \\
$71$ &  & & \checkmark &   & $6$ &   & & & $4$ &   \\
$73$ & \checkmark &  \checkmark &   & $30$ &   & & $58$ &   & \checkmark & $28$ \\
$79$ &  \checkmark &   & \checkmark &   & & \checkmark &   & & & \checkmark \\
$83$ &  & & &$14$ &   & \checkmark &   \checkmark &   $8, 24$ &   & \checkmark \\
$89$ & & \checkmark &   &   $32$ &  \checkmark &   & & \checkmark &   & \\
$97$ & \checkmark &   \checkmark &   & \checkmark &   \checkmark &   & & & & \checkmark \\
\hline
\end{tabular}
\end{center}
\noindent Here a tick indicates that $p$ is very regular for $F$,
a blank square indicates that $p$ is either inert or ramified in $F$, an even integer $2n$ means
that $p$ fails to be regular because $p$ divides $L(\chi,-2n)$, an odd
integer $2n-1$ means that $p$ divides $\zeta(1-2n)$, and the crosses indicate the failure of condition~$3$ of 
Lemma~\ref{lemma:crit2}, explicitly, $\alpha^{p-1} \equiv 1 \mod \alphabar^2$, where
$$\alpha = \frac{7 + \sqrt{-3}}{2}, \qquad \frac{3 + \sqrt{-11}}{2}, \qquad  \frac{5 + \sqrt{-19}}{2}$$
respectively --- note that $h_F = 1$ in each case.
\end{example}

It is interesting to compare the case $F = \Q(\sqrt{-2})$ and $3 = \p \pbar$ to
computations of \emph{non-stable} 
completed cohomology groups 
in the  work of the author  and Dunfield~\cite{CD} (see also~\cite{BE}).
In particular, for $N = 2$, one may show in this case
that  $\Htw_n(\p,\Z_p) = 0$ for all $n > 0$. Moreover, in the context 
of~\cite{CEtowers}, it is theoretically possible that $\Htw_n(\p,\Z_p) = 0$
for very regular primes for the same~$F$ and~$\p$  for other values of $N$, since one has the
necessary ``numerical coincidence'' between dimensions of locally symmetric spaces
and $p$-adic lie groups:
$$\dim \SL_N(\OL_F) \backslash \SL_N(\C) \slash \SU_N(\C)
=  N^2 - 1 = \dim \SL_N(\Z_p).$$
However, under the assumption that
for sufficiently large $N$ there will exist weight $0$ regular cuspidal automorphic
representations of level $1$ for $F = \Q$, one would anticipate 
the existence of non-stable characteristic zero classes,  
which would imply that $\Htw_*(\p,\Z_p)$ can
only  vanish over
the entire non-stable range only for finitely many $N$ (for all $F$ and $\p$).

\begin{remark}  \emph{
Natural heuristics suggest that for any imaginary quadratic field $F$, there
are infinitely many very regular primes (possibly with density $e^{-1} =  0.367879\ldots$). 
This seems hard to prove, however, since being very
regular  implies that $p$ is regular in $\Q$, and the infinitude of regular primes is a well
known open question. 
}
\end{remark}

\begin{remark} \emph{There is  a natural generalization of very regular to a prime $p$
that splits completely in a totally imaginary CM field $F/F^{+}$. Here one replaces
a single place $v|p$ by a collection $T$ of $[F^{+}:\Q]$ places $v|p$,
and asks that the map:
$$\prod_{[F^{+}:\Q]} \Gal(\Qbar_p/\Q_p) \rightarrow \prod_{v|T} D_v \subseteq \Gamma$$
is surjective, where $\Gamma$ is the Galois group of the pro-$p$ extension of
$F(\zeta_p)$ unramified outside~$p$.
}
\end{remark}

\begin{remark} \emph{In contrast to the case of regular primes, it seems
difficult to control  the module $\Htw_d(\SL,\p,\Z_p)$ for a prime $p$
that splits in $F$, even for $d = 2$. An elementary computation
 shows that $\Htw_2(\SL,\p,\Z_p)$ is infinite
if and only if $L_p(\chi,2) \ne 0$. (Wagoner computed that
the associated regulator map was non-zero for~$F = \Q(\sqrt{-3})$ and primes~$\p$
of prime norm $\le 73$ in~\cite{Wagonerc}.) As with $\zeta_p(3) \in  \Q_p$,
it seems difficult to prove that $L_p(\chi,2) \ne 0$ in general, even for a fixed $p$ (and varying
 imaginary quadratic character $\chi$).
    }
    \end{remark}

\bibliographystyle{amsalpha}
\bibliography{CalegariStable}

\end{document}